\newtheorem{thm}{Theorem}
\newtheorem{lem}{Lemma}
\newtheorem{cor}{Corollary}
\newtheorem{prop}{Proposition}
\begin{document}



\title{Kolmogorov type inequalities for the Marchaud fractional derivatives on the real line and the half-line}
\author{V.\,F.~Babenko, M.\,S.~Churilova, \\N.\,V.~Parfinovych, D.\,S.~Skorokhodov}
\date{Department of Mechanics and Mathematics, \\Dnepropetrovsk National University}
\maketitle



\begin{abstract}
In this paper we establish some new Kolmogorov type inequalities for the Marchaud and Hadamard fractional derivatives of functions defined on a real axis or semi-axis. Simultaneously we solve two related problems: the Stechkin problem on the best approximation of unbounded operators by bounded ones on a given class of elements and the problem of optimal recovery of operator on elements from some class given with prescribed error.

Keywords: inequalities for derivatives, fractional derivatives, approximation of unbounded operators by bounded ones, optimal recovery of operators, ideal lattice.

\end{abstract}





\section{Introduction}
Inequalities estimating the norm of intermediate derivative of a function in terms of the norm of the function itself and the norm of its higher order derivative (inequalities of Kolmogorov type) are important in many areas of Mathematics and its applications. Due to efforts of many mathematicians, nowadays, great number of sharp Kolmogorov type inequalities is known (see, for instance, surveys~\cite{ArGab,Arestov_96,Kolm_85} and monographs~\cite{BKKP-kn,Kwong,Mitr}). In many questions of Analysis and its Applications the study of fractional order derivatives is also important (see, for instance,~\cite{Sam}). For some known results on the Kolmogorov type inequalities for derivatives of fractional order we refer the reader to the papers~\cite{geisb,Ge68,arestov,Mag_Tih_81,bch,Bab_Chur_07,Bab_Pich_10,Bab_Parf_Pich_14}, book~\cite[Ch.~2]{MBDK} and references therein.

In this paper we shall obtain some new Kolmogorov type inequalities for fractional derivatives. Simultaneously, we consider two closely related problems: the Stechkin problem on approximation of unbounded operators by bounded ones on a given class of elements $Q$, and the problem of optimal recovery of unbounded operator on the class $Q$ under assumption that elements in $Q$ are given with known error (for more information see~\cite{ArGab,Arestov_96} and~\cite[\S 7.1]{BKKP-kn}).

\subsection{The Kolmogorov type inequalities}
Let $G$ be the real line $\mathbb{R} = (-\infty, +\infty)$ or half-line $\mathbb{R}_+ = [0,+\infty)$. By $L_{p}\left( G \right)$, $1 \leqslant p \leqslant \infty$, we denote the space of measurable functions $f:G\to\mathbb{R}$ whose modulus to the $p$-th power is integrable on $G$ (essentially bounded on $G$ if $p=\infty$), endowed with the standard norm
\[
	\|f\|_{L_{p}\left( G \right)}:= \left\{ \begin{array}{ll}
		\displaystyle \left(\int_G\left|f(t)\right|^{p}\,dt \right)^{1/p}, & \textrm{if} \;1\leqslant p < \infty,\\[10pt]
		\textrm{ess\,sup}\left\{|f(t)|\;:\;t\in G\right\},&\textrm{if}\; p=\infty.
	\end{array}\right.
\]
For $r\in\mathbb{N}$ and $1\leqslant s\leqslant \infty$, by $L_{p,s}^r(G)$ we denote the space of functions $f\in L_{p}(G)$ having locally absolutely continuous on $G$ derivative $f^{(r-1)}$ and such that $f^{(r)}\in L_s(G)$.

Let $1\leqslant q\leqslant \infty$, $k\in\mathbb{N}\cup\{0\}$, $0\leqslant k\leqslant r-1$, and $\lambda,\mu\in\mathbb{R}$. Inequalities of the form
\begin{equation}
\label{multiplicative_inequality}
	\left\|f^{(k)}\right\|_{L_{q}(G)} \leqslant K \left\|f\right\|_{L_{p}(G)}^{\mu} \left\|f^{(r)}\right\|_{L_{s}(G)}^{\lambda}
\end{equation}
holding true for every function $f\in L_{p,s}^{r}(G)$ with some constant $K$ independent of $f$ are called {\it the Kolmogorov type inequalities} ({\it the Kolmogorov-Nagy type inequalities} when $k=0$). It is known (see~\cite{Gab_67}) that the constant $K$ in inequality~(\ref{multiplicative_inequality}) is finite if and only if
\begin{equation}
\label{Gabushin_conditions1}
	\lambda = \frac{k-1/q + 1/p}{r- 1/s + 1/p},\qquad\mu = 1-\lambda
\end{equation}
and
\[
	\frac{r}{q} \leqslant \frac{r-k}{p} + \frac{k}{s}.
\]
Naturally, inequalities with the lowest possible (sharp) constant $K$ are of the most interest. We refer the reader to the papers~\cite{ArGab,Arestov_96} and books~\cite{BKKP-kn,Mitr} for the detailed survey on the Kolmogorov type inequalities and discussion of related questions.

Together with inequalities~(\ref{multiplicative_inequality}) the study of inequalities between the norms of intermediate function derivative, the function itself and its higher order derivative in spaces more general than $L_p$ are also important. In Sections~\ref{Sec:aux}--\ref{Sec:Low_smoothness} we shall obtain several inequalities between the norms of derivatives in ideal lattices (see~\cite[Ch.~2, \S2]{Krein}).

In this paper we focus on the study of the Kolmogorov type inequalities for non-integer (fractional) values of $k$. There are many ways to give a sense to the fractional derivative of a function defined on $\mathbb{R}$ or $\mathbb{R}_+$. Among the first ones was the fractional derivative in {\it the Riemann-Liouville} sense (see~\cite[\S5.1]{Sam}) that is defined for a function $f:\mathbb{R}\to\mathbb{R}$ and $x\in\mathbb{R}$, as follows
\begin{equation}
\label{Riemann-Liouville}
	\mathcal{D}^{k}_{\pm}f(x) := \frac{(\pm 1)^{n}}{\Gamma(n-k)}\cdot\frac{d^{n}}{dx^{n}}\int_0^{+\infty} t^{n-k-1} f(x\mp t)\,dt,\qquad n=[k]+1,
\end{equation}
where $\Gamma(z)$ is the Euler gamma function and $[z]$ stands for the integer part of real number $z$.
We shall mostly consider fractional derivatives in {\it the Marchaud} sense (see~\cite{march} or~\cite[\S5.6]{Sam}) that are defined for a function $f:\mathbb{R}\to\mathbb{R}$ and $x\in\mathbb{R}$, as follows
\begin{equation}
\label{Marchaud_derivative}
	D_{\pm}^{k}f(x) = \frac{1}{\varkappa(k,n)} \int_{0}^{+\infty} \frac{\left(\Delta^n_{\pm t}f\right)(x)}{t^{1+k}}\,dt
\end{equation}
where $n\in\mathbb{N}$, $n>k$, (the definition itself is independent of $n$) and
\begin{equation}
\label{difference}	
	\begin{array}{rcl}
		\displaystyle\left(\Delta^n_{\pm t}f\right)(x) & := & \displaystyle\sum\limits_{m=0}^{n} (-1)^m\left(n\atop m\right) f\left(x \mp mt\right), \\
		\displaystyle\varkappa(k,n) & := & \displaystyle\Gamma(-k) \sum\limits_{m=0}^{n}(-1)^{m}\left(n\atop m \right) m^{k}.
	\end{array}
\end{equation}
For a function $f:\mathbb{R}_+\to\mathbb{R}$, the right hand sided derivatives $\mathcal{D}^{k}_{-}f$ and $D^{k}_{-}f$ are defined by formulas~(\ref{Riemann-Liouville}) and~(\ref{Marchaud_derivative}) respectively. The left hand sided derivatives $\mathcal{D}^{k}_{+}f$ and $D^{k}_{+}f$ are defined with the help of slightly different constructions (see~\cite[\S\S5.1, 5.5]{Sam}), and we shall not study these derivatives here.

It is known (see~\cite{Sam}) that $\mathcal{D}^k_{\pm}f = D^k_{\pm}f$ for ``good'' functions $f:G\to\mathbb{R}$. However, construction~(\ref{Marchaud_derivative}) is also suitable for a wider class of functions, e.g. constant functions or functions whose power growth at infinity has order lower than $k$.

Let us consider Kolmogorov type inequalities of the form~(\ref{multiplicative_inequality}) with the term $\left\|f^{(k)}\right\|_{L_q(G)}$ being replaced by $\left\|D^{k}_-f\right\|_{L_q(G)}$ in the left hand side:
\begin{equation}
\label{multiplicative_inequality_fractional}
	\left\|D^k_-f\right\|_{L_{q}(G)} \leqslant K \left\|f\right\|_{L_{p}(G)}^{\mu} \left\|f^{(r)}\right\|_{L_{s}(G)}^{\lambda}, \qquad f\in L^r_{p,s}(G).
\end{equation}
Similarly to inequalities~(\ref{multiplicative_inequality}) for derivatives of integer order, it is easy to see that the constant $K$ in~(\ref{multiplicative_inequality_fractional}) is finite only if parameters $\lambda$ and $\mu$ satisfy equalities~(\ref{Gabushin_conditions1}). 

Together with the Riemann-Liouville and the Marchaud fractional derivatives, Kolmogorov type inequalities were also studied (\cite{Bab_Parf_12,Bab_Parf_Pich_14,bustih}) for other fractional derivatives, e.g. the Riesz fractional derivative, the Weyl fractional derivative, etc. To the best of our knowledge, sharp constant in inequality~(\ref{multiplicative_inequality_fractional}) was found in the following situations:
\begin{enumerate}
\item $G=\mathbb{R}$, $p=q=s=\infty$, $r=2$ and $k\in(0,1)$, -- S.\,P.~Geisberg~\cite{geisb};
\item $G = \mathbb{R}_+$, $p=q=s=\infty$, $r=2$ and $k\in(0,2)\setminus\{1\}$, -- V.\,V.~Arestov~\cite{arestov} (for $\mathcal{D}^{k}_{\pm}$);
\item $G = \mathbb{R}_+$, $p=q=s=\infty$, $0<k\leqslant 1$ and $k<r\leqslant 2$, -- V.\,V.~Arestov~\cite{arestov} (for $\mathcal{D}^{k}_{\pm}$);
\item $G = \mathbb{R}$ or $G = \mathbb{R}_+$, $p=s=2$, $q=\infty$, $k<r$, -- A.\,P.~Buslaev and V.\,M.~Tihomirov~\cite{bustih} (for the Weyl derivative);
\item $G = \mathbb{R}$ or $G = \mathbb{R}_+$, $p=q=\infty$, $1\leqslant s\leqslant \infty$, $r=1$, $k\in\left(0,1-1/s\right)$, -- V.\,F.~Babenko and M.\,S.~Churilova~\cite{bch_3};
\item $G = \mathbb{R}$, $p=q=\infty$, $1\leqslant s\leqslant \infty$, $r=1,2$, $k\in\left(0,r-1/s\right)\setminus\{1\}$, -- V.\,F.~Babenko, N.\,V.~Parfinovych~\cite{Bab_Parf_12} and V.\,F.~Babenko, N.\,V.~Parfinovych, S.\,A.~Pichugov~\cite{Bab_Parf_Pich_14} (for the Riesz derivative).
\end{enumerate}

Here we establish sufficient conditions which allow writing sharp Kolmogorov type inequalities. Specifically, we focus on inequalities between the uniform norms of the function and its derivatives; the uniform norms of the function and its intermediate derivative and the norm of its higher order derivative in the ideal lattice; the norms of the function and its derivatives in the ideal lattice. As a consequence in Section~\ref{Sec:Low_smoothness} we obtain several new sharp Kolmogorov type inequalities in the following cases:
\begin{enumerate}
\item $G = \mathbb{R}$ or $G = \mathbb{R}_+$, $p=q=\infty$, $r=1$ and $k\in(0,1)$, the norm of $f'$ is considered in an ideal lattice;
\item $G = \mathbb{R}$, $p=q=s=1$, $r=1$ and $k\in(0,1)$;
\item $G = \mathbb{R}_+$, $p=q=\infty$, $r=2$ and $k\in(0,1)$, the norm of $f''$ is considered in an ideal lattice;
\item $G = \mathbb{R}$, $p=q=\infty$, $1\leqslant s\leqslant \infty$, $r=2$ and $k\in(0,1)$;
\item $G = \mathbb{R}$ or $G = \mathbb{R}_+$, $p=q=\infty$, $1\leqslant s\leqslant \infty$, $r=2$ and $k\in\left(1,2-1/s\right)$.
\end{enumerate}

\subsection{The Stechkin problem}

The problem of the best approximation of unbounded operators by linear bounded ones is close to the problem of finding sharp constants in inequalities~(\ref{multiplicative_inequality}),~(\ref{multiplicative_inequality_fractional}) and, furthermore,  presents an independent interest. We follow~\cite{Ste4} (see also surveys~\cite{ArGab,Arestov_96}) to set the problem rigorously.

Let $X$ and $Y$ be the Banach spaces; $A:X\to Y$ be an operator (not necessarily linear) with domain of definition $D_A\subset X$; $Q\subset D_A$ be some set. The function
\begin{equation}
\label{modulus_of_continuity}
	\Omega(\delta) = \Omega(\delta,A,Q) := \sup\left\{\|Af\|_Y\;:\;f\in Q,\; \|f\|_X\leqslant \delta\right\},\qquad \delta\geqslant 0,
\end{equation}
is called {\it the modulus of continuity} of the operator $A$ on the set $Q$.

By $\mathcal{L} = \mathcal{L}(X,Y)$ we denote the space of all linear bounded operators $S:X\to Y$. {\it The error of approximation} of operator $A$ by linear bounded operator $S\in\mathcal{L}$ on set $Q$ is defined by
\[
	U(A,S;Q) := \sup\limits_{x\in Q} \|Ax - Sx\|_Y.
\]
For $N>0$, we set
\begin{equation}
\label{Stechkin_problem}
	E_N(A;Q) := \inf\limits_{S\in \mathcal{L},\;\|S\|\leqslant N} U(A,S;Q).
\end{equation}
The Stechkin problem on the best approximation of operator $A$ by linear bounded operators on set $Q$ consists in evaluating quantity~(\ref{Stechkin_problem}) and finding extremal operators (if any exists) delivering infimum in the right hand part of~(\ref{Stechkin_problem}).

Now, we let
\[
	\ell(\delta,A;Q) := \inf\limits_{N\geqslant 0}\left(E_N(A;Q) + N\delta\right).
\]
The following theorem by S.\,B.~Stechkin~\cite{Ste4} (see also~\cite{BKKP-kn},~\cite{Arestov_96}) provides simple but nevertheless effective lower estimate of quantity~(\ref{Stechkin_problem}) in terms of the modulus of continuity $\Omega$.

{\bf Theorem~A. \it If $A$ is a homogeneous (in particular, linear) operator, $Q\subset D_{A}$ is centrally-symmetric convex set, then for every $N\geqslant 0$ and $\delta\geqslant 0$,
\begin{equation}
\label{Stechkin_estimate}
	E_N(A;Q) \geqslant \sup\limits_{\delta\geqslant 0}\left\{\Omega(\delta,A;Q) - N\delta\right\} = \sup\limits_{x\in Q}\left\{\|Ax\|_Y - N\|x\|_X\right\},
\end{equation}
\[
	\Omega(\delta,A;Q) \leqslant \ell(\delta,A;Q).
\]
Furthermore, if there exists a pair of element $x_0\in Q$ and operator $S_0\in\mathcal{L}$ such that
\begin{equation}
\label{extremal_condition}
	\left\|Ax_0\right\|_Y = U\left(A,S_0;Q\right) + \left\|S_0\right\|\cdot\left\|x_0\right\|_X
\end{equation}
then $\Omega\left(\left\|x_0\right\|_{X},A;Q\right) = \left\|Ax_0\right\|_Y$ and
\[
	E_{\left\|S_0\right\|}\left(A;Q\right) = U\left(A,S_0;Q\right) = \left\|Ax_0\right\|_Y - \left\|S_0\right\|\cdot\left\|x_0\right\|_X.
\]
Consequently, operator $S_0$ is extremal in problem~(\ref{Stechkin_problem}) for $N = \left\|S_0\right\|$, and element $x_0$ -- in problem~(\ref{modulus_of_continuity}) for $\delta = \|x_0\|_X$. }

We refer the reader to the papers~\cite{ArGab,Arestov_96} for the survey of other known results on the Stechkin problem and discussion of related questions.

In particular case when $G = \mathbb{R}$ or $G = \mathbb{R}_+$, $X=L_p(G)$, $Y=L_{\infty}(G)$, $A = D^k_{-}$ and
\[
	Q = W_{p,s}^r(G) := \left\{f\in L_{p,s}^r(G)\;:\;\left\|f^{(r)}\right\|_{L_s(G)}\leqslant 1\right\},
\]
for every $\delta \geqslant 0$, we have
\[
	\Omega\left(\delta, D^k_{-};W^r_{p,s}(G)\right) = K \cdot\delta^{1-\lambda},\qquad \lambda = \frac{k + 1/p}{r-1/s + 1/p},
\]
where $K = \Omega\left(1, D^k_{-};W^r_{p,s}(G)\right)$ is the sharp constant in inequality~(\ref{multiplicative_inequality_fractional}) with $q=\infty$. From the result of V.\,N.~Gabushin~\cite[Lemma~1]{Gab_70} it can be easily shown that estimate~(\ref{Stechkin_estimate}) is sharp when $Y = L_\infty(G)$. Hence, for every $N\geqslant 0$,
\[
	E_N\left(D^{k}_{-};W^r_{p,s}(G)\right) =  \sup\limits_{\delta\geqslant 0}\left\{K\delta^{1-\lambda} - N\delta\right\} = \lambda (1-\lambda)^{\frac{1}{\lambda} - 1}K^{\frac{1}{\lambda}}N^{1-\frac{1}{\lambda}}.
\]
Therefore, in all cases when the sharp constant $K$ in inequality~(\ref{multiplicative_inequality_fractional}) for $q=\infty$ is found, we immediately know the exact value of the quantity of the best approximation of operator $D^{k}_{-}$ by linear bounded operators on the class $W^r_{p,s}(G)$. 

\subsection{The problem of optimal recovery of operators on elements given with an error}

Another problem that is closely related to the Stechkin problem and sharp Kolmogorov type inequalities is the problem of optimal recovery of an operator with the help of the set of linear operators (or mappings in general) on elements of some set that are given with an error. We follow paper~\cite{Arestov_96} to set the problem rigorously.

Let $X$ and $Y$ be the Banach spaces; $A:X\to Y$ be an operator (not necessarily linear) with domain of definition $D_A\subset Q$; $Q\subset D_A$ be some set. By $\mathscr{R}$ we denote either the set $\mathscr{L}$ of all linear operators acting from $X$ to $Y$, or the set of all mappings $\mathscr{O}$ from $X$ to $Y$. For an arbitrary $\delta\geqslant 0$ and $S\in\mathscr{R}$, we set
\[
	U_{\delta}\left(A,S;Q\right) := \sup\left\{\left\|Af - Sf\right\|_{Y}\;:\;f\in Q,\;g\in X,\;\|f-g\|_{X}\leqslant \delta\right\}.
\]
It is clear that $U_{0}\left(A,S;Q\right) = U\left(A,S;Q\right)$. The problem of optimal recovery of the operator $A$ with the help of the set of operators $\mathscr{R}$ on elements of the set $Q$ with given error $\delta$ consists in finding the quantity
\[
	\mathcal{E}_{\delta}\left(\mathscr{R},A;Q\right) := \inf\limits_{S\in\mathscr{R}} U_{\delta}\left(A,S;Q\right),
\]
called the best recovery of operator $A$ with the help of mappings from $\mathscr{R}$ on elements $Q$ given with prescribed error $\delta$. The detailed survey of existing results and further references can be found, for instance, in~\cite{Arestov_96}. The following statement is a corollary of the result by V.\,V.~Arestov~\cite[Theorem~2.1]{Arestov_96} that indicates the close relations between this problem and the Stechkin problem.

{\bf Theorem~B. \it If $A$ is a homogeneous (in particular, linear) operator, $Q\subset D_{A}$ is centrally-symmetric convex set, then for every $N\geqslant 0$ and $\delta\geqslant 0$,
\[
  \Omega(\delta,A;Q) \leqslant \mathcal{E}_{\delta}\left(\mathscr{O},A;Q\right) \leqslant \mathcal{E}_{\delta}\left(\mathscr{L},A;Q\right) \leqslant \ell(\delta,A;Q).
\]
Moreover, if there exist an element $x_0\in Q$ and an operator $S_0\in \mathcal{L}(X,Y)$ satisfying relation~(\ref{extremal_condition}) from Theorem~A then for $\delta = \left\|x_0\right\|_X$,
\[
	\left\|Ax_0\right\|_Y = \Omega(\delta,A;Q) = \mathcal{E}_\delta(\mathscr{O},A;Q) = \mathcal{E}_\delta(\mathscr{L},A;Q).
\]
}

Similarly to the Stechkin problem, in the case $G = \mathbb{R}$ or $G = \mathbb{R}_+$, $X = L_p(G)$ and $Y = L_\infty(G)$, for every $\delta > 0$, we have
\[
	\mathcal{E}_\delta \left(\mathscr{O},D^{k}_{-};W^{r}_{p,s}(G)\right) = \mathcal{E}_\delta \left(\mathscr{L},D^{k}_{-};W^{r}_{p,s}(G)\right) = \Omega\left(\delta,D^{k}_{-};W^{r}_{p,s}(G)\right).
\]
So that once the sharp constant in inequality~(\ref{multiplicative_inequality_fractional}) is found, we immediately know the value of the error of optimal recovery of operator $D^{k}_{-}$ by operators from $\mathscr{O}$ (or $\mathscr{L}$) on elements of the class $W^r_{p,s}(G)$ given with error $\delta$.



\subsection{Organization of the paper}

The paper is organised in the following way. Section~\ref{Sec:aux} is devoted to auxiliary results concerning properties of the Marchaud fractional derivatives: existence, continuity and integral representation in terms of the higher order function derivative. Then, we establish some sufficient conditions when sharp Kolmogorov type inequalities~(\ref{multiplicative_inequality_fractional}) can be written and derive some consequences from these conditions for $r=1,2$ in Section~\ref{Sec:Low_smoothness}. Finally, in Section~\ref{Sec:Applications} we present applications of main results: the Kolmogorov problem for three numbers consisting in finding necessary and sufficient conditions on the triple of real positive numbers that guarantee existence of a function attaining these numbers as the norms of its three consecutive derivatives, and sharp Kolmogorov type inequalities for the weighted norms of the Hadamard fractional derivatives.

\section{Auxiliary Results}
\label{Sec:aux}

In this section we formulate auxiliary propositions on existence and continuity of the Marchaud fractional derivative and its integral representation in terms of higher order derivative. These and similar questions were studied by many mathematicians. For the overview of known results we refer the reader to the books~\cite{Sam,Kil} and references therein.

\subsection{Definitions and results}
\label{Subsec:defs}

Let $G = \mathbb{R}$ or $G = \mathbb{R}_+$. By $\mathfrak{S}(G)$ we denote the space of measurable functions $f:G\to\mathbb{R}$. The linear space $E\subset \mathfrak{S}(G)$ endowed with the norm $\|\cdot\|_{E}$ is called {\it the ideal lattice on $G$} (see~\cite[Ch.~2, \S2]{Krein}) if for every $f\in E$ and  $g\in\mathfrak{S}(G)$ such that $|g(x)| \leqslant |f(x)|$ a. e. on $G$, it follows that $g\in E$ and $\|g\|_{E}\leqslant \|f\|_{E}$. The set $A(E)\subset G$ is called {\it the support} of ideal lattice $E$ if $f(x) = 0$ for every $f\in E$ and $x\not\in A(E)$. By $E^1$ we denote the {\it associated space} to $E$ (see~\cite[Ch.~2, \S3]{Krein}), i.e. the space of functions $g\in \mathfrak{S}(G)$ such that $\textrm{supp}\,g \subset A(E)$ and
\[
	\|g\|_{E^1} := \sup\limits_{f\in E,\atop \|f\|_{E}\leqslant 1} \int_G f(x)g(x)\,dx < \infty.
\]
It is clear that $E^1$ is the ideal lattice on $G$ and is a subspace in the space dual to $E$. Ideal lattices generalize many important spaces e.g. spaces $L_p(G)$, $1\leqslant p\leqslant\infty$, the Orlicz spaces~\cite{Krasn}, the Lorentz spaces~\cite{Krein}, the Marcinkiewicz spaces~\cite{Krein}, etc.

In what follows we would also say that an ideal lattice $E$ is {\it semi shift-invariant} if for every $f\in E$ and $x\in G$ we have $f(\cdot + x)\in E$ and either $\left\|f(\cdot + x)\right\|_{E} = \left\|f\right\|_{E}$ if $G = \mathbb{R}$ or $\left\|f(\cdot + x)\right\|_{E} \leqslant \left\|f\right\|_{E}$ if $G = \mathbb{R}_+$.

Let $r\in\mathbb{N}$, $k\in(0,r)\setminus\mathbb{N}$ and $F$ let be an ideal lattice. By $L_{\infty,E}^{r}(G)$ and $L_{F,E}^r(G)$ we denote the spaces of functions $f\in L_{\infty}(G)$ and $f\in F$ respectively such that $f^{(r-1)}$ is locally absolutely continuous on $G$ and $f^{(r)}\in E$. In addition, let $\chi_B$ stand for the characteristic (indicator) function of a measurable set $B\subset \mathbb{R}$. 

\begin{prop}
\label{Prop:continuityGeneral}
Let $G=\mathbb{R}$ or $G = \mathbb{R}_+$, $r\in\mathbb{N}$, $k\in\left(0,r\right)\setminus\mathbb{N}$, $E$ be a semi shift-invariant lattice on $G$ such that
\begin{equation}
\label{limit_condition0}
	(\cdot)^{r-k-1}\chi_{(0,1)}(\cdot)\in E^1
\end{equation}
and
\begin{equation}
\label{limit_condition}
	\lim\limits_{h\to 0^+} \left\|(\cdot)^{r-1-k}\chi_{(0,h)}(\cdot)\right\|_{E^1} =0
\end{equation}
where $E^1$ is the associated space to $E$. Then $D^{k}_{-}f$ exists and is continuous on $G$ for every function $f\in L^{r}_{\infty,E}(G)$.
\end{prop}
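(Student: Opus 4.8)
The plan is to start from the Marchaud definition~(\ref{Marchaud_derivative}) with the particular choice $n = r$, which is admissible since $k \in (0,r)\setminus\mathbb{N}$ means $r > k$. For a function $f \in L^{r}_{\infty,E}(G)$, I would split the defining integral at a small parameter $h > 0$, writing
\[
	D^{k}_{-}f(x) = \frac{1}{\varkappa(k,r)}\left(\int_{0}^{h} + \int_{h}^{+\infty}\right)\frac{\left(\Delta^{r}_{-t}f\right)(x)}{t^{1+k}}\,dt =: I_h(x) + J_h(x).
\]
The tail term $J_h$ is harmless: since $\left|\left(\Delta^{r}_{-t}f\right)(x)\right| \leqslant 2^{r}\|f\|_{L_\infty(G)}$ and $t^{-1-k}$ is integrable on $[h,+\infty)$, the integral converges absolutely and, by dominated convergence, $x \mapsto J_h(x)$ is continuous on $G$ for each fixed $h > 0$ (continuity of the integrand in $x$ follows from continuity of $f$, which holds because $f^{(r-1)}$ is locally absolutely continuous, hence $f$ is $C^{r-1}$, in particular continuous). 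So the whole burden is to show that $I_h(x)$ is well-defined and tends to $0$ uniformly in $x$ as $h \to 0^{+}$; this simultaneously gives existence of $D^{k}_{-}f(x)$ and, being a uniform limit of continuous functions, its continuity.

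The key step is therefore to estimate $\int_{0}^{h}\left|\left(\Delta^{r}_{-t}f\right)(x)\right|t^{-1-k}\,dt$ using the smoothness $f^{(r)}\in E$. Here I would invoke the integral representation of the $r$-th finite difference in terms of $f^{(r)}$: writing
\[
	\left(\Delta^{r}_{-t}f\right)(x) = \int_{0}^{t}\!\!\cdots\!\int_{0}^{t} f^{(r)}(x - u_1 - \cdots - u_r)\,du_1\cdots du_r = \int_{0}^{rt} \mathcal{N}_r\!\left(\tfrac{u}{t}\right) f^{(r)}(x-u)\,du,
\]
where $\mathcal{N}_r$ is the (bounded, compactly supported on $[0,r]$) $B$-spline kernel. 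Plugging this in, applying Fubini (justified by the absolute convergence to be verified a posteriori), and bounding $|\mathcal{N}_r| \leqslant C_r$, I get
\[
	\int_{0}^{h}\frac{\left|\left(\Delta^{r}_{-t}f\right)(x)\right|}{t^{1+k}}\,dt \leqslant C_r \int_{0}^{rh}\left|f^{(r)}(x-u)\right|\left(\int_{u/r}^{h} t^{-1-k}\,dt\right)du \leqslant \frac{C_r}{k}\int_{0}^{rh}\frac{\left|f^{(r)}(x-u)\right|}{(u/r)^{k}}\,du
\]
after estimating the inner $t$-integral by $\tfrac{1}{k}(u/r)^{-k}$. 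The last expression is, up to constants, $\int_{G} \left|f^{(r)}(y)\right|\cdot g_{x,h}(y)\,dy$ where $g_{x,h}(y) = (x-y)^{r-1-k}\chi_{(0,rh)}(x-y)\cdot(\text{bounded factor})$ — more precisely, after absorbing the power of $u$ correctly one lands on a kernel of the form $(x-y)^{-k}\chi_{(x-rh,x)}(y)$ near $y = x$. By the definition of the associated space and its norm, this is bounded by $\|f^{(r)}\|_{E}\cdot\left\|(\cdot)^{r-1-k}\chi_{(0,rh)}(\cdot)\right\|_{E^1}$ (using semi shift-invariance of $E$ to translate the kernel to the origin, and monotonicity of the $E^1$-norm to pass from the $-k$ power to the $r-1-k$ power if needed, adjusting the interval). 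Condition~(\ref{limit_condition0}) makes this finite for small $h$, so $I_h(x)$ converges absolutely; condition~(\ref{limit_condition}) makes the bound tend to $0$ as $h \to 0^{+}$, uniformly in $x$ since the bound does not depend on $x$.

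The main obstacle I anticipate is bookkeeping the exact power of $t$ (resp. $u$) and the exact interval that emerge after integrating the $B$-spline representation, so that the resulting kernel matches the function $(\cdot)^{r-1-k}\chi_{(0,h)}$ appearing in hypotheses~(\ref{limit_condition0})--(\ref{limit_condition}) rather than merely $(\cdot)^{-k}\chi_{(0,h)}$ — the discrepancy between the exponents $-k$ and $r-1-k$ has to be reconciled by choosing where exactly to stop peeling off antiderivatives in the representation of $\Delta^{r}_{-t}f$ (one could integrate only once or twice rather than $r$ times, producing $\Delta^{1}_{-t}f^{(r-1)}$ and a single power of $t$, which after the $t$-integration over $(0,h)$ gives the clean kernel $(x-y)^{-k}$; then a further rearrangement ties it to $(\cdot)^{r-1-k}$). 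Once this matching is pinned down, the rest is the routine dominated-convergence and Fubini justification. A secondary, minor point specific to $G = \mathbb{R}_+$ is that $x - mt$ may leave $[0,+\infty)$; there one uses that $E$ has support in $G$ and the difference operator only involves points that the half-line version of~(\ref{Marchaud_derivative}) is set up to handle, together with the ``$\leqslant$'' in the semi shift-invariance condition, which is exactly why the hypothesis was stated with that inequality on the half-line.
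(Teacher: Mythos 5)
Your overall route is the same as the paper's: split the Marchaud integral at $h$, bound the tail by $2^{r}\|f\|_{L_\infty(G)}/(kh^{k})$, and control the near part via the B-spline representation of $\Delta^{r}_{-t}f$ together with Fubini and the H\"older inequality between $E$ and $E^{1}$. Your continuity argument (uniform convergence of $I_h\to 0$ plus dominated convergence for $J_h$ at fixed $h$) is a legitimate, in fact slightly cleaner, variant of the paper's three-way split with uniform continuity on compacts. However, there is a genuine gap at the decisive estimate, and it is exactly the point you flag as "bookkeeping": after bounding $|\mathcal{N}_r|\leqslant C_r$ and integrating out $t$, you land on the kernel $u^{-k}\chi_{(0,rh)}(u)$, whereas the hypotheses~(\ref{limit_condition0})--(\ref{limit_condition}) only control $u^{r-1-k}\chi_{(0,h)}(u)$. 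For $0<u<1$ and $r\geqslant 2$ one has $u^{r-1-k}\leqslant u^{-k}$, so the monotonicity of the $E^{1}$-norm runs in the \emph{wrong} direction: finiteness (or smallness) of $\left\|(\cdot)^{r-1-k}\chi_{(0,h)}\right\|_{E^{1}}$ tells you nothing about $\left\|(\cdot)^{-k}\chi_{(0,h)}\right\|_{E^{1}}$. This is not a cosmetic issue: take $E=L_\infty(G)$, $E^{1}=L_1(G)$, $r=2$, $k\in(1,2)$; then $(\cdot)^{1-k}\chi_{(0,1)}\in L_1$ but $(\cdot)^{-k}\chi_{(0,1)}\notin L_1$, so with the constant bound on the B-spline your estimate for $I_h$ is $+\infty$ and the argument collapses, even though the proposition is true in that case.

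The repair is not "peeling off fewer antiderivatives" (a single integration by parts gives $\Delta^{1}_{-t}f^{(r-1)}$ and still only one power of $t$, hence again the kernel $u^{-k}$); it is to keep the B-spline kernel and use the pointwise bound $N_r(w)\leqslant w^{r-1}$ on $[0,r]$. Then the inner integral becomes
\[
	\int_{v/r}^{h}\frac{N_r(v/t)}{t^{k+2-r}}\,dt \;\leqslant\; v^{r-1}\int_{v/r}^{h}\frac{dt}{t^{1+k}} \;\leqslant\; \frac{v^{r-k-1}}{k\,r^{k}},
\]
which produces precisely the kernel $(\cdot)^{r-1-k}\chi_{(0,rh)}(\cdot)$ matching the hypotheses; this is how the paper closes the step. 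A secondary slip: for $\Delta^{r}_{-t}$ as defined in~(\ref{difference}) the representation involves $f^{(r)}(x+ut)$, not $f^{(r)}(x-u_1-\cdots-u_r)$, so on $G=\mathbb{R}_+$ the arguments never leave the half-line and the concern you raise there is moot.
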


\begin{prop}
\label{Prop:integral_representationGeneral}
Let $G=\mathbb{R}$ or $G = \mathbb{R}_+$, $r\in\mathbb{N}$, $k\in\left(0,r\right)\setminus\mathbb{N}$, $E$ be a semi shift-invariant lattice on $G$ satisfying condition~(\ref{limit_condition0}). Then for every $f\in L^{r}_{\infty,E}(G)$,
\begin{equation}
\label{integral_representationGeneral}
	D^{k}_{-}f(\cdot) = \frac{(-1)^r}{\Gamma(r-k)}\int_0^{+\infty} t^{r-1-k} f^{(r)}(\cdot + t)\,du.
\end{equation}
\end{prop}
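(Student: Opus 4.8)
\textbf{Proof plan for Proposition~\ref{Prop:integral_representationGeneral}.}

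The strategy is to start from the Marchaud definition~(\ref{Marchaud_derivative}) with a conveniently chosen order of difference, namely $n=r$, and to rewrite the $r$-th finite difference $\left(\Delta^r_{-t}f\right)(x)$ in integral form using the smoothness of $f$. Concretely, for $f\in L^r_{\infty,E}(G)$ the function $f^{(r-1)}$ is locally absolutely continuous with $f^{(r)}\in E\subset L_{1,\mathrm{loc}}$, so the classical identity
\[
	\left(\Delta^r_{-t}f\right)(x) = \int_{[0,t]^r} f^{(r)}\bigl(x + u_1 + \dots + u_r\bigr)\,du_1\cdots du_r
\]
holds pointwise (this is the iterated form of $\Delta^1_{-t}g(x)=\int_0^t g'(x+u)\,du$, applied $r$ times). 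Substituting this into~(\ref{Marchaud_derivative}) and interchanging the order of integration (Fubini), one gets
\[
	D^k_-f(x) = \frac{1}{\varkappa(k,r)} \int_0^{+\infty} t^{-1-k} \int_{[0,t]^r} f^{(r)}\bigl(x+u_1+\dots+u_r\bigr)\,du\,dt.
\]
The remaining work is to collapse the inner $r$-fold integral against the weight $t^{-1-k}$ into a single integral of $f^{(r)}(x+\cdot)$ against the power weight $t^{r-1-k}$, and to identify the resulting constant with $(-1)^r/\Gamma(r-k)$.

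For the constant, I would reduce $\varkappa(k,r)$ using the classical summation formula $\sum_{m=0}^{r}(-1)^m\binom{r}{m}m^k$; combined with the reflection/Beta-function computation this is exactly what produces $\Gamma(r-k)$ after the change of variables below, so it suffices to track the normalization carefully. For the integral manipulation, I would switch to the variable $v = u_1+\dots+u_r$: by the standard "volume of a slice of a simplex/cube" computation, for fixed $t$ the pushforward of Lebesgue measure on $[0,t]^r$ under the sum map has a density $\rho_{r,t}(v)$ supported on $[0,rt]$, and after integrating in $t$ first (for fixed $v$, the set of $t$ with $v\le rt$, i.e. $t\ge v/r$, contributes) one obtains $\int_0^\infty t^{-1-k}\rho_{r,t}(v)\,dt = c_{r,k}\, v^{r-1-k}$ for an explicit constant $c_{r,k}$. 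This is cleanest to verify by homogeneity: $\rho_{r,t}(v) = t^{r-1}\rho_{r,1}(v/t)$, so $\int_0^\infty t^{-1-k}\rho_{r,t}(v)\,dt = v^{r-1-k}\int_0^\infty s^{k-r}\rho_{r,1}(s)\,ds$ after the substitution $s=v/t$, and the last integral is a finite positive number expressible through Beta functions, which together with $1/\varkappa(k,r)$ yields precisely $(-1)^r/\Gamma(r-k)$.

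The main obstacle is justifying the Fubini interchange and the convergence of the resulting integral near $t=0^+$ — this is exactly where hypothesis~(\ref{limit_condition0}) enters. Near $t=0$ the inner integral is $O(t^r \cdot \|f^{(r)}\|)$ only locally; to control it globally against $t^{-1-k}$ (equivalently, to make sense of $\int_0^1 t^{r-1-k} |f^{(r)}(x+t)|\,dt$ on the right-hand side of~(\ref{integral_representationGeneral})) one estimates $\int_0^1 t^{r-1-k}|f^{(r)}(x+t)|\,dt \le \bigl\|(\cdot)^{r-1-k}\chi_{(0,1)}\bigr\|_{E^1}\cdot \|f^{(r)}(x+\cdot)\|_E$, and the last factor is bounded by $\|f^{(r)}\|_E$ because $E$ is semi shift-invariant; condition~(\ref{limit_condition0}) is precisely what makes the $E^1$-norm finite. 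For $t\ge 1$ there is no issue since $f^{(r)}\in E$ and $t^{r-1-k}$ is dominated by the decay/boundedness built into membership in $E$ together with $\|f^{(r)}\|_E<\infty$ (and, when needed, by the fact that $f\in L_\infty$ so the original difference quotient in~(\ref{Marchaud_derivative}) is $O(t^{-1-k})$ near infinity, guaranteeing absolute convergence there as well). Once absolute convergence of the double integral is established via these two regimes, Fubini applies, the change of variables goes through, and~(\ref{integral_representationGeneral}) follows; I note the typo "$du$" at the end of~(\ref{integral_representationGeneral}) should read "$dt$".
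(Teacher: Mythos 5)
Your argument is essentially the paper's own proof in different notation: the slice density $\rho_{r,t}(v)=t^{r-1}N_r(v/t)$ of the sum map on $[0,t]^r$ is exactly the B-spline kernel the paper substitutes via $\left(\Delta^r_{-t}f\right)(x)=(-1)^r t^{r}\int_0^{r}N_r(u)f^{(r)}(x+ut)\,du$, and your constant $\int_0^\infty s^{k-r}\rho_{r,1}(s)\,ds=\int_0^r N_r(u)u^{k-r}\,du=\varkappa(k,r)/\Gamma(r-k)$ is evaluated there by applying $\Delta^r_{-1}$ to the power function, precisely as you propose, with the $t\to 0^+$ regime controlled through condition~(\ref{limit_condition0}) in the same way. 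The only slip is the missing factor $(-1)^r$ in your displayed identity for $\left(\Delta^r_{-t}f\right)(x)$ (note $\Delta^1_{-t}f(x)=f(x)-f(x+t)=-\int_0^t f'(x+u)\,du$), which you would catch when tracking the normalization as you indicate.
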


\begin{prop}
\label{Prop:existence_and_integral_representation}
Let $G = \mathbb{R}$ or $G = \mathbb{R}_+$, $r\in\mathbb{N}$, $k\in(0,r)\setminus\mathbb{N}$, $E$ be a semi-shift invariant lattice on $G$ satisfying condition~(\ref{limit_condition0}) and $F$ be a semi-shift invariant lattice on $G$ such that $\chi_{(0,1)} \in F^1$. Then $D^k_{-}f(x)$ exists for every $f\in L^r_{F,E}$ and $x\in G$, and integral representation~(\ref{integral_representationGeneral}) for $D^k_{-}f$ holds true.
\end{prop}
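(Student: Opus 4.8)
The plan is to deduce Proposition~\ref{Prop:existence_and_integral_representation} from Propositions~\ref{Prop:continuityGeneral} and~\ref{Prop:integral_representationGeneral} by reducing the hypothesis ``$f\in L^r_{F,E}$'' to the hypothesis ``$f\in L^r_{\infty,E}$'' used there. Fix $f\in L^r_{F,E}(G)$, so that $f\in F$, $f^{(r-1)}$ is locally absolutely continuous, and $f^{(r)}\in E$. The first step is to show that $f$ is automatically bounded on $G$, i.e. $f\in L_\infty(G)$; more precisely I would show that $f^{(j)}\in L_\infty(G)$ for every $j=0,1,\dots,r-1$ on any interval of the form $[x,\infty)\cap G$ (uniformly in $x$ when $G=\mathbb R$, after shift-invariance). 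Granting boundedness, $f\in L^r_{\infty,E}(G)$, and then Proposition~\ref{Prop:continuityGeneral} (once one checks that the extra limit condition~\eqref{limit_condition} is not actually needed for mere pointwise existence — or, more cleanly, Proposition~\ref{Prop:integral_representationGeneral}, which requires only~\eqref{limit_condition0}) gives both existence of $D^k_-f(x)$ for all $x\in G$ and the integral representation~\eqref{integral_representationGeneral}.

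The crux is therefore the boundedness step, and this is where the hypothesis $\chi_{(0,1)}\in F^1$ enters. The idea is a Taylor-type bootstrapping argument run ``from $x$ to $+\infty$.'' Write, using local absolute continuity of $f^{(r-1)}$ and $f^{(r)}\in E$, the identity
\[
	f(x) = \sum_{j=0}^{r-1}\frac{f^{(j)}(x_0)}{j!}(x-x_0)^j + \frac{1}{(r-1)!}\int_{x_0}^{x}(x-t)^{r-1}f^{(r)}(t)\,dt ,
\]
and exploit that $f\in F$ to control the polynomial part. Concretely: for $h>0$ consider $\int_0^h f(x+u)\,du$. Splitting this into the polynomial-in-$u$ contribution plus the iterated integral of $f^{(r)}$, and using $|\int_0^h f(x+u)\,du|\le \|f(x+\cdot)\chi_{(0,h)}\|_F \cdot \|\chi_{(0,1)}\|_{F^1}$ (up to a harmless rescaling of the cutoff, since $F$ is semi-shift-invariant and $\chi_{(0,1)}\in F^1$ implies $\chi_{(0,h)}\in F^1$ for $h\le 1$), one obtains a bound on a nondegenerate linear combination of $f(x), f'(x),\dots,f^{(r-1)}(x)$ that is uniform in $x$. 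Doing this for $r$ distinct values of $h$ (say $h=1,2,\dots,r$) and inverting the resulting Vandermonde-type linear system yields uniform bounds on each $f^{(j)}(x)$, $0\le j\le r-1$, in terms of $\|f\|_F$ and $\|f^{(r)}\|_E$. (The terms involving $f^{(r)}$ are controlled by $\|t^{r-1-k}\chi_{(0,1)}\|_{E^1}\le\|t^{r-1-k}\chi_{(0,R)}\|_{E^1}<\infty$, finite by~\eqref{limit_condition0} and semi-shift-invariance, after crude estimates $|(x-t)^{r-1}|\le C$ on the finite window.) I expect this Vandermonde inversion plus the bookkeeping of which lattice norm controls which piece to be the main technical obstacle; the rest is an application of the already-proved propositions.

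Once boundedness is in hand, the verification of~\eqref{integral_representationGeneral} is immediate: the right-hand side of~\eqref{integral_representationGeneral} converges absolutely because near $t=0$ the integrand $t^{r-1-k}f^{(r)}(\cdot+t)$ is dominated in the pairing by~\eqref{limit_condition0} applied to $f^{(r)}\in E$ via $E$--$E^1$ duality, and near $t=+\infty$ by $r-1-k<-1$ is false in general, so one instead uses $r-k<1$... — more carefully, one integrates by parts $r$ times (or invokes Proposition~\ref{Prop:integral_representationGeneral} verbatim, whose proof already handles exactly this), transferring derivatives off $f$ onto the kernel and matching the Marchaud definition~\eqref{Marchaud_derivative} through the classical identity relating $\Delta^n_{\pm t}$ to iterated integrals of $f^{(r)}$. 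Thus the statement follows by citing Proposition~\ref{Prop:integral_representationGeneral} with $L^r_{\infty,E}(G)\supset L^r_{F,E}(G)$, which is the content of the boundedness lemma above.
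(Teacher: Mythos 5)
Your overall strategy is genuinely different from the paper's: you try to reduce $f\in L^r_{F,E}$ to $f\in L^r_{\infty,E}$ via a boundedness lemma and then cite Proposition~\ref{Prop:integral_representationGeneral}, whereas the paper never shows $f$ is bounded. Instead it splits the Marchaud integral at $t=1$: the piece $\int_0^1 t^{-1-k}\left(\Delta^r_{-t}f\right)(x)\,dt$ is estimated verbatim as in the proof of Proposition~\ref{Prop:continuityGeneral} using the B-spline identity~(\ref{delta-int1}) — this pairs $f^{(r)}$ against a kernel dominated by $v^{r-k-1}\chi_{(0,rh)}(v)$, so condition~(\ref{limit_condition0}) is exactly what is needed and no bound on $f$ enters; the tail $\int_1^{\infty}$ is handled by expanding $\Delta^r_{-t}f$ into its $r+1$ binomial terms and estimating $\int_m^{\infty} u^{-1-k}|f(x+u)|\,du \leqslant \|f\|_F\,\|(\cdot)^{-1-k}\chi_{(m,\infty)}\|_{F^1}$, which is finite by chopping $(m,\infty)$ into unit intervals and using $\chi_{(0,1)}\in F^1$ together with semi-shift-invariance. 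Each hypothesis is used exactly once and exactly where it belongs.

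The gap in your version sits in the boundedness step, specifically in the sentence claiming that the $f^{(r)}$-terms of the Taylor remainder are ``controlled by $\|t^{r-1-k}\chi_{(0,1)}\|_{E^1}$ ... after crude estimates $|(x-t)^{r-1}|\leqslant C$ on the finite window.'' Bounding the kernel by a constant leaves you needing $\int_x^{x+h}|f^{(r)}|$, i.e.\ a pairing of $f^{(r)}$ against $\chi_{(0,h)}$; but when $k<r-1$ the weight $t^{r-1-k}$ vanishes at $t=0$, so $\chi_{(0,h)}$ is \emph{not} dominated by $t^{r-1-k}\chi_{(0,1)}$ and membership of the latter in $E^1$ does not give you what you assert. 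One can repair this (on $\mathbb{R}$ by shifting the base point back by $1/2$ so that the weight is bounded below on the window; on $\mathbb{R}_+$ the leftmost window must be treated separately using only local absolute continuity of $f^{(r-1)}$, which yields a bound depending on $f$ rather than on $\|f^{(r)}\|_E$ — still enough for pointwise existence), and the Vandermonde inversion you sketch is standard, so the route can be completed; but as written the key lemma is both unproved and justified by an incorrect domination. The closing paragraph also wavers on convergence of the representation at $t=\infty$ before falling back on Proposition~\ref{Prop:integral_representationGeneral}; note that the paper itself simply observes that once $D^k_-f(x)$ exists, the computation proving~(\ref{integral_representationGeneral}) in Proposition~\ref{Prop:integral_representationGeneral} goes through unchanged, so that part of your plan is fine.
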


In particular, when $E = L_s(G)$, $1\leqslant s\leqslant \infty$, both conditions~(\ref{limit_condition0}) and~(\ref{limit_condition}) are equivalent to the inequality $k < r-1/s$. So that the following corollaries hold true.

\begin{prop}
\label{Prop:continuity}
Let $G=\mathbb{R}$ or $G = \mathbb{R}_+$, $r\in\mathbb{N}$, $1\leqslant p,s\leqslant \infty$ and $k\in\left(0,r-1/s\right)\setminus\mathbb{N}$. Then for every $f\in L_{p,s}^r(G)$, $D^{k}_{-}f$ exists and is continuous on $G$, and~(\ref{integral_representationGeneral}) holds true.
\end{prop}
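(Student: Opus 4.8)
The plan is to derive this statement by specializing the three preceding propositions to $E = L_s(G)$ and $F = L_p(G)$. First I would record the routine facts these propositions require in this setting. Both $L_s(G)$ and $L_p(G)$ are semi shift-invariant ideal lattices: for $G = \mathbb{R}$ a translation is an isometry, and for $G = \mathbb{R}_+$ a translation by $x \geqslant 0$ amounts to restriction to the smaller domain $[x,+\infty)$, so the norm cannot increase. Next, $\left(L_s(G)\right)^1 = L_{s'}(G)$ and $\left(L_p(G)\right)^1 = L_{p'}(G)$ with $1/s + 1/s' = 1/p + 1/p' = 1$. A direct computation of $\|t^{r-1-k}\chi_{(0,h)}(t)\|_{L_{s'}(G)}$ then shows that conditions~(\ref{limit_condition0}) and~(\ref{limit_condition}) are both equivalent to $k < r-1/s$: when $s' < \infty$ the integral $\int_0^h t^{(r-1-k)s'}\,dt$ is finite and tends to $0$ with $h$ exactly when $(r-1-k)s' > -1$, i.e. $k < r-1+1/s' = r-1/s$; and when $s = 1$ (so $s' = \infty$) one needs $r-1-k \geqslant 0$, which together with $k \notin \mathbb{N}$ again yields $k < r-1 = r-1/s$. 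Finally $\chi_{(0,1)} \in L_{p'}(G)$ for every $1 \leqslant p \leqslant \infty$, since this function is bounded with support of finite measure, so the hypothesis $\chi_{(0,1)} \in F^1$ of Proposition~\ref{Prop:existence_and_integral_representation} is automatic.

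With these preliminaries the case $p = \infty$ is immediate: then $L^r_{p,s}(G) = L^r_{\infty,E}(G)$ with $E = L_s(G)$, and since both~(\ref{limit_condition0}) and~(\ref{limit_condition}) hold under the standing assumption $k < r-1/s$, Propositions~\ref{Prop:continuityGeneral} and~\ref{Prop:integral_representationGeneral} give at once that $D^k_-f$ exists, is continuous on $G$, and satisfies representation~(\ref{integral_representationGeneral}).

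For $1 \leqslant p < \infty$ I would apply Proposition~\ref{Prop:existence_and_integral_representation} with $F = L_p(G)$ and $E = L_s(G)$; since $L^r_{p,s}(G) = L^r_{F,E}(G)$ and all hypotheses were verified above, this supplies the pointwise existence of $D^k_-f(x)$ for every $x \in G$ together with representation~(\ref{integral_representationGeneral}). The only point not yet covered is the \emph{continuity} of $D^k_-f$, which Proposition~\ref{Prop:existence_and_integral_representation} does not assert. The cleanest remedy is to reduce to the already settled case $p = \infty$: for $1 \leqslant p < \infty$ one has $r - 1/s + 1/p > 0$, and the exponent condition in~(\ref{multiplicative_inequality}) with $k = 0$, $q = \infty$ reads $0 \leqslant r/p$, which is trivial, so by Gabushin's theorem~\cite{Gab_67} every $f \in L^r_{p,s}(G)$ obeys $\|f\|_{L_\infty(G)} \leqslant K\|f\|_{L_p(G)}^{\mu}\|f^{(r)}\|_{L_s(G)}^{\lambda}$ with finite $K$; in particular $f \in L_\infty(G)$, hence $f \in L^r_{\infty,E}(G)$, and Proposition~\ref{Prop:continuityGeneral} now yields continuity of $D^k_-f$ on $G$.

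I expect this last step to be the main obstacle. Proposition~\ref{Prop:existence_and_integral_representation} gives only pointwise existence, and continuity cannot be obtained by naive dominated convergence in~(\ref{integral_representationGeneral}) since $f^{(r)}$ need not be continuous; a direct Hölder estimate of the increment $D^k_-f(x+h) - D^k_-f(x)$ also fails, because $t^{r-1-k} \notin L_{s'}$ near infinity precisely when $k < r-1/s$. Routing through the embedding $L^r_{p,s}(G) \hookrightarrow L_\infty(G)$ sidesteps both difficulties. An alternative, more self-contained argument would use that $f^{(r-1)}(u) \to 0$ as $|u| \to \infty$ (a consequence of $f \in L_p(G)$, $p < \infty$, together with uniform continuity of $f^{(r-1)}$), then integrate by parts in~(\ref{integral_representationGeneral}) to lower the power of $t$ and estimate the resulting integrals directly; but the reduction to $p = \infty$ is shorter and reuses work already done.
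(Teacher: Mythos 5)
Your proposal is correct and follows essentially the route the paper intends: the paper states this proposition as an immediate corollary of Propositions~\ref{Prop:continuityGeneral}--\ref{Prop:existence_and_integral_representation} after observing that for $E=L_s(G)$ conditions~(\ref{limit_condition0}) and~(\ref{limit_condition}) reduce to $k<r-1/s$, which is exactly your specialization. Your explicit handling of the case $1\leqslant p<\infty$ via the Gabushin embedding $L^r_{p,s}(G)\hookrightarrow L_\infty(G)$ (so that Proposition~\ref{Prop:continuityGeneral} applies and yields continuity) correctly supplies a step the paper leaves implicit, since Propositions~\ref{Prop:continuityGeneral} and~\ref{Prop:integral_representationGeneral} are stated only for $f\in L^r_{\infty,E}(G)$.
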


\begin{prop}
\label{Prop:continuity1}
Let $G=\mathbb{R}$ or $G = \mathbb{R}_+$, $r\in\mathbb{N}$, $1\leqslant s\leqslant \infty$ and $k\in\left(0,r\right)\setminus\mathbb{N}$. Then $D^{k}_{-}f(x)$ exists for every $f\in L_{s,s}^r(G)$ and $x\in G$, and~(\ref{integral_representationGeneral}) holds true.
\end{prop}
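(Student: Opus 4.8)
\textbf{Proof plan for Proposition~\ref{Prop:continuity1}.}

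The plan is to deduce this statement from Proposition~\ref{Prop:existence_and_integral_representation} by verifying that, with $F = E = L_s(G)$, all the hypotheses of that proposition are met. First I would recall that for the scale of Lebesgue spaces the associated space is explicitly known: $\bigl(L_s(G)\bigr)^1 = L_{s'}(G)$ with $1/s + 1/s' = 1$, and that $L_s(G)$ is plainly semi shift-invariant (translations act isometrically when $G=\mathbb{R}$, and as contractions when $G=\mathbb{R}_+$ because the support can only shrink). So the two nontrivial points to check are (i) condition~(\ref{limit_condition0}), namely $(\cdot)^{r-k-1}\chi_{(0,1)}(\cdot)\in L_{s'}(G)$, and (ii) the condition $\chi_{(0,1)}\in F^1 = L_{s'}(G)$, the latter being obvious since $\chi_{(0,1)}$ is bounded with support of finite measure.

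For (i) I would simply compute: $\bigl\|(\cdot)^{r-k-1}\chi_{(0,1)}\bigr\|_{L_{s'}}^{s'} = \int_0^1 t^{(r-k-1)s'}\,dt$, which is finite exactly when $(r-k-1)s' > -1$, i.e. when $r - k - 1 > -1/s'= -(1-1/s)$, i.e. $k < r - 1/s$. When $s' = \infty$ (that is $s=1$) finiteness of the $L_\infty$-norm requires $r-k-1\geqslant 0$, i.e. $k\leqslant r-1$, which again is implied by $k<r-1/s = r-1$ only in the limiting sense; here one notes that the paper's earlier remark asserts the equivalence of~(\ref{limit_condition0}) with $k<r-1/s$ for $E=L_s$, and for the present proposition we only need $k\in(0,r)\setminus\mathbb{N}$, so I must be slightly careful: condition~(\ref{limit_condition0}) alone need not hold for all $k\in(0,r)$. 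This is precisely why the proposition uses the \emph{pair} $F=E=L_s$ and invokes Proposition~\ref{Prop:existence_and_integral_representation} rather than Proposition~\ref{Prop:continuityGeneral}: the extra integrability of $f$ itself (not just $f^{(r)}$) compensates. Thus the key observation is that the range $k\in(0,r)\setminus\mathbb{N}$ is exactly the range for which one can split the defining integral for $D^k_- f$ into a part near $0$ controlled by $f^{(r)}\in L_s$ and a tail controlled by $f\in L_s$.

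Concretely, I would write the Marchaud integral~(\ref{Marchaud_derivative}) with $n = r$ (legitimate since $r > k$), split $\int_0^{+\infty} = \int_0^1 + \int_1^{+\infty}$, estimate the first integral using the Taylor-type bound $\bigl|(\Delta^r_{-t}f)(x)\bigr| \leqslant t^r \sup_{[x,x+rt]}|f^{(r)}|$ together with a Hölder/Fubini argument against $t^{-1-k}$ (here $r-1-k>-1$ on the relevant piece after one more integration by parts, or one argues directly that $t^{r-1-k}f^{(r)}(x+t)$ is locally integrable in $t$ because $f^{(r)}\in L_s\subset L_{1,\mathrm{loc}}$ and $r-1-k>-1$ fails only when... ), and estimate the tail $\int_1^{+\infty}t^{-1-k}(\Delta^r_{-t}f)(x)\,dt$ by $\|f\|_{L_\infty}$-type bounds replaced here by the fact that $f\in L_s$ and $t^{-1-k}$ decays fast enough that pointwise existence follows from $L_s$-membership via Hölder on each dyadic annulus. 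The main obstacle, I expect, is handling the borderline integrability at $t=0$ cleanly when $s$ is close to $1$ (so $k$ close to $r-1$ or beyond): there one genuinely needs to integrate by parts $r$ times in the Marchaud integral, transferring all derivatives onto the kernel, to arrive at representation~(\ref{integral_representationGeneral}) with kernel $t^{r-1-k}$, and then justify that $\int_0^\infty t^{r-1-k}f^{(r)}(x+t)\,dt$ converges for every fixed $x$ — the tail by Hölder against $f^{(r)}\in L_s$ (note $t^{r-1-k}$ is \emph{not} in $L_{s'}$ near infinity in general, so here one instead integrates by parts once more to throw the kernel onto $f^{(r-1)}$ and uses that $f^{(r-1)}(x+t)\to 0$ along a sequence, or uses $f\in L_s$ directly), and the head by the local integrability just discussed. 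Once convergence of~(\ref{integral_representationGeneral}) is established for each $x\in G$, equality of the two expressions follows by the standard Fubini argument already used in the proof of Proposition~\ref{Prop:integral_representationGeneral}, and the proposition is proved; continuity in $x$ is not claimed here (unlike Proposition~\ref{Prop:continuity}), so no further work is needed.
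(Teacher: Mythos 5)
Your main line --- take $F=E=L_s(G)$, note that $\bigl(L_s(G)\bigr)^1=L_{s'}(G)$ and that $L_s(G)$ is semi shift-invariant, check $\chi_{(0,1)}\in F^1=L_{s'}(G)$, and check condition~(\ref{limit_condition0}) --- is exactly how the paper obtains Proposition~\ref{Prop:continuity1}: it is stated as an immediate corollary of Proposition~\ref{Prop:existence_and_integral_representation}, whose proof controls the head $\int_0^1$ of the Marchaud integral by $f^{(r)}\in E$ via condition~(\ref{limit_condition0}) and the tail $\int_1^{+\infty}$ by $f\in F$ via $\chi_{(0,1)}\in F^1$. Up to the point where you compute that condition~(\ref{limit_condition0}) for $E=L_s(G)$ amounts to $k<r-1/s$, everything you write is correct and coincides with the paper's route.

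The problem is what you do next. You rightly observe that $k<r-1/s$ does not cover the stated range $k\in(0,r)$ when $s<\infty$, but your proposed repair --- that the integrability of $f$ itself ``compensates'' for the failure of condition~(\ref{limit_condition0}) --- is wrong: in the splitting of Proposition~\ref{Prop:existence_and_integral_representation} the hypothesis $f\in F$ is used only for the tail $t\geqslant 1$, whereas the failure of condition~(\ref{limit_condition0}) is a difficulty at $t\to 0^+$, where the only available datum is $f^{(r)}$, and no global integrability of $f$ helps there. Your subsequent sketch does not close this gap (it contains a literal ellipsis at the decisive estimate), and the assertion that $\int_0^1 t^{r-1-k}\left|f^{(r)}(x+t)\right|dt$ converges for \emph{every} $x$ is genuinely false when $k\geqslant r-1/s$. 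Concretely, for $r=s=1$ and $k\in(0,1)$ take $f$ compactly supported, locally absolutely continuous, with $f(0)=0$ and $f'(u)=u^{k/2-1}$ on $(0,1)$; then $f,f'\in L_1(\mathbb{R})$, yet $\int_0^1 t^{-k}\left|f'(t)\right|dt=\int_0^1 t^{-1-k/2}\,dt=+\infty$, and since $\left(\Delta^1_{-t}f\right)(0)=-\tfrac{2}{k}\,t^{k/2}$ has fixed sign the Marchaud integral diverges at $x=0$. So your argument proves the proposition only for $k\in\left(0,r-1/s\right)$, where it anyway coincides with Proposition~\ref{Prop:continuity}; the extension to all of $(0,r)$ is a real gap --- one that the paper's own one-line derivation from Proposition~\ref{Prop:existence_and_integral_representation} shares, since the hypothesis~(\ref{limit_condition0}) of that proposition is precisely $k<r-1/s$ here --- and, at least in the ``for every $x\in G$'' reading, it cannot be closed.
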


\subsection{The proofs of auxiliary results}

For the sake of completeness, we prove Propositions~\ref{Prop:continuityGeneral}--\ref{Prop:existence_and_integral_representation}. Preliminarily, we recall the definition of the B-splines and some of their properties (see, e.g.,~\cite[\S 4.2]{Chui}). The first order B-spline $N_1$ is the function $\chi_{(0,1)}$. For $r\geqslant 2$, the $r$-order B-spline $N_r$ is defined by
\[
	N_r(x) = \int_{\mathbb{R}} N_{r-1}(x-t) N_1(t)\,dt = \int_0^1 N_{r-1}(x-t)\,dt,\qquad x\in\mathbb{R}.
\]
It is known that $N_r$ is continuous and positive on $(0,r)$ function, compactly supported on $[0,r]$. Moreover (see~Theorem~4.3 in~\cite{Chui}), for every $r$-times differentiable function $f:G\to\mathbb{R}$ and every $t>0$,
\begin{equation}
\label{delta-int1}
	\left(\Delta^r_{-t}f\right)\left(x\right)=\left(-1\right)^r t^{r}\int_0^{r}N_r\left(u\right) f^{\left(r\right)}\left(x+ut\right)\,du,\qquad x\in G.
\end{equation}

\begin{proof} [Proof of Proposition~\ref{Prop:continuityGeneral}]
Let a function $f\in L_{\infty,E}^{r}(G)$ and a point $x\in G$ be arbitrary. We observe that  $\left|\left(\Delta^{r}_{-t}f\right)(x)\right| \leqslant 2^{r}\|f\|_{L_{\infty}(G)}$ for every $t>0$. Hence, by definition~(\ref{Marchaud_derivative}) for every $h>0$, we have
\begin{equation}
\label{first_estimate}
	\begin{array}{rcl}
		\varkappa(k,r)\left|D^{k}_{-}f(x)\right| & = & \displaystyle \left|\int_0^{+\infty}\frac{\left(\Delta_{-t}^{r}f\right)(x)}{t^{1+k}}\,dt\right| \\ [10pt]
		& \leqslant & \displaystyle\left|\int_0^{h} \frac{\left(\Delta_{-t}^r f\right)(x)}{t^{1+k}}\,dt\right| + \left|\int_h^{+\infty} \frac{\left(\Delta_{-t}^r f\right)(x)}{t^{1+k}}\,dt\right| \\ [10pt]
		& \leqslant & \displaystyle \left|\int_0^h\frac{\left(\Delta_{-t}^r f\right)(x)}{t^{1+k}}\,dt\right| + \frac{2^r\|f\|_{L_{\infty}(G)}}{k\, h^{k}}.
	\end{array}
\end{equation}
Using formula~(\ref{delta-int1}), changing variables, altering the order of integration and applying the H\"older inequality we obtain
\[
	\begin{array}{rcl}
		\displaystyle \left|\int_0^{h} \frac{\left(\Delta_{-t}^r f\right)(x)}{t^{1+k}}\,dt \right| & = & \displaystyle \left|\int_0^h \int_0^{r} \frac{N_r\left(u\right) f^{(r)}(x+ut)}{t^{k+1-r}}\,du\,dt\right| \\[10pt]
		& = & \displaystyle\left|\int_0^h \int_0^{rt} \frac{N_r\left(v/t\right)f^{(r)}(x+v)}{t^{k+2-r}}\,dv\, dt\right|\\[10pt]
		& = & \displaystyle\left|\int_0^{rh} f^{(r)}(x+v)\int_{v/r}^{h} \frac{N_r\left(v/t\right)}{t^{k+2-r}}\,dt\,dv\right| \\ [10pt]
		& \leqslant & \displaystyle \left\|f^{(r)}\right\|_{E}\cdot \left\|\chi_{(0,rh)}(\cdot)\int_{(\cdot)/r}^h \frac{ N_r\left((\cdot)/t\right)}{t^{k+2-r}}\,dt\right\|_{E^1} .
	\end{array}
\]
It is easy to show that $N_r(x) \leqslant x^{r-1}$ for every $x\in[0,r]$. Hence, for every $v\in(0,rh)$,
\[
	\int_{v/r}^{h} \frac{N_r(v/t)}{t^{k+2-r}}\,dt \leqslant v^{r-1} \int_{v/r}^{h} \frac{dt}{t^{k+1}} \leqslant \frac{v^{r-k-1}}{kr^{k}}.
\]
From the latter and estimate~(\ref{first_estimate}) we conclude that
\[
	\varkappa(k,r)\left|D^{k}_{-}f(x)\right| \leqslant \frac{2^r\|f\|_{L_{\infty}(G)}}{k\,h^{k}} + \frac{\left\|(\cdot)^{r-1-k} \chi_{(0,rh)}(\cdot)\right\|_{E^1}\cdot \left\|f^{(r)}\right\|_{E}}{kr^{k}},
\]
which proves existence and uniform boundedness of derivative $D^{k}_{-}f(x)$ at an arbitrary point $x\in G$.

Now, we turn to the proof of continuity of $D^{k}_{-}f$ on $G$. Let $x\in G$ be an arbitrary point. For every $\varepsilon > 0$, there exist numbers $h>0$ and $H>0$ such that
\[
	\frac{\left\|(\cdot)^{r-k-1}\chi_{(0,rh)}(\cdot)\right\|_{E^1}\cdot \left\|f^{(r)}\right\|_{E}}{kr^{k}\varkappa(k,r)} < \frac{\varepsilon}{6}\qquad\textrm{and}\qquad \frac{2^{r+1}\left\|f\right\|_{L_{\infty}(G)}}{kH^{k}\varkappa(k,r)} < \frac{\varepsilon}{3}.
\]
The function $f$ is continuous on $G$ and is uniformly continuous on $[h/2,rH+rh/2]$. Hence, there exists $\delta\in(0,h/2)$ such that for every $y',y''\in [h/2,rH+rh/2]$, $\left|y' - y''\right| < \delta$, we have $\left|f(y') - f(y'')\right| <2^{-r}  k h^{k} \left|\varkappa(k,r)\right|\varepsilon$. Then using similar arguments as in the proof of existence of $D^{k}_-f$, we obtain that for every $x,y\in G$, $|x - y| < \delta$, for the function $g(t) = f(x+t) - f(y+t)$ we have,
\[
	\begin{array}{l}
		\left|D^{k}_{-}f(x) - D^{k}_{-}f(y)\right| \leqslant \displaystyle \frac{1}{\varkappa(k,r)}\left|\int_{0}^{+\infty} \frac{\left(\Delta_{-t}^{r}g\right)(0)}{t^{1+k}}\,dt\right| \\ [10pt]
		\qquad \leqslant \displaystyle \frac{1}{\varkappa(k,r)}\left(\left|\int_{0}^{h} \frac{\left(\Delta_{-t}^{r}g\right)(0)}{t^{1+k}}\,dt\right| + \left|\int_{h}^{H} \frac{\left(\Delta_{-t}^{r}g\right)(0)}{t^{1+k}}\,dt\right| \right. \\ [10pt]
		\qquad\qquad \displaystyle + \left. \left|\int_{H}^{+\infty} \frac{\left(\Delta_{-t}^{r}g\right)(0)}{t^{1+k}}\,dt\right|\right) \\[10pt]
		\qquad \displaystyle \leqslant \displaystyle\frac{\left\|g^{(r)}\right\|_{E}\cdot\left\|(\cdot)^{r-k-1}\chi_{(0,rh)}(\cdot)\right\|_{E^1}}{kr^{k}\varkappa(k,r)} + \frac{2^{r}\left\|g\right\|_{L_{\infty}([h,rH])}}{kh^{k}\varkappa(k,r)} +  \frac{2^{r}\|g\|_{L_{\infty}(G)}}{kH^{k}\varkappa(k,r)} \\[10pt]
		\qquad\leqslant \displaystyle  \frac{2\left\|f^{(r)}\right\|_{E}\cdot\left\|(\cdot)^{r-k-1}\chi_{(0,rh)}(\cdot)\right\|_{E^1}}{kr^{k}\varkappa(k,r)} + \frac{2^{r}\left\|f(x+\cdot) - f(y+\cdot)\right\|_{L_{\infty}([h,rH])}}{kh^{k}\varkappa(k,r) } \\ [10pt]
		\displaystyle\qquad \qquad +  \frac{2^{r+1}\|f\|_{L_{\infty}(G)}}{kH^{k}\varkappa(k,r) } < \displaystyle \frac{\varepsilon}{3} + \frac{\varepsilon}{3} + \frac{\varepsilon}{3} = \varepsilon.
	\end{array}
\]
Therefore, the $D^{k}_{-}f$ is continuous on $G$.
\end{proof}

{\bf Remark~1.} During the proof of Proposition~\ref{Prop:continuityGeneral} we have established the Kolmogorov type inequality between the uniform norm of fractional derivative of function, the function itself and the norm of its higher order derivative in the ideal lattice:
\[
	\left\|D^{k}_{-}f\right\|_{L_{\infty}(G)} \leqslant \frac{2^r\|f\|_{L_{\infty}(G)}}{k\,h^{k}\varkappa(k,r)} + \left\|\chi_{(0,rh)}(\cdot)\int_{(\cdot)/r}^h \frac{ N_r\left((\cdot)/t\right)}{t^{k+2-r}}\,dt\right\|_{E^1} \cdot \left\|f^{(r)}\right\|_{E}.
\]

\begin{proof}[Proof of Proposition~\ref{Prop:integral_representationGeneral}]
First we note that
\[
	\begin{array}{rcl}
		\displaystyle \int_0^{r} \frac{N_r(u)}{u^{r-k}}\,du & = & \displaystyle \frac{(-1)^{r}}{k(k-1)\ldots (k-r+1)}\,\left(\Delta_{-1}^r (\cdot)^{k}\right)(0)  \\[10pt]
		& = & \displaystyle \frac{(-1)^{r}}{k(k-1)\ldots (k-r+1)}\,\sum\limits_{m=0}^{r} (-1)^{m}\left(r\atop m\right) m^{k}  \\[10pt]
		& = & \displaystyle \frac{\varkappa(k,r)}{\Gamma(-k)(-k)(-k+1)\ldots(-k+r-1)} = \frac{\varkappa(k,r)}{\Gamma(r-k)}.
	\end{array}
\]
Let $f\in L_{\infty,E}^{r}(G)$ and $x\in G$. Derivative $D^{k}_{-}f(x)$ exists due to Proposition~\ref{Prop:continuityGeneral}. Altering the order of integration and applying the Tonelli theorem we obtain
\[
	\begin{array}{rcl}
		\displaystyle D^{k}_{-}f(x) & = & \displaystyle \frac{(-1)^{r}}{\varkappa(k,r)}\int_0^{+\infty}\!\!\!\int_0^{r} t^{r-1-k}N_r\left(u\right) f^{(r)}(x+ut)\,du dt \\[10pt]
		& = & \displaystyle \frac{(-1)^{r}}{\varkappa(k,r)}\int_0^{+\infty} w^{r-k-1} f^{(r)}(x + w) \left(\int_{0}^{r} \frac{N_r\left(u\right)}{u^{r-k}}\,du\right) dw \\ [10pt]
		& = & \displaystyle \frac{(-1)^{r}}{\Gamma(r-k)} \int_0^{+\infty} w^{r-1-k} f^{(r)}(x+w)\,dw
	\end{array}
\]
which finishes the proof.
\end{proof}

\begin{proof}[Proof of Proposition~\ref{Prop:existence_and_integral_representation}]
Let $f\in L^r_{F,E}(G)$ and $x\in G$. Using the same arguments as in the proof of Proposition~\ref{Prop:continuityGeneral} we can prove that the first of two integrals
\[
	\int_0^1 \frac{\left(\Delta_{-t}^r f\right)(x)}{t^{1+k}}\,dt\qquad\textrm{and}\qquad \int_1^{+\infty} \frac{\left(\Delta_{-t}^r f\right)(x)}{t^{1+k}}\,dt
\]
is convergent. Hence, it is sufficient to prove the convergence of the second integral. The latter is obvious because
\[
	\begin{array}{l}
		\displaystyle \left|\int_1^{+\infty} \frac{\left(\Delta_{-t}^r f\right)(x)}{t^{1+k}}\,dt\right| \leqslant \displaystyle \sum\limits_{m=0}^r \left(r\atop m\right) \int_1^{+\infty} \frac{|f(x + mt)|}{t^{1+k}}\,dt \\
		\qquad\qquad = \displaystyle\frac{|f(x)|}{k} + \sum\limits_{m=1}^r m^k\left(r\atop m\right) \int_m^{+\infty} \frac{|f(x + u)|}{u^{1+k}}\,du \\
		\qquad\qquad \leqslant \displaystyle \frac{|f(x)|}{k} + \sum\limits_{m=1}^r m^k\left(r\atop m\right) \|f\|_E\cdot \left\|(\cdot)^{-1-k}\chi_{(m,+\infty)}(\cdot)\right\|_{F^1},
	\end{array}
\]
and for $m=1,2,\ldots,r$,
\[
	\left\|(\cdot)^{-1-k}\chi_{(m,+\infty)}(\cdot)\right\|_{F^1} \leqslant\sum\limits_{j=m}^{\infty} \frac{\left\|\chi_{(0,1)}\right\|_{F^1}}{j^{k+1}} < \frac{m+k}{m^{k+1}k}\, \left\|\chi_{(0,1)}\right\|_{F^1}.
\]
Hence, $D^{k}_{-}f(x)$ exists for every $x\in G$. Finally, we remark that equality~(\ref{integral_representationGeneral}) immediately holds true if $D^{k}_{-}f(x)$ exists. The proof is finished.
\end{proof}

\section{Main Results}
\label{S_new}

Let us present results on some general sufficient conditions allowing to write sharp Kolmogorov type inequality in various situations. We start with the Kolmogorov type inequality between the uniform norms of the Marchaud fractional derivative of a function, the function itself and its higher order derivative. In Subsection~\ref{SubSec:ideal} we extend this result on the case of inequalities between the norms of the function and its derivatives in an ideal lattice. Then in Subsection~\ref{SubSec:mixed} we give another extension of results of Subsection~\ref{SubSec:uniform} on the case of inequalities between the uniform norms of the Marchaud fractional derivative of a function, the uniform norm of the function itself, and the norm of the higher order derivative in an ideal lattice.

Let $G = \mathbb{R}$ or $G = \mathbb{R}_+$. By $V(G)$ we denote the space of functions $f\in L_1(G)$ with bounded on $G$ variation. Also, we set $x_+ := \max\{x;0\}$ for every $x\in G$, and for $f\in L_1(G)$ and $m\in\mathbb{N}$, we denote by $f^{[m]}$ the $m$-th order integral of function $f$:
\[
	f^{[m]}(x) := \frac{1}{(m-1)!} \int_G (x-t)_+^{m-1} f(t)\,dt,\qquad x\in G.
\]
Finally, for $\tau>0$, we define the function $\mathcal{R}_{\tau} : G \to \mathbb{R}$ as follows
\[
	\mathcal{R}_{\tau}(x) = \left\{\begin{array}{ll}
		\frac{x^{\tau-1}}{\Gamma(\tau)}, & x>0,\\ 0, & x\leqslant 0,
	\end{array}\right.
\]

\subsection{The Kolmogorov type inequalities for the Marchaud fractional derivatives: case of uniform norms}
\label{SubSec:uniform}

There holds true the following results.


\begin{thm}
\label{Main_Theorem_uniform}
Let $G = \mathbb{R}_+$ or $G = \mathbb{R}$, $r\in\mathbb{N}$, $k\in(0,r)\setminus\mathbb{N}$, and a function $\Omega\in V(G)$ be such that $\left(\mathcal{R}_{r-k} - \Omega^{[r-1]}\right) \in L_1(G)$ and for every $f\in L^{r}_{\infty,\infty}(G)$,
\begin{equation}
\label{relation}
	D^{k}_{-}f(0) - \int_G f(x) \,d\Omega(x) = (-1)^{r}\int_{G} \left(\mathcal{R}_{r-k}(x) - \Omega^{[r-1]}(x)\right) f^{(r)}(x)\,dx.
\end{equation}
Then for every $f\in L_{\infty,\infty}^{r}(G)$ and $h>0$, there holds true inequality
\begin{equation}
\label{Main_Inequality_uniform}
	\begin{array}{rcl}
		\displaystyle\left\|D^{k}_{-}f\right\|_{L_{\infty}(G)} & \leqslant &\displaystyle h^{-k}\bigvee\limits_{G} \Omega\cdot \|f\|_{L_{\infty}(G)} \\
		& & \qquad \displaystyle+ h^{r-k}\left\|\mathcal{R}_{r-k} - \Omega^{[r-1]}\right\|_{L_1(G)} \cdot\left\|f^{(r)}\right\|_{L_{\infty}(G)}.	
	\end{array}
\end{equation}
Furthermore, if a function $\Phi\in W^{r}_{\infty,\infty}(G)$ satisfies equalities
\begin{equation}
\label{Stiltjes}
	\int_{G} \Phi(t)\, d\Omega(t) = \bigvee_{G}\Omega\cdot \|\Phi\|_{L_{\infty}(G)}
\end{equation}
and
\begin{equation}
\label{Holder_uniform}
	(-1)^{r}\int_G\left(\mathcal{R}_{r-k}(x) - \Omega^{[r-1]}(x)\right)\Phi^{(r)}(x)\,dx = \left\|\mathcal{R}_{r-k} - \Omega^{[r-1]}\right\|_{L_1(G)}
\end{equation}
then~(\ref{Main_Inequality_uniform}) is sharp and the function $\Phi_h(\cdot) := \Phi\left((\cdot)/h\right)$ turns~(\ref{Main_Inequality_uniform}) into equality.
\end{thm}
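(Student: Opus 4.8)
The argument splits into two parts: the inequality~(\ref{Main_Inequality_uniform}) and its sharpness. For the inequality, the plan is to reduce the estimate of $\left\|D^{k}_{-}f\right\|_{L_{\infty}(G)}$ at an arbitrary point to the point $0$ by a shift, and then invoke the representation~(\ref{relation}). Concretely, fix $f\in L^{r}_{\infty,\infty}(G)$ and $x_0\in G$, and apply~(\ref{relation}) to the shifted function $f_{x_0}(\cdot):=f(\cdot+x_0)$ (noting $D^{k}_{-}f(x_0)=D^{k}_{-}f_{x_0}(0)$, which follows from the definition~(\ref{Marchaud_derivative}); on $\mathbb{R}_+$ the shift only shrinks the domain, which is harmless). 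This gives
\[
	D^{k}_{-}f(x_0) = \int_G f(x+x_0)\,d\Omega(x) + (-1)^{r}\int_G\left(\mathcal{R}_{r-k}(x)-\Omega^{[r-1]}(x)\right)f^{(r)}(x+x_0)\,dx.
\]
Bounding the first integral by $\bigvee_G\Omega\cdot\|f\|_{L_\infty(G)}$ (this is exactly the dual pairing of a bounded function against a measure of finite total variation) and the second by $\|\mathcal{R}_{r-k}-\Omega^{[r-1]}\|_{L_1(G)}\cdot\|f^{(r)}\|_{L_\infty(G)}$ yields~(\ref{Main_Inequality_uniform}) with $h=1$. The general $h>0$ then comes by the standard scaling substitution: apply the $h=1$ inequality to $g(\cdot):=f(h\,\cdot)$, use the homogeneity identities $\|g\|_{L_\infty}=\|f\|_{L_\infty}$, $\|g^{(r)}\|_{L_\infty}=h^{r}\|f^{(r)}\|_{L_\infty}$, and $D^{k}_{-}g(x)=h^{k}D^{k}_{-}f(hx)$ (immediate from~(\ref{Marchaud_derivative}) after the change $t\mapsto t/h$), and rearrange; the powers $h^{-k}$ and $h^{r-k}$ appear precisely as written.

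For sharpness, the plan is to substitute the dilated extremizer $\Phi_h(\cdot)=\Phi((\cdot)/h)$ into~(\ref{Main_Inequality_uniform}) and check that both inequalities used above become equalities. By the scaling relations just noted applied to $\Phi_h$, it suffices to verify the $h=1$ case for $\Phi$ itself, i.e.\ that
\[
	D^{k}_{-}\Phi(0) = \bigvee_G\Omega\cdot\|\Phi\|_{L_\infty(G)} + \left\|\mathcal{R}_{r-k}-\Omega^{[r-1]}\right\|_{L_1(G)}.
\]
But this is exactly what hypotheses~(\ref{Stiltjes}) and~(\ref{Holder_uniform}) deliver once we plug $\Phi$ into the representation~(\ref{relation}): the first term on the right of~(\ref{relation}) equals $\bigvee_G\Omega\cdot\|\Phi\|_{L_\infty(G)}$ by~(\ref{Stiltjes}), and the second equals $\|\mathcal{R}_{r-k}-\Omega^{[r-1]}\|_{L_1(G)}$ by~(\ref{Holder_uniform}). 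Hence $D^{k}_{-}\Phi(0)$ attains the right-hand side of~(\ref{Main_Inequality_uniform}) with $h=1$, and, evaluating at the point $0$, $\left\|D^{k}_{-}\Phi_h\right\|_{L_\infty(G)}$ attains the right-hand side of~(\ref{Main_Inequality_uniform}) for general $h$; since the opposite inequality was already proved, equality holds and $\Phi_h$ is extremal.

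\textbf{Main obstacle.} The computational core of the theorem — the representation~(\ref{relation}) — is taken as a hypothesis, so the remaining work is bookkeeping. The subtle point that needs care is the legitimacy of the shift and dilation manipulations: one must confirm that $f_{x_0}$ and $g$ still lie in $L^{r}_{\infty,\infty}(G)$ (true on $\mathbb{R}$ trivially, and on $\mathbb{R}_+$ because restriction to a sub-half-line preserves all the relevant norms and the local absolute continuity of $f^{(r-1)}$), that $\Phi_h\in W^{r}_{\infty,\infty}(G)$ (this uses $\|\Phi_h^{(r)}\|_{L_\infty}=h^{-r}\|\Phi^{(r)}\|_{L_\infty}\leqslant h^{-r}$, finite, so after the normalization built into $W^r_{\infty,\infty}$ it suffices to note the class is dilation-invariant up to a constant and the inequality is stated for all $h$), and that the change of variables in the Marchaud integral is valid — the latter being routine since the integrand in~(\ref{Marchaud_derivative}) is absolutely integrable for functions in $L^{r}_{\infty,\infty}(G)$ by Proposition~\ref{Prop:continuity}. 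No single step is genuinely hard; the only place to stumble is getting the exponents of $h$ consistent between the two sides, which the relations $D^{k}_{-}g(x)=h^{k}D^{k}_{-}f(hx)$ and $\|g^{(r)}\|_{L_\infty}=h^{r}\|f^{(r)}\|_{L_\infty}$ pin down unambiguously.
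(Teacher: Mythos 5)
Your proposal is correct and follows essentially the same route as the paper: the paper packages your "shift to the point $0$" step as a convolution-type operator $Tg(\cdot)=\int_G g(\cdot+t)\,d\Omega(t)$ with $\|T\|=\bigvee_G\Omega$, proves the $h=1$ case, obtains general $h$ by the dilation $f(\cdot/h)$ (equivalent to your $f(h\,\cdot)$ up to $h\mapsto 1/h$), and establishes sharpness by evaluating $D^{k}_{-}\Phi$ at $0$ exactly as you do. The one point you leave implicit — that $\left\|D^{k}_{-}\Phi_h\right\|_{L_{\infty}(G)}\geqslant D^{k}_{-}\Phi_h(0)$ requires the continuity of $D^{k}_{-}\Phi_h$ supplied by Proposition~\ref{Prop:continuity} (an essential supremum is not bounded below by a single point value otherwise) — is stated explicitly in the paper, but you do cite that proposition, so this is a presentational rather than a substantive gap.
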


Minimising the right hand part of~(\ref{Main_Inequality_uniform}) by $h$ we obtain the next consequence.

\begin{cor}
\label{Main_Corollary}
Let $G=\mathbb{R}_+$ or $G = \mathbb{R}$, $r\in\mathbb{N}$, $k\in(0,r)\setminus\mathbb{N}$ and assume that the functions $\Omega$ and $\Phi$ satisfy assumptions of Theorem~\ref{Main_Theorem_uniform}. Then for every $f\in L^{r}_{\infty,\infty}(G)$, there holds true the following sharp inequality
\[
	\left\|D^{k}_{-}f\right\|_{L_{\infty}(G)} \leqslant \frac{\left\|D^{k}_{-}\Phi\right\|_{L_{\infty}(G)}}{\left\|\Phi\right\|_{L_{\infty}(G)}^{1-k/r}} \,\|f\|_{L_{\infty}(G)}^{1-k/r} \left\|f^{(r)}\right\|_{L_{\infty}(G)}^{k/r}.
\]
\end{cor}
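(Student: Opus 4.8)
The plan is to follow the route indicated just before the statement and minimise the right-hand side of~(\ref{Main_Inequality_uniform}) over $h>0$; the left-hand side is independent of $h$. Fix $f\in L^{r}_{\infty,\infty}(G)$. If $f^{(r)}\equiv 0$ then $f$, being a bounded function on $G$ with vanishing $r$-th derivative, is constant, so $D^{k}_{-}f\equiv 0$ and both sides of the asserted inequality are $0$; hence assume $\|f^{(r)}\|_{L_\infty(G)}>0$, and likewise $\bigvee_{G}\Omega>0$ and $\|\mathcal{R}_{r-k}-\Omega^{[r-1]}\|_{L_1(G)}>0$ (the degenerate cases being trivial). Set
\[
	a:=\bigvee_{G}\Omega\cdot\|f\|_{L_\infty(G)},\qquad b:=\bigl\|\mathcal{R}_{r-k}-\Omega^{[r-1]}\bigr\|_{L_1(G)}\cdot\bigl\|f^{(r)}\bigr\|_{L_\infty(G)}>0,
\]
so that~(\ref{Main_Inequality_uniform}) reads $\bigl\|D^{k}_{-}f\bigr\|_{L_\infty(G)}\le g(h):=a\,h^{-k}+b\,h^{r-k}$ for every $h>0$. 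Since $0<k<r$, the positive function $g$ tends to $+\infty$ at both ends of $(0,\infty)$, so its unique stationary point $h_{\ast}=\bigl(ka/((r-k)b)\bigr)^{1/r}$ is its global minimum, and a routine substitution gives
\[
	\min_{h>0}g(h)=g(h_{\ast})=\frac{r}{r-k}\Bigl(\frac{r-k}{k}\Bigr)^{k/r}a^{\,1-k/r}b^{\,k/r}=:C(r,k)\,a^{\,1-k/r}b^{\,k/r}.
\]
Inserting the values of $a$ and $b$ gives, for every $f\in L^{r}_{\infty,\infty}(G)$,
\[
	\bigl\|D^{k}_{-}f\bigr\|_{L_\infty(G)}\le C(r,k)\Bigl(\bigvee_{G}\Omega\Bigr)^{1-k/r}\bigl\|\mathcal{R}_{r-k}-\Omega^{[r-1]}\bigr\|_{L_1(G)}^{k/r}\|f\|_{L_\infty(G)}^{1-k/r}\bigl\|f^{(r)}\bigr\|_{L_\infty(G)}^{k/r}.
\]

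The next step is to recognise the constant as $\bigl\|D^{k}_{-}\Phi\bigr\|_{L_\infty(G)}/\|\Phi\|_{L_\infty(G)}^{1-k/r}$. Here one first checks that $\|\Phi^{(r)}\|_{L_\infty(G)}=1$: the estimate $\le 1$ holds because $\Phi\in W^{r}_{\infty,\infty}(G)$, while $\ge 1$ follows from~(\ref{Holder_uniform}) and the H\"older inequality,
\[
	\bigl\|\mathcal{R}_{r-k}-\Omega^{[r-1]}\bigr\|_{L_1(G)}=(-1)^{r}\int_{G}\bigl(\mathcal{R}_{r-k}(x)-\Omega^{[r-1]}(x)\bigr)\Phi^{(r)}(x)\,dx\le\bigl\|\mathcal{R}_{r-k}-\Omega^{[r-1]}\bigr\|_{L_1(G)}\bigl\|\Phi^{(r)}\bigr\|_{L_\infty(G)}.
\]
Applying the multiplicative inequality just obtained to $f=\Phi\in L^{r}_{\infty,\infty}(G)$ and using $\|\Phi^{(r)}\|_{L_\infty(G)}=1$ yields
\[
	\bigl\|D^{k}_{-}\Phi\bigr\|_{L_\infty(G)}\le C(r,k)\Bigl(\bigvee_{G}\Omega\Bigr)^{1-k/r}\bigl\|\mathcal{R}_{r-k}-\Omega^{[r-1]}\bigr\|_{L_1(G)}^{k/r}\|\Phi\|_{L_\infty(G)}^{1-k/r}.
\]
In the opposite direction, by the final assertion of Theorem~\ref{Main_Theorem_uniform} the function $\Phi$ (namely $\Phi_{h}$ with $h=1$) turns~(\ref{Main_Inequality_uniform}) into an equality, which in view of $\|\Phi^{(r)}\|_{L_\infty(G)}=1$ reads
\[
	\bigl\|D^{k}_{-}\Phi\bigr\|_{L_\infty(G)}=\bigvee_{G}\Omega\cdot\|\Phi\|_{L_\infty(G)}+\bigl\|\mathcal{R}_{r-k}-\Omega^{[r-1]}\bigr\|_{L_1(G)}.
\]
Since $C(r,k)\,x^{1-k/r}y^{k/r}=\min_{h>0}\bigl(xh^{-k}+yh^{r-k}\bigr)\le x+y$ for all $x,y\ge 0$ (evaluate at $h=1$), the right-hand side here is $\ge C(r,k)\bigl(\bigvee_{G}\Omega\bigr)^{1-k/r}\|\mathcal{R}_{r-k}-\Omega^{[r-1]}\|_{L_1(G)}^{k/r}\|\Phi\|_{L_\infty(G)}^{1-k/r}$; comparing with the previous display forces equality throughout, whence
\[
	C(r,k)\Bigl(\bigvee_{G}\Omega\Bigr)^{1-k/r}\bigl\|\mathcal{R}_{r-k}-\Omega^{[r-1]}\bigr\|_{L_1(G)}^{k/r}=\frac{\bigl\|D^{k}_{-}\Phi\bigr\|_{L_\infty(G)}}{\|\Phi\|_{L_\infty(G)}^{1-k/r}},
\]
which is the constant in the statement; together with the inequality from the first paragraph this gives the asserted multiplicative inequality.

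Sharpness is then immediate: equality already holds at $f=\Phi$, and in fact at every dilate $\Phi_{h}(\cdot)=\Phi(\cdot/h)$, $h>0$, because the integral representation~(\ref{integral_representationGeneral}) gives (after a change of variable) $D^{k}_{-}\Phi_{h}=h^{-k}\bigl(D^{k}_{-}\Phi\bigr)(\cdot/h)$, while $\|\Phi_{h}\|_{L_\infty(G)}=\|\Phi\|_{L_\infty(G)}$ and $\|\Phi_{h}^{(r)}\|_{L_\infty(G)}=h^{-r}\|\Phi^{(r)}\|_{L_\infty(G)}$, so both sides of the inequality acquire the same factor $h^{-k}$. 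The whole argument is essentially bookkeeping; the only point that is not purely routine is the forced normalisation $\|\Phi^{(r)}\|_{L_\infty(G)}=1$ — which is precisely what lets the factor $\|\Phi^{(r)}\|_{L_\infty(G)}^{k/r}$ disappear from the denominator of the constant — together with the elementary minimisation of $g$ and the attendant bound $C(r,k)x^{1-k/r}y^{k/r}\le x+y$ used to pin the constant down.
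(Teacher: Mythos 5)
Your proposal is correct and follows exactly the route the paper indicates (the paper gives no separate proof beyond the remark that the corollary follows by minimising the right-hand side of~(\ref{Main_Inequality_uniform}) over $h$): you carry out that minimisation and then correctly supply the bookkeeping the paper leaves implicit, namely that $\|\Phi^{(r)}\|_{L_\infty(G)}=1$ and the sandwich argument identifying $C(r,k)\bigl(\bigvee_G\Omega\bigr)^{1-k/r}\|\mathcal{R}_{r-k}-\Omega^{[r-1]}\|_{L_1(G)}^{k/r}$ with $\|D^k_-\Phi\|_{L_\infty(G)}/\|\Phi\|_{L_\infty(G)}^{1-k/r}$. No gaps.
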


We remark that the following results on sharp inequalities of the form~(\ref{multiplicative_inequality_fractional}) concretize Corollary~\ref{Main_Corollary}.
\begin{enumerate}
\item For $G=\mathbb{R}$, $r=2$ and $k\in(0,1)$, extremal function $\Phi$ in inequality~(\ref{Main_Inequality_uniform}) and corresponding function $\Omega$ that satisfy conditions of Corollary~\ref{Main_Corollary} were found by S.\,P.Geiseberg~\cite{geisb} and V.\,V.~Arestov~\cite{arestov} respectively:
\[
	\Omega(x) :=  \left\{\begin{array}{ll}
		0, & x \leqslant 0, \\
		\frac{1}{\Gamma(2-k)}, & x\in(0,1),\\
		\frac{x^{-k}}{\Gamma(1-k)}, & x\geqslant 1,
	\end{array}\right.
	\; \Phi(x) := \left\{\begin{array}{ll}
		-\frac{(1+p)^2}{8}, & x\leqslant -p,\\
		\frac{(x+p)^2-(1+p)^2}{8}, & x\in \left[-p,\frac{1-p}{2}\right],\\
		\frac{(1+p)^2 - (1-x)^2}{8}, & x\in\left[\frac{1-p}{2},1\right],\\
		\frac{(1+p)^2}{8}, & x\geqslant 1,
	\end{array}\right.
\]
where $p = 1-2^{k/(1-k)}$.
\item For $G=\mathbb{R}_+$, $r=2$ and $k\in(0,1)$, extremal function $\Phi$ and corresponding function $\Omega$ that satisfy conditions of Corollary~\ref{Main_Corollary} were found by V.\,V.~Arestov~\cite[Theorem~3]{arestov}:
\[
	\Omega(x) :=  \left\{\begin{array}{ll}
		0, & x = 0, \\
		\frac{1}{\Gamma(2-k)}, & x\in(0,1),\\
		\frac{x^{-k}}{\Gamma(1-k)}, & x\geqslant 1,
	\end{array}\right.
	\qquad\Phi(x) := \left\{\begin{array}{ll}
		\frac 14 - x + \frac{x^2}2, & x\in[0,1],\\
		-\frac 14, & x \geqslant 1.
	\end{array}\right.
\]
\item For $G=\mathbb{R}_+$, $r=2$ and $k\in(1,2)$, extremal function $\Phi$ and corresponding function $\Omega$ that satisfy conditions of Corollary~\ref{Main_Corollary} were also found by V.\,V.~Arestov~\cite[p.~32]{arestov}:
\[
	\begin{array}{rcl}
		\Omega(x) & := & \displaystyle\left\{\begin{array}{ll}
			0, & x = 0, \\
			\frac{3-2^{(k+1)/2}}{\Gamma(2-k)}, & x\in(0,\sqrt{2}-1],\\
			\frac{2^{k/2} - \sqrt{2}}{\Gamma(2-k)\left(\sqrt{2}-1\right)}, & x\in(\sqrt{2}-1,1),\\
			\frac{x^{-k}}{\Gamma(1-k)}, & x\geqslant 1,
		\end{array}\right. \\ [30pt]
		\Phi(x) & := & \displaystyle\left\{\begin{array}{ll}
			\frac{3-2\sqrt{2} - 4\left(\sqrt{2}-1\right)x + 2x^2}{4}, & x\in\left[0,\frac{1}{\sqrt{2}}\right],\\
			\frac{1-2\sqrt{2}+4x - 2x^2}{4}, & x\in\left(\frac 1{\sqrt{2}}, 1\right), \\
			\frac{3-2\sqrt{2}}{4}, & x \geqslant 1.
		\end{array}\right.
	\end{array}
\]
\end{enumerate}

For integer values of $k$, the extremal function $\Phi$ on $\mathbb{R}$ in inequality~(\ref{multiplicative_inequality}) was explicitly constructed by A.\,N.~Kolmogorov~\cite{Kolm39} (see also~\cite{Kolm_85}) for every $r=2,3,\ldots$. We refer the reader to the surveys~\cite{ArGab,Arestov_96,BKKP-kn} for more references and detailed history of the subject and overview of cases when the extremal function $\Phi$ on $\mathbb{R}_+$ is known.

In addition, for integer values of $k$, the function $\Omega$ on $\mathbb{R}_+$ for which inequality~(\ref{Main_Inequality_uniform}) is sharp was explicitly constructed by S.\,B.~Stechkin~\cite{Ste4} in the case $r=2,3$. In case $G=\mathbb{R}$ existence of such function $\Omega$ was proved by Y.~Domar~\cite{Domar_68} and explicitly it was constructed by S.\,B.~Stechkin~\cite{Ste4} for $r=2,3$, V.\,V.~Arestov~\cite{Are_67} for $r=4,5$ and A.\,P.~Buslaev~\cite{Bus_81} for $r>5$.

\begin{proof}[The proof of Theorem~\ref{Main_Theorem_uniform}]
First, we let $h=1$ and define the linear operator $T:L_{\infty}(G) \to L_{\infty}(G)$ as follows
\[
	Tg(\cdot) := \int_G g(\cdot + t)\,d\Omega(t),\qquad g\in L_{\infty}(G).
\]
Clearly, $T$ is bounded and $\|T\| = \bigvee\limits_{G} \Omega$. Next, let a function $f\in L_{\infty,\infty}^{r}(G)$ and a point $x\in G$ be arbitrary. Then from Proposition~\ref{Prop:integral_representationGeneral} and relation~(\ref{relation}) we deduce
\[
	\begin{array}{rcl}
  		\displaystyle \left|D^{k}_{-}f(x)\right| & = & \displaystyle \left|Tf(x) + \left((-1)^r\int_{G} \mathcal{R}_{r-k}(t)f^{(r)}(x + t)\,dt - Tf(x)\right)\right| \\ [10pt]
   		&\leqslant & \displaystyle\left|Tf(x)\right| + \left|(-1)^r\int_{G} \left(\mathcal{R}_{r-k}(t) - \Omega^{[r-1]}(t)\right)f^{(r)}(x+t)\,dt\right| \\ [10pt]
    	& \leqslant & \displaystyle \bigvee\limits_{G}\Omega\cdot\left\|f\right\|_{L_{\infty}(G)} + \left\|\mathcal{R}_{r-k} - \Omega^{[r-1]}\right\|_{L_1(G)}\cdot\left\|f^{(r)}\right\|_{L_{\infty}(G)},
	\end{array}
\]
which implies the desired inequality~(\ref{Main_Inequality_uniform}) in case $h=1$:
\begin{equation}
\label{Main_Inequality_uniform_h=1}
	\left\|D^{k}_{-}f\right\|_{L_{\infty}(G)} \leqslant \bigvee\limits_{G} \Omega\cdot \|f\|_{L_{\infty}(G)} + \left\|\mathcal{R}_{r-k} - \Omega^{[r-1]}\right\|_{L_1(G)} \cdot\left\|f^{(r)}\right\|_{L_{\infty}(G)}.
\end{equation}

Next, we assume that there exists a function $\Phi\in W^{r}_{\infty,\infty}(G)$ satisfying equalities~(\ref{Stiltjes}) and~(\ref{Holder_uniform}). Due to Proposition~\ref{Prop:continuityGeneral} the derivative $D^{k}_{-}\Phi$ is continuous on $G$. Hence, taking into account equalities~(\ref{Stiltjes}) and~(\ref{Holder_uniform}) we have
\[
	\begin{array}{l}
		\displaystyle\left\|D^{k}_{-}\Phi\right\|_{L_{\infty}(G)} \geqslant \displaystyle\left|D^{k}_{-}\Phi(0)\right| \\[10pt]
		\qquad= \displaystyle \left|\int_G \Phi(x)\,d\Omega(x) + (-1)^{r} \int_G \left(\mathcal{R}_{r-k}(x) - \Omega^{[r-1]}(x)\right)\Phi^{(r)}(x)\,dx\right|\\ [10 pt]
		\qquad\geqslant \displaystyle \bigvee_{G}\Omega\cdot \|\Phi\|_{L_{\infty}(G)} + \left\|\mathcal{R}_{r-k} - \Omega^{[r-1]}\right\|_{L_1(G)}\cdot \left\|\Phi^{(r)}\right\|_{L_{\infty}(G)}.
	\end{array}
\]
Therefore, the statement of the theorem is proved in case $h=1$.

Now, we let $h>0$ and $f\in L^r_{\infty,\infty}(G)$ be arbitrary, and consider the function $f_h(x) := f(x/h)$, $x\in G$. Evidently, $f_h \in L^r_{\infty,\infty}(G)$ and by substituting $f_h$ into~(\ref{Main_Inequality_uniform_h=1}) we derive inequality~(\ref{Main_Inequality_uniform}). Clearly, $\Phi_h$ turns~(\ref{Main_Inequality_uniform}) into equality.
\end{proof}

\subsection{The Kolmogorov type inequalities for the Marchaud fractional derivatives: case of norms in an ideal lattice}
\label{SubSec:ideal}

Let us generalize Theorem~\ref{Main_Theorem_uniform} on the case of Kolmogorov type inequalities inequalities between the norms of the Marchaud fractional derivative of a function, the function itself and its higher order derivative in an ideal lattice.

\begin{thm}
\label{Kolmogorov_Stein}
Let $G = \mathbb{R}$ or $G = \mathbb{R}_+$, $E$ be a semi shift-invariant lattice on $G$, $r\in\mathbb{N}$ and $k\in(0,r)\setminus\mathbb{N}$. Let also $\Omega\in V(G)$ be such that $\left(\mathcal{R}_{r-k} - \Omega^{[r-1]}\right)\in L_1(G)$ and relation~(\ref{relation}) hold true for every $f\in L_{E,E}^r(G)$. Then for every $f\in L^r_{E,E}(G)$,
\[
	\left\|D^k_-f\right\|_E \leqslant \bigvee_G\Omega\cdot \|f\|_E + \left\|\mathcal{R}_{r-k} - \Omega^{[r-1]}\right\|_{L_1(G)}\cdot \left\|f^{(r)}\right\|_{E}.
\]
\end{thm}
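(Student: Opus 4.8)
The plan is to mimic the proof of Theorem~\ref{Main_Theorem_uniform} (the case $h=1$) but with the uniform norm replaced throughout by the norm in the ideal lattice $E$, using its defining order property and semi shift-invariance. First I would fix $f\in L^r_{E,E}(G)$ and recall from Proposition~\ref{Prop:existence_and_integral_representation} (or the general results of Section~\ref{Sec:aux}) that $D^k_-f(x)$ exists pointwise and admits the integral representation~(\ref{integral_representationGeneral}). The hypothesis says relation~(\ref{relation}) holds for every $f\in L^r_{E,E}(G)$; shifting its argument gives, for each $x\in G$,
\[
	D^k_-f(x) = \int_G f(x+t)\,d\Omega(t) + (-1)^r\int_G\left(\mathcal{R}_{r-k}(t) - \Omega^{[r-1]}(t)\right)f^{(r)}(x+t)\,dt.
\]
So $D^k_-f$ is the sum of two terms, and I would bound the $E$-norm of each.

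For the first term, set $Tg(x):=\int_G g(x+t)\,d\Omega(t)$. I would show $\|Tg\|_E \leqslant \bigvee_G\Omega\cdot\|g\|_E$: since $T g$ is a ``Stieltjes average'' of shifts $g(\cdot+t)$ against the measure $d\Omega$ of total variation $\bigvee_G\Omega$, its modulus is pointwise dominated (a.e.) by $\bigvee_G\Omega\cdot\sup_t|g(\cdot+t)|$, and invoking the ideal-lattice property together with semi shift-invariance of $E$ — which controls $\|g(\cdot+t)\|_E$ by $\|g\|_E$ — yields the bound. (If a cleaner route is needed one can approximate $\Omega$ by step functions, use finite convex combinations of shifts, apply the triangle inequality and convexity of the norm, and pass to the limit.) For the second term, write $\psi:=\mathcal{R}_{r-k}-\Omega^{[r-1]}\in L_1(G)$ and note $\big|\int_G\psi(t)f^{(r)}(x+t)\,dt\big|\leqslant \int_G|\psi(t)|\,|f^{(r)}(x+t)|\,dt$ a.e. in $x$; again by the ideal-lattice and semi shift-invariance properties (applied to the $L_1$-average of the shifts $|f^{(r)}(\cdot+t)|$), the $E$-norm of this is at most $\|\psi\|_{L_1(G)}\cdot\|f^{(r)}\|_E$. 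Combining the two estimates via the triangle inequality in $E$ gives the claimed inequality.

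The main obstacle I anticipate is making the two averaging estimates rigorous in the generality of an ideal lattice: one must justify that $\|\int_G g(\cdot+t)\,d\nu(t)\|_E \leqslant \|\nu\|\cdot\|g\|_E$ for a (signed) finite measure $\nu$ and $\|\int_G|\psi(t)|\,|h(\cdot+t)|\,dt\|_E \leqslant \|\psi\|_{L_1}\cdot\|h\|_E$ — i.e. a vector-valued ``norm of an integral $\leqslant$ integral of norms'' (Minkowski-type) inequality in $E$, which is automatic for $L_p$ but needs the monotonicity (ideal) axiom plus shift behaviour here, and care about measurability/convergence of the integrals. Once that lemma is in hand (either cited from \cite{Krein} or proved by a step-function approximation argument exploiting convexity and lower semicontinuity of $\|\cdot\|_E$), the rest is a direct repetition of the $h=1$ part of the proof of Theorem~\ref{Main_Theorem_uniform}. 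Note also that, unlike Theorem~\ref{Main_Theorem_uniform}, this statement asserts no sharpness, so no extremal function $\Phi$ needs to be constructed — which keeps the proof short.
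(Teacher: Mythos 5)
Your proposal is correct and is essentially the paper's own argument: the paper likewise writes $D^k_-f(x)$ via the shifted relation~(\ref{relation}) as the sum of the Stieltjes convolution with $\Omega$ and the $L_1$-convolution with $\mathcal{R}_{r-k}-\Omega^{[r-1]}$, and then bounds each term's $E$-norm by invoking exactly the generalized Minkowski inequality from~\cite{Krein} together with semi shift-invariance, which is the lemma you correctly identify as the crux.
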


An immediate consequence of Theorem~\ref{Kolmogorov_Stein} is the following
\begin{cor}
\label{Kolmogorov_Stein_Ls}
Let $G = \mathbb{R}$ or $G = \mathbb{R}_+$, $1\leqslant s\leqslant \infty$, $r\in\mathbb{N}$ and $k\in(0,r)\setminus\mathbb{N}$. Let also a function $\Omega\in V(G)$ be such that $\left(\mathcal{R}_{r-k} - \Omega^{[r-1]}\right)\in L_1(G)$ and relation~(\ref{relation}) holds true for every $f\in L_{s,s}^r(G)$. Then for every $f\in L^r_{s,s}(G)$ and $h>0$,
\[
	\left\|D^k_-f\right\|_{L_s(G)} \leqslant h^{-k}\bigvee_G\Omega\cdot \|f\|_{L_s(G)} + h^{r-k} \left\|\mathcal{R}_{r-k} - \Omega^{[r-1]}\right\|_{L_1(G)}\cdot \left\|f^{(r)}\right\|_{L_s(G)}.
\]
Moreover, if a function $\Phi \in W^r_{\infty,\infty}(G)$ satisfies~(\ref{Stiltjes}) and~(\ref{Holder_uniform}) then for $f\in L_{s,s}^r(G)$,
\begin{equation}
\label{Stein_inequality_Ls}
	\left\|D^k_-f\right\|_{L_s(G)} \leqslant \frac{\left\|D^k_-\Phi\right\|_{L_{\infty}(G)}}{\left\|\Phi\right\|_{L_{\infty}(G)}^{1-k/r}}\,\|f\|_{L_s(G)} ^{1-k/r}\left\|f^{(r)}\right\|_{L_s(G)}^{k/r}.
\end{equation}
\end{cor}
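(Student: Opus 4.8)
The plan is to obtain this corollary as a direct specialization of Theorem~\ref{Kolmogorov_Stein} followed by a standard scaling-and-optimization argument. First I would note that for $1\leqslant s\leqslant\infty$ the space $E=L_s(G)$ is a semi shift-invariant ideal lattice on $G$: shift-invariance is exact when $G=\mathbb{R}$ and only a contraction when $G=\mathbb{R}_+$ (since translating by a positive amount can move mass off the half-line), which is precisely what the definition of ``semi shift-invariant'' allows. Also $L^r_{E,E}(G)=L^r_{s,s}(G)$ by the very definitions of these classes. Hence, under the stated hypotheses on $\Omega$, Theorem~\ref{Kolmogorov_Stein} applies with $E=L_s(G)$ and yields, for every $f\in L^r_{s,s}(G)$, the inequality
\[
	\left\|D^k_-f\right\|_{L_s(G)} \leqslant \bigvee_G\Omega\cdot \|f\|_{L_s(G)} + \left\|\mathcal{R}_{r-k} - \Omega^{[r-1]}\right\|_{L_1(G)}\cdot \left\|f^{(r)}\right\|_{L_s(G)}.
\]
This is the case $h=1$ of the claimed inequality.

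Next I would introduce the dilation $f_h(x):=f(x/h)$ for $h>0$. Then $f_h\in L^r_{s,s}(G)$ whenever $f$ is, and the norms scale by the familiar powers of $h$: $\|f_h\|_{L_s(G)}=h^{1/s}\|f\|_{L_s(G)}$, $\|f_h^{(r)}\|_{L_s(G)}=h^{1/s-r}\|f^{(r)}\|_{L_s(G)}$, and — using the integral representation~(\ref{integral_representationGeneral}) from Proposition~\ref{Prop:continuity1}, or directly from definition~(\ref{Marchaud_derivative}) via the substitution $t\mapsto t/h$ — one gets $D^k_-f_h(x)=h^{-k}(D^k_-f)(x/h)$, so that $\|D^k_-f_h\|_{L_s(G)}=h^{1/s-k}\|D^k_-f\|_{L_s(G)}$. (For $s=\infty$ the factor $h^{1/s}$ is simply $1$, and the argument is unchanged.) Substituting $f_h$ into the $h=1$ inequality, the common factor $h^{1/s}$ cancels and one is left exactly with
\[
	\left\|D^k_-f\right\|_{L_s(G)} \leqslant h^{-k}\bigvee_G\Omega\cdot \|f\|_{L_s(G)} + h^{r-k} \left\|\mathcal{R}_{r-k} - \Omega^{[r-1]}\right\|_{L_1(G)}\cdot \left\|f^{(r)}\right\|_{L_s(G)},
\]
which is the first assertion of the corollary.

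For the second assertion, I would minimize the right-hand side over $h>0$. Writing it as $ah^{-k}+bh^{r-k}$ with $a=\bigvee_G\Omega\cdot\|f\|_{L_s(G)}$ and $b=\|\mathcal{R}_{r-k}-\Omega^{[r-1]}\|_{L_1(G)}\cdot\|f^{(r)}\|_{L_s(G)}$, elementary calculus gives the optimal $h=\big(ka/((r-k)b)\big)^{1/r}$ and a minimum of the form $C\,a^{1-k/r}b^{k/r}$ for an explicit constant $C=C(r,k)$ depending only on $r$ and $k$. This produces a multiplicative inequality
\[
	\left\|D^k_-f\right\|_{L_s(G)} \leqslant C\Big(\bigvee_G\Omega\Big)^{1-k/r}\left\|\mathcal{R}_{r-k}-\Omega^{[r-1]}\right\|_{L_1(G)}^{k/r}\,\|f\|_{L_s(G)}^{1-k/r}\left\|f^{(r)}\right\|_{L_s(G)}^{k/r}.
\]
It remains to identify the constant with the stated ratio $\|D^k_-\Phi\|_{L_\infty(G)}/\|\Phi\|_{L_\infty(G)}^{1-k/r}$. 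This is where the hypotheses~(\ref{Stiltjes}) and~(\ref{Holder_uniform}) on $\Phi$ enter: exactly as in the proof of Theorem~\ref{Main_Theorem_uniform} (the chain of inequalities leading to $\|D^k_-\Phi\|_{L_\infty(G)}\geqslant\bigvee_G\Omega\cdot\|\Phi\|_{L_\infty(G)}+\|\mathcal{R}_{r-k}-\Omega^{[r-1]}\|_{L_1(G)}\cdot\|\Phi^{(r)}\|_{L_\infty(G)}$, together with the reverse inequality~(\ref{Main_Inequality_uniform_h=1})), these equalities force
\[
	\left\|D^k_-\Phi\right\|_{L_\infty(G)}=\bigvee_G\Omega\cdot\|\Phi\|_{L_\infty(G)}+\left\|\mathcal{R}_{r-k}-\Omega^{[r-1]}\right\|_{L_1(G)}\cdot\left\|\Phi^{(r)}\right\|_{L_\infty(G)},
\]
and moreover, since~(\ref{Main_Inequality_uniform_h=1}) becomes an equality for $\Phi$ for every $h$, the Kolmogorov-type inequality of Corollary~\ref{Main_Corollary} is sharp on $\Phi$. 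Tracking the optimization constant through Corollary~\ref{Main_Corollary} shows $C\,(\bigvee_G\Omega)^{1-k/r}\|\mathcal{R}_{r-k}-\Omega^{[r-1]}\|_{L_1(G)}^{k/r}=\|D^k_-\Phi\|_{L_\infty(G)}\big/\|\Phi\|_{L_\infty(G)}^{1-k/r}$, and substituting this into the displayed inequality gives~(\ref{Stein_inequality_Ls}). The only mildly delicate point is the scaling identity $D^k_-f_h=h^{-k}(D^k_-f)(\cdot/h)$ and the verification that $L_s(G)$ is semi shift-invariant on the half-line; both are routine but should be stated explicitly, and I expect no serious obstacle beyond bookkeeping of the dilation exponents.
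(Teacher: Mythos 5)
Your proposal is correct and follows exactly the route the paper intends (it states the corollary as an immediate consequence of Theorem~\ref{Kolmogorov_Stein}, with the dilation and the minimization over $h$ left implicit): apply the theorem with $E=L_s(G)$, rescale, and identify the optimized constant via the equality case for $\Phi$ as in Corollary~\ref{Main_Corollary}. The only cosmetic slip is that the substitution $f_h(x)=f(x/h)$ actually produces the factors $h^{k}$ and $h^{k-r}$, which become the stated $h^{-k}$ and $h^{r-k}$ only after relabelling $h\mapsto 1/h$; since the inequality is asserted for all $h>0$, this is immaterial.
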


Evidently, inequality~(\ref{Stein_inequality_Ls}) is sharp for $s=\infty$. In Subsection~\ref{SubSec:r=1} we shall show that this inequality is also sharp when $s=1$, $r=1$ and $G = \mathbb{R}$. For integer values of $k$ and $G=\mathbb{R}$, inequality~(\ref{Stein_inequality_Ls}) is known as the Stein inequality~\cite{Stein} (see also~\cite{Bab_Pich_91,KorLigBab}).

\begin{proof}[The proof of Theorem~\ref{Kolmogorov_Stein}]
Using Proposition~\ref{Prop:existence_and_integral_representation} and the generalized Minkowskii inequality~(see~\cite{Krein}) for every function $f\in L^r_{E,E}(G)$, we have
\[
	\begin{array}{rcl}
		\displaystyle \left\|D^k_-f\right\|_E & \leqslant & \displaystyle\left\|\int_G f(x)\,d\Omega(x)\right\|_E + \left\|\int_G \left(\mathcal{R}_{r-k}(x) - \Omega^{[r-1]}(x)\right) f^{(r)}(x)\,dx\right\|_E \\
		& \leqslant & \displaystyle \bigvee_G\Omega\cdot \|f\|_E + \left\|\mathcal{R}_{r-k} - \Omega^{[r-1]}\right\|_{L_1(G)}\cdot \left\|f^{(r)}\right\|_{E}.
	\end{array}
\]
The proof is finished.
\end{proof}

\subsection{The Kolmogorov type inequalities for the Marchaud fractional derivatives: case when the norm of the higher order derivative is considered in an ideal lattice}
\label{SubSec:mixed}


In this Subsection we generalize the results of Subsection~\ref{SubSec:uniform} on the case when the norm of the higher order derivative is taken in an ideal lattice. For convenience, we split the subsection into two parts: first we present results concerning the case when extremal function in the Kolmogorov type inequality (i.e. turning it into equality) exists and then we present results concerning the case when extremal function in the Kolmogorov type inequality does not exist. For the discussion of existence of extremal function in inequality~(\ref{multiplicative_inequality}) for integer order derivatives we refer the reader to the paper~\cite{Bus_Mag_Tih_82} and references therein.


\subsubsection{Case of existence of extremal function in the Kolmogorov type inequality}

For an ideal lattice $E$ on $G$ and $r\in\mathbb{N}$, we set
\[
	W^{r}_{\infty,E}(G) = \left\{f\in L_{\infty,E}^{r}(G):\left\|f^{(r)}\right\|_{E}\leqslant 1 \right\}.
\]

\begin{thm}
\label{Main_Theorem}
Let $G = \mathbb{R}_+$ or $G = \mathbb{R}$, $r\in\mathbb{N}$, $k\in(0,r)\setminus\mathbb{N}$, $E$ be an ideal semi shift-invariant lattice on $G$ satisfying conditions~(\ref{limit_condition0}) and~(\ref{limit_condition}), $E^1$ be the associated space to $E$. Also, let a function $\Omega\in V(G)$ be such that $\left(\mathcal{R}_{r-k} - \Omega^{[r-1]}\right) \in E^1$ and relation~(\ref{relation}) hold true for every $f\in L^{r}_{\infty,E}(G)$. Then for every $f\in L_{\infty,E}^{r}(G)$,
\begin{equation}
\label{Main_Inequality}
	\left\|D^{k}_{-}f\right\|_{L_{\infty}(G)} \leqslant \bigvee\limits_{G} \Omega\cdot \|f\|_{L_{\infty}(G)} + \left\|\mathcal{R}_{r-k} - \Omega^{[r-1]}\right\|_{E^1} \cdot\left\|f^{(r)}\right\|_{E}.
\end{equation}
Furthermore, if a function $\Phi\in W^{r}_{\infty,E}(G)$ satisfies equalities~(\ref{Stiltjes}) and
\begin{equation}
\label{Holder}
	(-1)^{r}\int_G\left(\mathcal{R}_{r-k}(x) - \Omega^{[r-1]}(x)\right)\Phi^{(r)}(x)\,dx = \left\|\mathcal{R}_{r-k} - \Omega^{[r-1]}\right\|_{E^1}
\end{equation}
then inequality~(\ref{Main_Inequality}) is sharp and $\Phi$ turns~(\ref{Main_Inequality}) into equality.
\end{thm}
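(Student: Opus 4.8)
The plan is to reproduce the scheme of the proof of Theorem~\ref{Main_Theorem_uniform}, replacing the $L_1(G)$--$L_\infty(G)$ H\"older duality by the duality between the ideal lattice $E$ and its associated space $E^1$. Since $E$ satisfies both~(\ref{limit_condition0}) and~(\ref{limit_condition}), Proposition~\ref{Prop:continuityGeneral} applies and guarantees that $D^k_-f$ is well defined and continuous on $G$ for every $f\in L^r_{\infty,E}(G)$; in particular $\|D^k_-f\|_{L_\infty(G)}=\sup_{x\in G}|D^k_-f(x)|$. Likewise Proposition~\ref{Prop:integral_representationGeneral}, which needs only~(\ref{limit_condition0}), supplies the representation~(\ref{integral_representationGeneral}). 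For a fixed $f\in L^r_{\infty,E}(G)$ and $x\in G$ I would first observe that, since $E$ is semi shift-invariant, the translate $f(\cdot+x)$ again lies in $L^r_{\infty,E}(G)$ with $\|f(\cdot+x)\|_{L_\infty(G)}\le\|f\|_{L_\infty(G)}$ and $\|f^{(r)}(\cdot+x)\|_E\le\|f^{(r)}\|_E$, and that $D^k_-[f(\cdot+x)](0)=D^k_-f(x)$ directly from definition~(\ref{Marchaud_derivative}). Applying hypothesis~(\ref{relation}) to $f(\cdot+x)$ therefore gives
\[
	D^k_-f(x)=\int_G f(x+t)\,d\Omega(t)+(-1)^r\int_G\bigl(\mathcal{R}_{r-k}(t)-\Omega^{[r-1]}(t)\bigr)f^{(r)}(x+t)\,dt .
\]

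Next I would estimate the two terms. The first is bounded by $\bigvee_G\Omega\cdot\|f(\cdot+x)\|_{L_\infty(G)}\le\bigvee_G\Omega\cdot\|f\|_{L_\infty(G)}$, exactly as the operator $T$ in the proof of Theorem~\ref{Main_Theorem_uniform} is bounded with $\|T\|=\bigvee_G\Omega$. For the second term, since by assumption $\mathcal{R}_{r-k}-\Omega^{[r-1]}\in E^1$ while $f^{(r)}(\cdot+x)\in E$, the generalized H\"older inequality for ideal lattices (see~\cite{Krein}) yields the bound $\|\mathcal{R}_{r-k}-\Omega^{[r-1]}\|_{E^1}\cdot\|f^{(r)}(\cdot+x)\|_E\le\|\mathcal{R}_{r-k}-\Omega^{[r-1]}\|_{E^1}\cdot\|f^{(r)}\|_E$. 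Summing the two estimates and taking the supremum over $x\in G$ produces precisely~(\ref{Main_Inequality}).

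For the sharpness part I would test~(\ref{Main_Inequality}) on the function $\Phi$. Evaluating~(\ref{relation}) with $f=\Phi$ at $x=0$ and using~(\ref{Stiltjes}) and~(\ref{Holder}) gives
\[
	D^k_-\Phi(0)=\int_G\Phi(t)\,d\Omega(t)+(-1)^r\int_G\bigl(\mathcal{R}_{r-k}(t)-\Omega^{[r-1]}(t)\bigr)\Phi^{(r)}(t)\,dt=\bigvee_G\Omega\cdot\|\Phi\|_{L_\infty(G)}+\bigl\|\mathcal{R}_{r-k}-\Omega^{[r-1]}\bigr\|_{E^1}.
\]
Since $\Phi\in W^r_{\infty,E}(G)$ one has $\|\Phi^{(r)}\|_E\le 1$, so the right-hand side of~(\ref{Main_Inequality}) taken at $f=\Phi$ is at most $\bigvee_G\Omega\cdot\|\Phi\|_{L_\infty(G)}+\|\mathcal{R}_{r-k}-\Omega^{[r-1]}\|_{E^1}=D^k_-\Phi(0)\le\|D^k_-\Phi\|_{L_\infty(G)}$, which is its left-hand side. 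Combined with~(\ref{Main_Inequality}) itself, this forces all these quantities to coincide, so~(\ref{Main_Inequality}) is sharp and $\Phi$ turns it into an equality.

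I do not expect a serious obstacle here: the argument is essentially mechanical once Propositions~\ref{Prop:continuityGeneral} and~\ref{Prop:integral_representationGeneral} and the H\"older inequality in $E$ are available. The only points that require a little care are the verification that $f(\cdot+x)\in L^r_{\infty,E}(G)$ with non-increased norms --- so that~(\ref{relation}), stated at the origin, may legitimately be transported to an arbitrary point of $G$ --- and the remark that $\bigvee_G\Omega\cdot\|\Phi\|_{L_\infty(G)}\ge 0$, which is what allows one to drop the absolute value around $D^k_-\Phi(0)$ in the last step.
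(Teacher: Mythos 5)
Your proposal is correct and follows essentially the same route as the paper: the paper proves Theorem~\ref{Main_Theorem} by repeating the argument of Theorem~\ref{Main_Theorem_uniform} with $h=1$, replacing the $L_1$--$L_\infty$ estimate by the bound $\left|\int_G(\mathcal{R}_{r-k}(t)-\Omega^{[r-1]}(t))f^{(r)}(x+t)\,dt\right|\leqslant\|\mathcal{R}_{r-k}-\Omega^{[r-1]}\|_{E^1}\cdot\|f^{(r)}(x+\cdot)\|_E\leqslant\|\mathcal{R}_{r-k}-\Omega^{[r-1]}\|_{E^1}\cdot\|f^{(r)}\|_E$, and establishing sharpness by evaluating at $\Phi$ at the origin exactly as you do. Your explicit justification of transporting relation~(\ref{relation}) to an arbitrary point via semi shift-invariance is a point the paper leaves implicit, but it is the same argument.
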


We remark that Theorem~\ref{Main_Theorem} can be generalized as follows.

\begin{thm}
\label{Theorem_infty_F_E}
Let $G = \mathbb{R}$ or $G = \mathbb{R}_+$, $r\in\mathbb{N}$, $k\in(0,r)\setminus\mathbb{N}$, $E$ be a semi-shift invariant lattice on $G$ that satisfy conditions~(\ref{limit_condition}), $E^1$ be the associated space to $E$, $F$ be an ideal lattice such that its associated space $F^1$ contains the function $\chi_{(0,1)}$. Let also a locally absolutely-continuous on $G$ function $\Omega\in V(G)$ be such that $\left(\mathcal{R}_{r-k} - \Omega^{[r-1]}\right) \in E^1$ and relation~(\ref{relation}) holds true for every $f\in L^{r}_{F,E}(G)$. Then for every $f\in L^r_{F,E}(G)$,
\[
	\left\|D^{k}_{-}f\right\|_{L_{\infty}(G)} \leqslant \left\|\Omega'\right\|_{F^1} \cdot\|f\|_{F} + \left\|\mathcal{R}_{r-k} - \Omega^{[r-1]}\right\|_{E^1} \cdot \left\|f^{(r)}\right\|_{E}.
\]
\end{thm}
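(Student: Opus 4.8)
The plan is to adapt, almost verbatim, the proof of Theorem~\ref{Main_Theorem_uniform} in the case $h=1$, with the elementary $L_{\infty}$--$L_{1}$ estimates replaced by the generalized H\"older inequality $\left|\int_{G}g\varphi\,dt\right|\le\|g\|_{E}\,\|\varphi\|_{E^{1}}$ between an ideal lattice and its associated space (and its analogue for the pair $F$, $F^{1}$). Fix $f\in L^{r}_{F,E}(G)$. By hypothesis relation~(\ref{relation}) holds for every element of $L^{r}_{F,E}(G)$, so $D^{k}_{-}f(0)$ is a well-defined number; existence of $D^{k}_{-}f$ at every point of $G$, together with the integral representation~(\ref{integral_representationGeneral}), is in any event supplied by Proposition~\ref{Prop:existence_and_integral_representation}. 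Thus it suffices to bound $\left|D^{k}_{-}f(x)\right|$ uniformly for $x\in G$.

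First I would move relation~(\ref{relation}) from the origin to an arbitrary point $x_{0}\in G$. Since $F$ and $E$ are semi shift-invariant, the translate $f(\cdot+x_{0})$ again lies in $L^{r}_{F,E}(G)$ and $\|f(\cdot+x_{0})\|_{F}\le\|f\|_{F}$, $\|f^{(r)}(\cdot+x_{0})\|_{E}\le\|f^{(r)}\|_{E}$; on the half-line only shifts by $x_{0}\ge0$ occur, which is precisely the admissible direction, and on $\mathbb{R}$ these norms are unchanged. From definition~(\ref{Marchaud_derivative}) the Marchaud derivative commutes with translation, i.e. $D^{k}_{-}\bigl(f(\cdot+x_{0})\bigr)(0)=D^{k}_{-}f(x_{0})$. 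Applying~(\ref{relation}) to $f(\cdot+x_{0})$, and using that $\Omega$ is locally absolutely continuous so that $d\Omega(t)=\Omega'(t)\,dt$, I get
\[
D^{k}_{-}f(x_{0})=\int_{G}f(x_{0}+t)\,\Omega'(t)\,dt+(-1)^{r}\int_{G}\bigl(\mathcal{R}_{r-k}(t)-\Omega^{[r-1]}(t)\bigr)f^{(r)}(x_{0}+t)\,dt .
\]

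Now I estimate the two terms. For the first, by the definition of $\|\cdot\|_{F^{1}}$, one has $\bigl|\int_{G}f(x_{0}+t)\Omega'(t)\,dt\bigr|\le\|f(\cdot+x_{0})\|_{F}\,\|\Omega'\|_{F^{1}}\le\|f\|_{F}\,\|\Omega'\|_{F^{1}}$ (if $\Omega'\notin F^{1}$ the asserted inequality is vacuous). For the second, since $\mathcal{R}_{r-k}-\Omega^{[r-1]}\in E^{1}$ by assumption, the generalized H\"older inequality for $E$, $E^{1}$ and semi shift-invariance of $E$ give $\bigl|\int_{G}(\mathcal{R}_{r-k}(t)-\Omega^{[r-1]}(t))f^{(r)}(x_{0}+t)\,dt\bigr|\le\|\mathcal{R}_{r-k}-\Omega^{[r-1]}\|_{E^{1}}\,\|f^{(r)}(\cdot+x_{0})\|_{E}\le\|\mathcal{R}_{r-k}-\Omega^{[r-1]}\|_{E^{1}}\,\|f^{(r)}\|_{E}$. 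Summing the two bounds and taking the supremum over $x_{0}\in G$ yields the claimed inequality.

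The actual computations are routine; the points that need care --- and where I expect the only real difficulty --- are structural. One is checking that relation~(\ref{relation}) is legitimately applicable to the translate $f(\cdot+x_{0})$, which relies on the one-sided shift-invariance of $F$ and $E$ in the half-line case and on $f(\cdot+x_{0})\in L^{r}_{F,E}(G)$. A second is the passage from the Stieltjes measure $d\Omega$ to $\Omega'(t)\,dt$: this is exactly the reason $\Omega$ is required here to be locally absolutely continuous, whereas in Theorem~\ref{Main_Theorem_uniform} one used only $\bigvee_{G}\Omega=\|\Omega'\|_{L_{1}(G)}$ with $F=L_{\infty}$. A third is the mere existence of $D^{k}_{-}f$ when $f$ lies in the lattice $F$ rather than in $L_{\infty}$, which is guaranteed by Proposition~\ref{Prop:existence_and_integral_representation}. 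No extremal function or approximation-by-bounded-operators argument enters, since only the inequality --- not its sharpness --- is asserted.
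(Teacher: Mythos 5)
Your proof is correct and is essentially the argument the paper intends: the paper states this theorem without proof, as a generalization of Theorem~\ref{Main_Theorem}, whose proof is exactly your scheme of translating relation~(\ref{relation}) to an arbitrary point $x_0$ and replacing the $L_\infty$--$L_1$ pairings by the H\"older inequality for the pairs $\left(F,F^1\right)$ and $\left(E,E^1\right)$. Your explicit appeal to semi shift-invariance of $F$ (needed both for the estimate of the translated term and for invoking Proposition~\ref{Prop:existence_and_integral_representation}) supplies a hypothesis that the paper's statement leaves implicit, so no further changes are needed.
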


For the spaces $L_s(G)$, $1 < s \leqslant \infty$, we obtain the following consequence.

\begin{cor}
\label{Main_Corollary_Ls}
Let $G = \mathbb{R}$ or $G = \mathbb{R}_+$, $1 < s \leqslant \infty$, $s' = s/(s-1)$, $r\in\mathbb{N}$ and $k\in\left(0,r-1/s\right)\setminus\mathbb{N}$. Let also a function $\Omega\in V(G)$ be such that $\left(\mathcal{R}_{r-k} - \Omega^{[r-1]}\right) \in L_{s'}(G)$ and relation~(\ref{relation}) holds true for every $f\in L^r_{\infty,s}(G)$. If a function $\Phi\in W^r_{\infty,s}(G)$ satisfies equality~(\ref{Stiltjes}) and relation
\[
  (-1)^{r}\int_G\left(\mathcal{R}_{r-k}(x) - \Omega^{[r-1]}(x)\right)\Phi^{(r)}(x)\,dx = \left\|\mathcal{R}_{r-k} - \Omega^{[r-1]}\right\|_{L_{s'}(G)}
\]
then for every $f\in L_{\infty,s}^r(G)$ and $h>0$, there hold true sharp inequalities
\begin{equation}
\label{Kolmogorov_inequality_Ls_additive}
	\begin{array}{l}
		\displaystyle\left\|D^{k}_{-}f\right\|_{L_{\infty}(G)} \leqslant h^{-k}\bigvee\limits_{G} \Omega\cdot \|f\|_{L_{\infty}(G)}\\
		\qquad\qquad\qquad\qquad+\displaystyle h^{r-k-1/s}\left\|\mathcal{R}_{r-k} - \Omega^{[r-1]}\right\|_{L_{s'}(G)} \cdot\left\|f^{(r)}\right\|_{L_s(G)},
	\end{array}
\end{equation}
and
\begin{equation}
\label{Kolmogorov_inequality_Ls}
	\left\|D^{k}_{-}f\right\|_{L_{\infty}(G)} \leqslant \frac{\left\|D^{k}_{-}\Phi\right\|_{L_{\infty}(G)}}{\left\|\Phi\right\|_{L_{\infty}(G)}^{1-\lambda}}\,\|f\|_{L_{\infty}(G)}^{1-\lambda} \left\|f^{(r)}\right\|_{L_{s}(G)}^{\lambda},\qquad \lambda = \frac{k}{r-1/s}.
\end{equation}
Moreover, the function $\Phi_h(\cdot) := h^{r-1/s}\Phi\left((\cdot)/h\right)$ turns~(\ref{Kolmogorov_inequality_Ls_additive}) and~(\ref{Kolmogorov_inequality_Ls}) into equalities.
\end{cor}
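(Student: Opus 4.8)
The plan is to deduce Corollary~\ref{Main_Corollary_Ls} from Theorem~\ref{Main_Theorem} applied with $E=L_s(G)$, followed by a dilation argument of the same type as in the proof of Theorem~\ref{Main_Theorem_uniform}. First I would record the routine facts that $L_s(G)$ is an ideal semi shift-invariant lattice whose associated space is $E^1=L_{s'}(G)$, and that for $E=L_s(G)$ both conditions~(\ref{limit_condition0}) and~(\ref{limit_condition}) reduce to the single inequality $k<r-1/s$, which holds by hypothesis (this is the remark preceding Proposition~\ref{Prop:continuity}). Hence Theorem~\ref{Main_Theorem} applies and yields, for every $f\in L^r_{\infty,s}(G)$, the ``unit-scale'' inequality
\[
	\left\|D^k_-f\right\|_{L_\infty(G)} \leqslant \bigvee_G\Omega\cdot\|f\|_{L_\infty(G)} + \left\|\mathcal{R}_{r-k}-\Omega^{[r-1]}\right\|_{L_{s'}(G)}\cdot\left\|f^{(r)}\right\|_{L_s(G)}.
\]
Since the hypothesis imposed on $\Phi$ is exactly~(\ref{Stiltjes}) together with~(\ref{Holder}) written for $E=L_s(G)$, the second part of Theorem~\ref{Main_Theorem} guarantees that $\Phi$ turns this inequality into an equality. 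Moreover, reading~(\ref{Holder}) against the bound $\left\|\Phi^{(r)}\right\|_{L_s(G)}\leqslant 1$ and the Hölder inequality forces $\left\|\Phi^{(r)}\right\|_{L_s(G)}=1$, so that
\[
	\bigvee_G\Omega\cdot\|\Phi\|_{L_\infty(G)} + \left\|\mathcal{R}_{r-k}-\Omega^{[r-1]}\right\|_{L_{s'}(G)} = \left\|D^k_-\Phi\right\|_{L_\infty(G)}.
\]

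Next I would perform the dilation. Fixing $h>0$ and $f\in L^r_{\infty,s}(G)$ and applying the unit-scale inequality to $g(x):=f(hx)$, I would use the integral representation of $D^k_-$ supplied by Proposition~\ref{Prop:continuity} and the substitution $u=ht$ to obtain the scaling identities $\|g\|_{L_\infty(G)}=\|f\|_{L_\infty(G)}$, $\left\|g^{(r)}\right\|_{L_s(G)}=h^{r-1/s}\left\|f^{(r)}\right\|_{L_s(G)}$ and $D^k_-g(\cdot)=h^{k}D^k_-f(h\,\cdot)$, the last giving $\left\|D^k_-g\right\|_{L_\infty(G)}=h^{k}\left\|D^k_-f\right\|_{L_\infty(G)}$. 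Substituting these into the unit-scale inequality and dividing through by $h^{k}$ produces~(\ref{Kolmogorov_inequality_Ls_additive}).

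To pass from~(\ref{Kolmogorov_inequality_Ls_additive}) to the multiplicative inequality~(\ref{Kolmogorov_inequality_Ls}) I would not minimise over $h$ explicitly, but for a fixed $f$ with $\|f\|_{L_\infty(G)}>0$ and $\left\|f^{(r)}\right\|_{L_s(G)}>0$ (the remaining cases being trivial) I would substitute into~(\ref{Kolmogorov_inequality_Ls_additive}) the particular value $h$ determined by $h^{r-1/s}=\|f\|_{L_\infty(G)} / \bigl(\|\Phi\|_{L_\infty(G)}\,\left\|f^{(r)}\right\|_{L_s(G)}\bigr)$. Using $\lambda=k/(r-1/s)$ and $1-\lambda=(r-1/s-k)/(r-1/s)$, a short manipulation of exponents shows that the right-hand side of~(\ref{Kolmogorov_inequality_Ls_additive}) then collapses to
\[
	\|f\|_{L_\infty(G)}^{1-\lambda}\left\|f^{(r)}\right\|_{L_s(G)}^{\lambda}\cdot\|\Phi\|_{L_\infty(G)}^{-(1-\lambda)}\left(\bigvee_G\Omega\cdot\|\Phi\|_{L_\infty(G)} + \left\|\mathcal{R}_{r-k}-\Omega^{[r-1]}\right\|_{L_{s'}(G)}\right),
\]
and by the identity from the first step the bracket equals $\left\|D^k_-\Phi\right\|_{L_\infty(G)}$, which is precisely the constant in~(\ref{Kolmogorov_inequality_Ls}). (It is worth noting in passing that this choice of $h$ is in fact the minimiser of the right-hand side of~(\ref{Kolmogorov_inequality_Ls_additive}): the extremality of $\Phi$ in the unit-scale inequality, combined with the unit-scale inequality being an equality for $\Phi$ only when $h=1$, forces the balance relation $k\bigvee_G\Omega\cdot\|\Phi\|_{L_\infty(G)}=(r-1/s-k)\left\|\mathcal{R}_{r-k}-\Omega^{[r-1]}\right\|_{L_{s'}(G)}$, so no sharpness is lost.)

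Finally, for sharpness I would test $\Phi_h(\cdot):=h^{r-1/s}\Phi\left((\cdot)/h\right)$. The same scaling identities, now with the extra prefactor $h^{r-1/s}$, give $\|\Phi_h\|_{L_\infty(G)}=h^{r-1/s}\|\Phi\|_{L_\infty(G)}$, $\left\|\Phi_h^{(r)}\right\|_{L_s(G)}=\left\|\Phi^{(r)}\right\|_{L_s(G)}=1$ (hence $\Phi_h\in W^r_{\infty,s}(G)$) and $\left\|D^k_-\Phi_h\right\|_{L_\infty(G)}=h^{r-1/s-k}\left\|D^k_-\Phi\right\|_{L_\infty(G)}$. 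Inserting $\Phi_h$ into the right-hand sides of~(\ref{Kolmogorov_inequality_Ls_additive}) and~(\ref{Kolmogorov_inequality_Ls}) and using once more that $\Phi$ realises equality in the unit-scale inequality, one checks that both right-hand sides reduce to $h^{r-1/s-k}\left\|D^k_-\Phi\right\|_{L_\infty(G)}=\left\|D^k_-\Phi_h\right\|_{L_\infty(G)}$, so $\Phi_h$ turns both inequalities into equalities. I do not expect a serious obstacle here: the only delicate points are the careful bookkeeping of the dilation exponents of $D^k_-$ (which relies on the integral representation of Proposition~\ref{Prop:continuity}, valid exactly because $k<r-1/s$) and the observation that~(\ref{Holder}) forces $\left\|\Phi^{(r)}\right\|_{L_s(G)}=1$; everything else is routine computation.
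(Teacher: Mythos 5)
Your proposal is correct and follows essentially the same route as the paper: both reduce the statement to Theorem~\ref{Main_Theorem} applied with $E=L_s(G)$ (noting that conditions~(\ref{limit_condition0}) and~(\ref{limit_condition}) amount to $k<r-1/s$, and that $E^1=L_{s'}(G)$), then introduce the parameter $h$ by dilation and optimise in $h$ to pass from~(\ref{Kolmogorov_inequality_Ls_additive}) to~(\ref{Kolmogorov_inequality_Ls}). The only cosmetic difference is that you dilate the function $f$ (via $g(x)=f(hx)$ and the scaling law $D^k_-g(\cdot)=h^kD^k_-f(h\,\cdot)$), whereas the paper dilates the kernel, replacing $\Omega$ by $\Omega_h(x)=h^{-k}\Omega(x/h)$; the verification that $\Phi_h$ is extremal is the same in both.
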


We remark that Theorems~3.1.2 and 3.2.2~\cite{MBDK} are concretization of Corollary~\ref{Main_Corollary_Ls}. 

\begin{proof}[The proof of Theorem~\ref{Main_Theorem}]
The proof is similar to the proof of Theorem~\ref{Main_Theorem_uniform} in case $h=1$. The  difference is that for a function $f\in L^r_{\infty,E}(G)$ and $x\in G$, we need to use inequality
\[
	\begin{array}{l}
		\displaystyle \left|\int_{G} \left(\mathcal{R}_{r-k}(t) - \Omega^{[r-1]}(t)\right)f^{(r)}(x+t)\,dt\right|\\ \qquad\quad\leqslant \displaystyle\left\|\mathcal{R}_{r-k} - \Omega^{[r-1]}\right\|_{E^1}\cdot\left\|f^{(r)}(x+\cdot)\right\|_{E} \leqslant \displaystyle\left\|\mathcal{R}_{r-k} - \Omega^{[r-1]}\right\|_{E^1}\cdot\left\|f^{(r)}\right\|_{E}.
	\end{array}
\]
The extremity of the function $\Phi$ can be proved in a similar way to Theorem~\ref{Main_Theorem_uniform}.
\end{proof}

\begin{proof}[The proof of Corollary~\ref{Main_Corollary_Ls}]
For every $h>0$, we observe that the functions $\Omega_h(x) := h^{-k}\Omega\left(x/h\right)$, $x\in G$, and $\Phi_h$ satisfy conditions~(\ref{relation}),~(\ref{Stiltjes}) and~(\ref{Holder}). Moreover,
\[
	\bigvee_G \Omega_h = h^{-k}\bigvee_G \Omega,\quad \left\|\mathcal{R}_{r-k} - \Omega^{[r-1]}_h\right\|_{L_{s'}(G)} = h^{r-k-1/s}\left\|\mathcal{R}_{r-k} - \Omega^{[r-1]}\right\|_{L_{s'}(G)}.
\]
Hence, by Theorem~\ref{Main_Theorem} there holds true the desired inequality~(\ref{Kolmogorov_inequality_Ls_additive}) and the function $\Phi_h$ turns~(\ref{Kolmogorov_inequality_Ls_additive}) into equality. Finally, minimizing the right hand part of~(\ref{Kolmogorov_inequality_Ls_additive}) by variable $h$, we arrive at inequality~(\ref{Kolmogorov_inequality_Ls}). The proof is finished.
\end{proof}


\subsubsection{Case of non-existence of extremal function in the Kolmogorov type inequality}

Let us present two results when conditions~(\ref{Stiltjes}) and~(\ref{Holder}) can be relaxed.

\begin{thm}
\label{Main_Theorem1}
Let $G = \mathbb{R}_+$ or $G = \mathbb{R}$, and numbers $k,r$, an ideal semi shift-invariant lattice $E$ on $G$ and a function $\Omega\in V(G)$ satisfy assumptions of Theorem~\ref{Main_Theorem}. Also, assume that there is a family of functions $\left\{\Phi_\varepsilon\right\}_{\varepsilon > 0}\subset W^{r}_{\infty,E}(G)$ satisfying equality~(\ref{Stiltjes}) and for $\varepsilon>0$, inequality
\begin{equation}
\label{Holder1}
	(-1)^{r}\int_G\left(\mathcal{R}_{r-k}(x) - \Omega^{[r-1]}(x)\right)\Phi_\varepsilon^{(r)}(x)\,dx > \left\|\mathcal{R}_{r-k} - \Omega^{[r-1]}\right\|_{E^1} - \varepsilon.
\end{equation}
Then inequality~(\ref{Main_Inequality}) holds true and is sharp in the sense that for every sufficiently small $\varepsilon>0$, there exists a function $f_{\varepsilon} \in L^r_{\infty,E}(G)$ such that
\[
	\left\|D^{k}_{-}f_\varepsilon\right\|_{L_{\infty}(G)} > \bigvee\limits_G \Omega\cdot \left\|f_\varepsilon\right\|_{L_\infty(G)} + \left(\left\|\mathcal{R}_{r-k} - \Omega^{[r-1]}\right\|_{E^1} - \varepsilon\right)\cdot \left\|f^{(r)}_\varepsilon\right\|_E.
\]
\end{thm}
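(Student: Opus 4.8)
The plan is to mimic the proof of Theorem~\ref{Main_Theorem}, keeping the upper bound verbatim and replacing the single extremal function $\Phi$ by the approximating family $\{\Phi_\varepsilon\}$. First I would observe that the inequality~(\ref{Main_Inequality}) itself is already established: its proof used only relation~(\ref{relation}), Proposition~\ref{Prop:integral_representationGeneral} (or the version for $L^r_{F,E}$), the estimate $\left\|\int_G \Phi(t)\,d\Omega(t+x)\right\|_{L_\infty(G)}\leqslant \bigvee_G\Omega\cdot\|f\|_{L_\infty(G)}$ for the Stieltjes-convolution operator, and the H\"older inequality in the pair $(E,E^1)$; none of this needs a maximizer. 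So the content of the theorem is entirely in the sharpness claim.

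For sharpness, fix a small $\varepsilon>0$ and put $f_\varepsilon:=\Phi_\varepsilon$. By Proposition~\ref{Prop:continuityGeneral} the derivative $D^k_-\Phi_\varepsilon$ is continuous on $G$, so $\left\|D^k_-\Phi_\varepsilon\right\|_{L_\infty(G)}\geqslant\left|D^k_-\Phi_\varepsilon(0)\right|$. Applying relation~(\ref{relation}) at the point $0$ gives
\[
	D^k_-\Phi_\varepsilon(0) = \int_G \Phi_\varepsilon(x)\,d\Omega(x) + (-1)^r\int_G\left(\mathcal{R}_{r-k}(x)-\Omega^{[r-1]}(x)\right)\Phi_\varepsilon^{(r)}(x)\,dx.
\]
Now I would bound the first term from below by $\bigvee_G\Omega\cdot\|\Phi_\varepsilon\|_{L_\infty(G)}$ using equality~(\ref{Stiltjes}), and the second term from below by $\left\|\mathcal{R}_{r-k}-\Omega^{[r-1]}\right\|_{E^1}-\varepsilon$ using hypothesis~(\ref{Holder1}). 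Provided the two terms have the same sign — which is the real technical point below — this yields
\[
	\left\|D^k_-\Phi_\varepsilon\right\|_{L_\infty(G)} \geqslant \bigvee_G\Omega\cdot\left\|\Phi_\varepsilon\right\|_{L_\infty(G)} + \left(\left\|\mathcal{R}_{r-k}-\Omega^{[r-1]}\right\|_{E^1}-\varepsilon\right)\cdot\left\|\Phi_\varepsilon^{(r)}\right\|_E,
\]
where in the last term I used $\left\|\Phi_\varepsilon^{(r)}\right\|_E\leqslant 1$ together with positivity of the bracket for small $\varepsilon$, so the claimed strict inequality for $f_\varepsilon=\Phi_\varepsilon$ follows at once (strictness comes from the $-\varepsilon$ improvement over the constant in~(\ref{Main_Inequality})).

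The main obstacle — and the step deserving care — is the sign/consistency issue: to add the two lower bounds I need both $\int_G\Phi_\varepsilon\,d\Omega$ and $(-1)^r\int_G(\mathcal{R}_{r-k}-\Omega^{[r-1]})\Phi_\varepsilon^{(r)}$ to be nonnegative (equivalently, I must write $\left|D^k_-\Phi_\varepsilon(0)\right|$, not a signed quantity, so I should replace $\Phi_\varepsilon$ by $-\Phi_\varepsilon$ if necessary, and note that $-\Phi_\varepsilon$ still satisfies~(\ref{Stiltjes}) and~(\ref{Holder1}) after possibly adjusting — here one uses that~(\ref{Stiltjes}) is stated with $\|\Phi\|_{L_\infty}$, which is sign-blind, while~(\ref{Holder1}) fixes the sign of $\Phi_\varepsilon^{(r)}$; these are compatible because $\mathcal{R}_{r-k}-\Omega^{[r-1]}\in E^1\subset L_1$ and the extremal structure forces $\Phi_\varepsilon$ to be, up to a harmless normalization, aligned with $d\Omega$). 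In the write-up I would simply say ``without loss of generality the two summands in the representation of $D^k_-\Phi_\varepsilon(0)$ are nonnegative, which can always be arranged by replacing $\Phi_\varepsilon$ with $-\Phi_\varepsilon$,'' and then add the estimates. The rest is the routine substitution $f_\varepsilon=\Phi_\varepsilon$ plus the remark that as $\varepsilon\to 0^+$ these $f_\varepsilon$ show the constant $\left\|\mathcal{R}_{r-k}-\Omega^{[r-1]}\right\|_{E^1}$ in~(\ref{Main_Inequality}) cannot be decreased, i.e.~(\ref{Main_Inequality}) is sharp even though (unlike in Theorem~\ref{Main_Theorem}) no single extremal function need exist.
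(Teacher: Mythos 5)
Your proposal is correct and follows essentially the same route as the paper: the upper bound is inherited from Theorem~\ref{Main_Theorem}, and sharpness is obtained by taking $f_\varepsilon=\Phi_\varepsilon$, evaluating $D^k_-\Phi_\varepsilon$ at $0$ via relation~(\ref{relation}), and adding the two lower bounds coming from~(\ref{Stiltjes}) and~(\ref{Holder1}) together with $\left\|\Phi_\varepsilon^{(r)}\right\|_E\leqslant 1$. The sign issue you flag is in fact automatic and needs no ``replace $\Phi_\varepsilon$ by $-\Phi_\varepsilon$'' step: condition~(\ref{Stiltjes}) forces $\int_G\Phi_\varepsilon\,d\Omega=\bigvee_G\Omega\cdot\|\Phi_\varepsilon\|_{L_\infty(G)}\geqslant 0$, and~(\ref{Holder1}) makes the second summand positive for all sufficiently small $\varepsilon$, so $\left|D^k_-\Phi_\varepsilon(0)\right|$ equals the sum of two nonnegative terms exactly as in the paper's proof.
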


\begin{thm}
\label{Extremal_absent}
Let $G = \mathbb{R}_+$ or $G = \mathbb{R}$, $r\in\mathbb{N}$, $k\in(0,r-1)\setminus\mathbb{N}$, $E$ be an ideal semi shift-invariant lattice on $G$ such that $\liminf\limits_{h\to 0^+} \left(h^{-1}\left\|\chi_{(0,h)}\right\|_E\right) =: \mu\in(0,+\infty)$. Also, let a function $\Omega\in V(G)$ be such that $\left(\mathcal{R}_{r-k} - \Omega^{[r-1]}\right)\in E^1$ and relation~(\ref{relation}) holds true for every $f\in L^r_{\infty,E}(G)$. Assume that there exists a function $\Phi\in L_{\infty}(G)$ such that its derivative $\Phi^{(r-1)}$ is piecewise constant on $G$, $\bigvee\limits_{G}\Phi^{(r-1)} = \mu^{-1}$, there exists $h_0>0$ such that the distance between each pair of discontinuity points of $\Phi^{(r-1)}$ is bounded below by $h_0$, and equalities~(\ref{Stiltjes}) and
\[
	(-1)^r \int_G \left(\mathcal{R}_{r-k}(x) - \Omega^{[r-1]}(x)\right)\,d\Phi^{(r-1)}(x) = \left\|\mathcal{R}_{r-k} - \Omega^{[r-1]}\right\|_{E^1}
\]
are valid. Then inequality~(\ref{Main_Inequality}) holds true and is sharp.
\end{thm}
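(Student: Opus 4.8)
The plan is to first establish the upper bound~(\ref{Main_Inequality}), which is already available: under the stated hypotheses on $\Omega$ and $E$, Theorem~\ref{Main_Theorem} (or the argument in its proof, which only uses that $\left(\mathcal{R}_{r-k} - \Omega^{[r-1]}\right)\in E^1$, that relation~(\ref{relation}) holds, and the generalized H\"older inequality in the ideal lattice pairing $E$--$E^1$) gives
\[
	\left\|D^{k}_{-}f\right\|_{L_{\infty}(G)} \leqslant \bigvee\limits_{G}\Omega\cdot\|f\|_{L_{\infty}(G)} + \left\|\mathcal{R}_{r-k}-\Omega^{[r-1]}\right\|_{E^1}\cdot\left\|f^{(r)}\right\|_{E}
\]
for every $f\in L^{r}_{\infty,E}(G)$. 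So the whole content of the theorem is the \emph{sharpness} assertion, and the plan is to produce, for each small $\varepsilon>0$, an explicit test function $f_\varepsilon\in L^{r}_{\infty,E}(G)$ that nearly saturates this inequality. The natural candidate is obtained by smoothing the given $\Phi$: since $\Phi^{(r-1)}$ is piecewise constant with jumps separated by at least $h_0$, for $h<h_0$ I replace each jump by an affine ramp of width $h$, obtaining $\Phi_h\in L^{r}_{\infty,E}(G)$ with $\Phi_h^{(r)}$ a sum of disjointly supported ``boxes'' of height $(\text{jump})/h$ and width $h$; equivalently $\Phi_h = \Phi * \psi_h$ for a suitable approximate identity (an indicator $h^{-1}\chi_{(0,h)}$ applied $r$ times would be the cleanest, but one application to $d\Phi^{(r-1)}$ suffices). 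The key quantitative point is $\left\|\Phi_h^{(r)}\right\|_{E}\to \mu\cdot\bigvee_G\Phi^{(r-1)} = 1$ as $h\to0^+$ along the subsequence realizing the $\liminf$, because the disjoint boxes have $E$-norm $h^{-1}|\text{jump}|\cdot\|\chi_{(0,h)}\|_E$ and, $\Phi^{(r-1)}$ having finitely many jumps on any compact set but possibly infinitely many overall, one must be slightly careful with the triangle inequality: either $\Phi^{(r-1)}$ has finitely many jumps (the typical extremal case), in which case this is immediate, or one truncates to finitely many jumps carrying all but $\varepsilon$ of the total variation using $\bigvee_G\Phi^{(r-1)}<\infty$.

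Next I would check the two convergences that make $\Phi_h$ nearly extremal. Since $\Phi_h\to\Phi$ uniformly on $G$ (the ramps change $\Phi$ by at most $O(h\cdot\|\Phi^{(r-1)}\|)$ locally and not at all outside $h$-neighborhoods of the jumps, after one more integration the sup-norm perturbation is $O(h)$), and since the linear functional $g\mapsto\int_G g\,d\Omega$ has finite norm $\bigvee_G\Omega$ and~(\ref{Stiltjes}) holds for $\Phi$ with $\|\Phi_h\|_{L_\infty(G)}\to\|\Phi\|_{L_\infty(G)}$, we get $\int_G\Phi_h\,d\Omega\to\bigvee_G\Omega\cdot\|\Phi\|_{L_\infty(G)}$; in fact for the sharpness inequality we only need a lower bound, $\int_G\Phi_h\,d\Omega \geqslant \bigvee_G\Omega\cdot\|\Phi_h\|_{L_\infty(G)} - o(1)$, which follows the same way, or one can even arrange $\int_G\Phi_h\,d\Omega = \bigvee_G\Omega\cdot\|\Phi_h\|_{L_\infty(G)}$ exactly if the jumps of $\Phi^{(r-1)}$ lie away from the support of $d\Omega$. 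For the second term, $\Phi_h^{(r)}(x)\,dx \to d\Phi^{(r-1)}(x)$ weakly-$*$, and $\left(\mathcal{R}_{r-k}-\Omega^{[r-1]}\right)\in E^1$; combining $E^1\subset L^1_{\mathrm{loc}}$ with the boundedness $\left\|\Phi_h^{(r)}\right\|_E\leqslant C$ and a tail estimate (again using $\mathcal{R}_{r-k}-\Omega^{[r-1]}\in L_1$ near $+\infty$, as $k<r-1$ makes $\mathcal{R}_{r-k}$ itself integrable at infinity... actually $\mathcal R_{r-k}$ grows, so the tail control must come from $\Phi^{(r-1)}$ being eventually constant, which it is since it has boundedly-separated jumps and finite variation, hence finitely many jumps — this is the cleanest route and I would state it as: finite variation plus jump-separation $h_0$ forces only finitely many jumps), one concludes
\[
	(-1)^{r}\int_G\left(\mathcal{R}_{r-k}(x)-\Omega^{[r-1]}(x)\right)\Phi_h^{(r)}(x)\,dx \;\longrightarrow\; (-1)^{r}\int_G\left(\mathcal{R}_{r-k}(x)-\Omega^{[r-1]}(x)\right)\,d\Phi^{(r-1)}(x) = \left\|\mathcal{R}_{r-k}-\Omega^{[r-1]}\right\|_{E^1}.
\]

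Finally I would assemble the pieces. By relation~(\ref{relation}) applied to $\Phi_h\in L^{r}_{\infty,E}(G)$ and by continuity of $D^k_-\Phi_h$ (Proposition~\ref{Prop:continuityGeneral}),
\[
	\left\|D^{k}_{-}\Phi_h\right\|_{L_{\infty}(G)} \geqslant \left|D^{k}_{-}\Phi_h(0)\right| = \left|\int_G\Phi_h\,d\Omega + (-1)^{r}\int_G\left(\mathcal{R}_{r-k}-\Omega^{[r-1]}\right)\Phi_h^{(r)}\,dx\right|,
\]
and by the two convergences above the right-hand side is at least $\bigvee_G\Omega\cdot\|\Phi_h\|_{L_\infty(G)} + \left(\left\|\mathcal{R}_{r-k}-\Omega^{[r-1]}\right\|_{E^1}-\varepsilon\right)\left\|\Phi_h^{(r)}\right\|_E$ once $h$ is small; renaming $f_\varepsilon := \Phi_h$ for such an $h$ gives exactly the claimed strict inequality (with possibly $2\varepsilon$, which is cosmetic) and hence sharpness of~(\ref{Main_Inequality}). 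The main obstacle I anticipate is the interchange-of-limits step for the second integral — making the weak-$*$ convergence of $\Phi_h^{(r)}\,dx$ to $d\Phi^{(r-1)}$ rigorous against the unbounded-at-infinity weight $\mathcal{R}_{r-k}-\Omega^{[r-1]}\in E^1$ — and I expect this is exactly why the hypothesis bundles together finite variation of $\Phi^{(r-1)}$ \emph{and} a uniform lower bound $h_0$ on the spacing of its discontinuities: together these force $\Phi^{(r-1)}$ to have finitely many jumps and be eventually constant, collapsing the interchange to a finite sum of elementary limits $\frac1h\int_{a}^{a+h}w(x)\,dx\to w(a)$ valid at Lebesgue points of $w\in L^1_{\mathrm{loc}}$, with the jump points assumed (or arranged) to be such points.
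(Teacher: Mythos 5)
Your proposal follows essentially the same route as the paper: the upper bound is inherited from Theorem~\ref{Main_Theorem} (the hypothesis $k<r-1$ guarantees conditions~(\ref{limit_condition0}) and~(\ref{limit_condition})), and sharpness is obtained by mollifying $\Phi$ — the paper uses the Steklov average $\Phi_h(x)=\frac1h\int_0^h\Phi(x+t)\,dt$, whose $r$-th derivative is exactly your ``sum of disjoint boxes'' $\frac1h\sum_j\alpha_j\chi_{[\beta_j-h,\beta_j)}$ — followed by the same three limits ($\int\Phi_h\,d\Omega$, the pairing against $\mathcal{R}_{r-k}-\Omega^{[r-1]}$, and $\liminf_h\|\Phi_h^{(r)}\|_E\leqslant 1$) and the same assembly via relation~(\ref{relation}) and continuity of $D^k_-\Phi_h$ at the point $0$. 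One assertion in your write-up is wrong, though: finite variation of $\Phi^{(r-1)}$ together with the $h_0$-separation of its jumps does \emph{not} force finitely many jumps or eventual constancy on an unbounded domain (take jumps of size $2^{-j}$ at the integers $j=1,2,\dots$), so your ``cleanest route'' for the tail control is not available. This is harmless for the norm estimate, since the paper simply applies the (countable) triangle inequality together with semi shift-invariance to get $\|\Phi_h^{(r)}\|_E\leqslant\sum_j\frac{|\alpha_j|}{h}\|\chi_{(0,h)}\|_E=\frac{1}{\mu h}\|\chi_{(0,h)}\|_E\cdot\mu\bigvee_G\Phi^{(r-1)}$, and your truncation fallback (keeping finitely many jumps carrying all but $\varepsilon$ of the variation) handles the remaining interchange-of-limits step for the second integral; but you should drop the ``finitely many jumps'' claim and rely on one of those two arguments instead. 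With that correction the proof is the paper's proof.
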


In case $E = L_1(G)$ we can obtain the following

\begin{cor}
\label{Main_Corollary_Ls_prime}
Let $G = \mathbb{R}$ or $G = \mathbb{R}_+$, $r\in\mathbb{N}$ and $k\in\left(0,r-1\right)\setminus\mathbb{N}$. Let also a function $\Omega\in V(G)$ be such that $\left(\mathcal{R}_{r-k} - \Omega^{[r-1]}\right) \in L_{\infty}(G)$ and relation~(\ref{relation}) holds true for every $f\in L^r_{\infty,1}(G)$. If an $(r-1)$-times differentiable function $\Phi$ with piecewise constant derivative $\Phi^{(r-1)}$ satisfies equalities~(\ref{Stiltjes}), $\bigvee\limits_G\Phi^{(r-1)} = 1$ and
\[
  (-1)^{r}\int_G\left(\mathcal{R}_{r-k}(x) - \Omega^{[r-1]}(x)\right)\,d\Phi^{(r-1)}(x) = \left\|\mathcal{R}_{r-k} - \Omega^{[r-1]}\right\|_{L_{\infty}(G)}
\]
then for every $f\in L_{\infty,1}^r(G)$ and $h>0$, there holds true sharp inequalities
\[
	\begin{array}{l}
		\displaystyle\left\|D^{k}_{-}f\right\|_{L_{\infty}(G)} \leqslant h^{-k}\bigvee\limits_{G} \Omega\cdot \|f\|_{L_{\infty}(G)}\\
		\displaystyle\qquad\qquad\qquad\qquad+ h^{r-k-1}\left\|\mathcal{R}_{r-k} - \Omega^{[r-1]}\right\|_{L_{\infty}(G)} \cdot\left\|f^{(r)}\right\|_{L_1(G)},
	\end{array}
\]
and
\[
	\left\|D^{k}_{-}f\right\|_{L_{\infty}(G)} \leqslant \frac{\left\|D^{k}_{-}\Phi\right\|_{L_{\infty}(G)}}{\left\|\Phi\right\|_{L_{\infty}(G)}^{1-\lambda}}\,\|f\|_{L_{\infty}(G)}^{1-\lambda} \left\|f^{(r)}\right\|_{L_{1}(G)}^{\lambda},\qquad \lambda = \frac{k}{r-1}.
\]
\end{cor}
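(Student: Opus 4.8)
The plan is to specialise Theorem~\ref{Extremal_absent} to the ideal lattice $E = L_1(G)$, whose associated space is $E^1 = L_\infty(G)$, and then to pass from scale $h=1$ to an arbitrary $h>0$ and to the multiplicative form by the dilation argument already used in the proof of Corollary~\ref{Main_Corollary_Ls}.

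First I would verify that $E = L_1(G)$ satisfies all the assumptions of Theorem~\ref{Extremal_absent}. It is an ideal semi shift-invariant lattice with $E^1 = L_\infty(G)$, and $h^{-1}\left\|\chi_{(0,h)}\right\|_{L_1(G)} = 1$ for every $h>0$, so $\mu = 1 \in (0,+\infty)$ and $\mu^{-1} = 1$. The space $L^r_{\infty,1}(G)$ is exactly $L^r_{\infty,E}(G)$ for this $E$, the hypothesis $\left(\mathcal{R}_{r-k}-\Omega^{[r-1]}\right)\in L_\infty(G)$ is exactly $\left(\mathcal{R}_{r-k}-\Omega^{[r-1]}\right)\in E^1$, and relation~(\ref{relation}) is assumed on the required class. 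The function $\Phi$ from the statement is bounded, $(r-1)$-times differentiable, with $\Phi^{(r-1)}$ piecewise constant of total variation $\bigvee_G\Phi^{(r-1)} = 1 = \mu^{-1}$ --- hence with its finitely many jump points separated by some $h_0 > 0$ --- and it fulfils~(\ref{Stiltjes}) together with the Stieltjes identity against $\mathcal{R}_{r-k}-\Omega^{[r-1]}$ whose right-hand side equals $\left\|\mathcal{R}_{r-k}-\Omega^{[r-1]}\right\|_{E^1}$. Thus Theorem~\ref{Extremal_absent} applies and gives, for every $f\in L^r_{\infty,1}(G)$, the sharp inequality
\[
	\left\|D^{k}_{-}f\right\|_{L_{\infty}(G)} \leqslant \bigvee\limits_{G}\Omega\cdot\|f\|_{L_{\infty}(G)} + \left\|\mathcal{R}_{r-k}-\Omega^{[r-1]}\right\|_{L_\infty(G)}\left\|f^{(r)}\right\|_{L_1(G)}
\]
the sharpness being witnessed in the limit by $L^r_{\infty,1}$-smoothings of $\Phi$.

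Next I would carry out the dilation step precisely as in the proof of Corollary~\ref{Main_Corollary_Ls}. For $h>0$ set $\Omega_h(x) := h^{-k}\Omega(x/h)$ and $\Phi_h(x) := h^{r-1}\Phi(x/h)$. A change of variables shows that $\Omega_h$, $\Phi_h$ again satisfy~(\ref{relation}),~(\ref{Stiltjes}) and the Stieltjes identity, that $\bigvee_G\Omega_h = h^{-k}\bigvee_G\Omega$ and $\left\|\mathcal{R}_{r-k}-\Omega^{[r-1]}_h\right\|_{L_\infty(G)} = h^{r-k-1}\left\|\mathcal{R}_{r-k}-\Omega^{[r-1]}\right\|_{L_\infty(G)}$, and that $\bigvee_G\Phi_h^{(r-1)} = 1$. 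Applying the displayed inequality with $\Omega_h$ in place of $\Omega$ yields the first (additive) inequality of the Corollary, and $\Phi_h$ turns it into equality; minimising its right-hand side over $h>0$ --- the minimiser satisfies $h^{r-1} = k\bigvee_G\Omega\cdot\|f\|_{L_{\infty}(G)}\big/\bigl((r-k-1)\left\|\mathcal{R}_{r-k}-\Omega^{[r-1]}\right\|_{L_\infty(G)}\left\|f^{(r)}\right\|_{L_1(G)}\bigr)$ --- produces the multiplicative inequality with $\lambda = k/(r-1)$. Its constant equals $\left\|D^{k}_{-}\Phi\right\|_{L_{\infty}(G)}\big/\left\|\Phi\right\|_{L_{\infty}(G)}^{1-\lambda}$: by the equality case of the displayed inequality for $\Phi$ one has $\left\|D^{k}_{-}\Phi\right\|_{L_{\infty}(G)} = \bigvee_G\Omega\cdot\|\Phi\|_{L_{\infty}(G)} + \left\|\mathcal{R}_{r-k}-\Omega^{[r-1]}\right\|_{L_\infty(G)}$, which is the value at $h=1$ of the minimised expression for $f=\Phi$, and comparing it with the general multiplicative bound applied to $\Phi$ forces equality and pins the constant down.

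The one genuinely delicate point --- everything else being the elementary dilation bookkeeping already performed for Corollaries~\ref{Main_Corollary} and~\ref{Main_Corollary_Ls} --- is that $\Phi$ is not itself an admissible test element, since $\Phi^{(r)}$ is a measure rather than an $L_1(G)$-function; accordingly all the equality statements and the sharpness must be understood as limits along $L^r_{\infty,1}$-smoothings of the functions $\Phi_h$. This is exactly the content of Theorem~\ref{Extremal_absent}, whose proof already supplies the necessary approximation and the continuity of $D^{k}_{-}$, so nothing new is required here.
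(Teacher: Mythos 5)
Your proposal is correct and follows essentially the route the paper intends: the corollary is stated as the $E=L_1(G)$ specialization of Theorem~\ref{Extremal_absent} (with $E^1=L_\infty(G)$ and $\mu=1$), followed by the same dilation and $h$-minimization bookkeeping used for Corollary~\ref{Main_Corollary_Ls}. The only point worth flagging is that you infer the $h_0$-separation of the jump points of $\Phi^{(r-1)}$ from ``piecewise constant,'' which is really an implicit hypothesis carried over from Theorem~\ref{Extremal_absent} rather than a consequence; this matches the paper's own (tacit) reading.
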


\begin{proof}[The proof of Theorem~\ref{Main_Theorem1}]
We observe that inequality~(\ref{Main_Inequality}) holds true for every $f\in L^r_{\infty,E}(G)$. Let us prove that~(\ref{Main_Inequality}) is sharp. Let $\varepsilon>0$ be arbitrary and sufficiently small. Due to Proposition~\ref{Prop:continuityGeneral} the fractional derivative $D^{k}_{-}\Phi_\varepsilon$ is continuous on $G$. Hence, taking into account equalities~(\ref{Stiltjes}) and~(\ref{Holder1}) we obtain
\[
	\begin{array}{l}
		\displaystyle\left\|D^{k}_{-}\Phi_\varepsilon\right\|_{L_{\infty}(G)} \geqslant \displaystyle\left|D^{k}_{-}\Phi_\varepsilon(0)\right| \\
		\displaystyle\qquad= \left|\int_G \Phi_\varepsilon(x)\,d\Omega(x) + (-1)^{r} \int_G \left(\mathcal{R}_{r-k}(x) - \Omega^{[r-1]}(x)\right)\Phi_\varepsilon^{(r)}(x)\,dx\right| \\
		\displaystyle\qquad\geqslant \displaystyle \bigvee_{G}\Omega\cdot \left\|\Phi_\varepsilon\right\|_{L_{\infty}(G)} + \left\|\mathcal{R}_{r-k} - \Omega^{[r-1]}\right\|_{E^1} - \varepsilon \\
		\displaystyle\qquad \geqslant \displaystyle \bigvee_{G}\Omega\cdot \left\|\Phi_\varepsilon\right\|_{L_{\infty}(G)} + \left(\left\|\mathcal{R}_{r-k} - \Omega^{[r-1]}\right\|_{E^1} - \varepsilon\right)\cdot \left\|\Phi_\varepsilon^{(r)}\right\|_{E}.
	\end{array}
\]
The proof is finished.
\end{proof}



\begin{proof}[The proof of Theorem~\ref{Extremal_absent}]
Since $k<r-1$ we see that $(\cdot)^{r-k-1}\chi_{(0,1)}(\cdot)\in E^1$ and condition~(\ref{limit_condition}) is also fulfilled. Hence, by Theorem~\ref{Main_Theorem}, inequality~(\ref{Main_Inequality}) holds true. Let us prove that inequality~(\ref{Main_Inequality}) is sharp. To this end by $B = \left\{\beta_j\right\}_{j\in J}$ ($J$ is a finite or countable set of indices) we denote the discontinuity points of $\Phi^{(r-1)}$ and set $\alpha_j := \Phi\left(\beta_j^+\right) - \Phi\left(\beta_j^{-}\right)$, $j\in J$. Due to assumption there exists $h_0>0$ such that $\left|\beta_j - \beta_i\right| \geqslant h_0$ for every distinct indexes $i,j\in J$. Now, for every $h\in\left(0,h_0\right)$, we define the function
\[
	\Phi_h(x) := \frac 1h \int_0^h \Phi(x + t)\,dt,\qquad x\in G.
\]
It is easy to show that as $h\to 0^+$, we have
\[
	\int_{G} \Phi_h(x)\, d\Omega(x) \to \int_{G} \Phi(x)\, d\Omega(x) = \bigvee_{G}\Omega\cdot \|\Phi\|_{L_{\infty}(G)},
\]
\[
	\begin{array}{l}
		\displaystyle\int_G\left(\mathcal{R}_{r-k}(x) - \Omega^{[r-1]}(x)\right)\Phi_h^{(r)}(x)\,dx \to \displaystyle \int_G\left(\mathcal{R}_{r-k}(x) - \Omega^{[r-1]}(x)\right)\,d\Phi^{(r-1)}(x) \\
		\qquad\qquad\qquad\qquad\quad\qquad \qquad\qquad\qquad=\displaystyle(-1)^{r}\left\|\mathcal{R}_{r-k} - \Omega^{[r-1]}\right\|_{E^1}
	\end{array}
\]
and
\[
	\liminf\limits_{h\to 0^+}\left\|\Phi_h^{(r)}\right\|_{E} \leqslant \liminf\limits_{h\to 0^+}\sum\limits_{j\in J} \frac{\left|\alpha_j\right|}{h}\cdot \left\|\chi_{(0,h)}\right\|_{E} = \frac{1}{\mu}\cdot\liminf\limits_{h\to 0^+}\frac{\left\|\chi_{(0,h)}\right\|_{E}}{h} = 1.
\]
So that due to continuity of $D^{k}_{-}\Phi_h$, for every $\varepsilon$ and every sufficiently small $h>0$,
\[
	\begin{array}{rcl}
		\displaystyle\left\|D^{k}_{-}\Phi_h\right\|_{L_\infty(G)} & \geqslant & \displaystyle D^{k}_{-}\Phi_h(0) > \bigvee_G \Omega\cdot \left\|\Phi_h\right\|_{L_\infty(G)} + \left\|\mathcal{R}_{r-k} - \Omega^{[r-1]}\right\|_{E^1} - \varepsilon \\
		& \geqslant & \displaystyle \bigvee_G \Omega\cdot \left\|\Phi_h\right\|_{L_\infty(G)} + \left(\left\|\mathcal{R}_{r-k} - \Omega^{[r-1]}\right\|_{E^1} - \varepsilon\right) \left\|\Phi_h^{(r)}\right\|_{E}.
	\end{array}
\]
The proof is finished.
\end{proof}

\section{Consequences of main results}
\label{Sec:Low_smoothness}

In this section we deduce new sharp Kolmogorov type inequalities from the results of the previous section when the order of the higher order derivative is $1$ or $2$.

\subsection{Case $r=1$ and $k\in(0,1)$}
\label{SubSec:r=1}

Let $G = \mathbb{R}$ or $G = \mathbb{R}_+$. For $k\in(0,1)$ and $h>0$, we set
\[
	\tau_h(x) := \left\{\begin{array}{ll}
		0, & x\not \in G\setminus(0,h), \\
		x^{-k} - h^{-k}, & x\in (0,h).
	\end{array}\right.
\]
The following proposition is the consequence of Theorem~\ref{Main_Theorem1}.

\begin{cor}
\label{Cor:r=1}
Let $G = \mathbb{R}$ or $G = \mathbb{R}_+$, $k\in(0,1)$, $E$ be an ideal semi shift-invariant lattice on $G$ satisfying conditions~(\ref{limit_condition0}) and~(\ref{limit_condition}), $E^1$ be the associated space to $E$. Then for every $f\in L^1_{\infty,E}(G)$ and $h>0$, there holds true sharp inequality
\begin{equation}
\label{Kolmgorov_inequality_r=1}
	\left\|D^{k}_{-}f\right\|_{L_{\infty}(G)} \leqslant \frac{2h^{-k}}{\Gamma(1-k)}\,\|f\|_{L_{\infty}(G)} + \frac{\left\|\tau_h\right\|_{E^1}}{\Gamma(1-k)}\,\left\|f'\right\|_{E}.
\end{equation}
\end{cor}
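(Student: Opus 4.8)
The plan is to apply Theorem~\ref{Main_Theorem1} with $r=1$, so that the roles of $\Omega$ and $\Phi_\varepsilon$ must be made explicit. First I would identify the function $\Omega$. Since $r=1$, we have $\Omega^{[r-1]} = \Omega^{[0]} = \Omega$ and $\mathcal{R}_{r-k} = \mathcal{R}_{1-k}$, i.e. $\mathcal{R}_{1-k}(x) = x^{-k}/\Gamma(1-k)$ for $x>0$ and $0$ otherwise. The natural candidate, suggested by cases 1--3 of Subsection~\ref{SubSec:uniform} specialized to $r=1$, is
\[
	\Omega(x) := \left\{\begin{array}{ll}
		0, & x \leqslant 0,\\
		\dfrac{h^{-k}}{\Gamma(1-k)}, & x \in (0,h),\\[6pt]
		\dfrac{x^{-k}}{\Gamma(1-k)}, & x \geqslant h,
	\end{array}\right.
\]
rescaled to parameter $h$; one checks directly that $\Omega \in V(G)$, that $\bigvee_G \Omega = 2h^{-k}/\Gamma(1-k)$ (the variation picking up $h^{-k}/\Gamma(1-k)$ from the jump at $0$ and the same amount along the decreasing tail on $[h,\infty)$), and that $\mathcal{R}_{1-k}(x) - \Omega(x)$ equals $\tau_h(x)$ on $(0,h)$ and vanishes outside $(0,h)$ — hence $\|\mathcal{R}_{1-k} - \Omega^{[0]}\|_{E^1} = \|\tau_h\|_{E^1}$, which matches the constants in~(\ref{Kolmgorov_inequality_r=1}). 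Condition~(\ref{limit_condition0}) gives $(\cdot)^{-k}\chi_{(0,1)} \in E^1$, and since $\tau_h(x) \leqslant x^{-k}$ on $(0,h)$ and $E^1$ is an ideal lattice, $\tau_h \in E^1 \subset L_1(G)$, so in particular $(\mathcal{R}_{1-k} - \Omega^{[0]}) \in L_1(G)$.

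Next I would verify relation~(\ref{relation}) for this $\Omega$ and every $f \in L^1_{\infty,E}(G)$. With $r=1$, (\ref{relation}) reads
\[
	D^k_- f(0) - \int_G f(x)\, d\Omega(x) = -\int_G \bigl(\mathcal{R}_{1-k}(x) - \Omega(x)\bigr) f'(x)\, dx.
\]
By Proposition~\ref{Prop:continuityGeneral} (applicable since~(\ref{limit_condition0}),~(\ref{limit_condition}) hold), $D^k_-f$ exists and Proposition~\ref{Prop:integral_representationGeneral} gives $D^k_-f(0) = -\int_0^\infty \mathcal{R}_{1-k}(t) f'(t)\, dt$ (here $\int_0^\infty t^{-k} f'(t)\,dt/\Gamma(1-k)$ with the sign $(-1)^r = -1$). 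It remains to check that $\int_G f(x)\, d\Omega(x) = -\int_G \Omega(x) f'(x)\, dx$, which is integration by parts: the Stieltjes integral against $d\Omega$ pairs with $f$, and since $\Omega$ has bounded variation and a single jump, plus $f$ is bounded with $f' \in E \subset L_1$ near $0$ so boundary terms at $0$ and $+\infty$ vanish, the formula $\int f\, d\Omega = -\int \Omega f' dx + [\text{boundary}]$ holds; I would spell out that $\Omega(x) f(x) \to 0$ as $x \to \infty$ (since $\Omega(x) = O(x^{-k})$ and $f$ bounded) and the jump at $0$ contributes nothing because $\Omega(0^-) = 0$ and we evaluate against $f(0)$ correctly. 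Substituting both identities gives exactly~(\ref{relation}).

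Finally, for sharpness I would exhibit the family $\{\Phi_\varepsilon\}$. Condition~(\ref{Stiltjes}) forces $\int_G \Phi_\varepsilon\, d\Omega = \bigvee_G\Omega \cdot \|\Phi_\varepsilon\|_{L_\infty(G)}$: since $d\Omega$ consists of a positive atom of mass $h^{-k}/\Gamma(1-k)$ at $0$ and a negative absolutely continuous part of total mass $h^{-k}/\Gamma(1-k)$ on $[h,\infty)$, this is achieved by any $\Phi_\varepsilon$ that equals $+\|\Phi_\varepsilon\|_\infty$ at the atom and $-\|\Phi_\varepsilon\|_\infty$ on the support of the negative part, e.g. $\Phi_\varepsilon(0) = 1$ and $\Phi_\varepsilon \equiv -1$ on $[h,\infty)$. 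Condition~(\ref{Holder1}) requires $-\int_0^h \tau_h(x)\Phi_\varepsilon'(x)\, dx > \|\tau_h\|_{E^1} - \varepsilon$; this is the standard duality between $E$ and $E^1$, so I would take $\psi_\varepsilon \in E$ with $\|\psi_\varepsilon\|_E \leqslant 1$ and $\int_0^h \tau_h \psi_\varepsilon > \|\tau_h\|_{E^1} - \varepsilon$, then choose $\Phi_\varepsilon' = -\psi_\varepsilon$ on $(0,h)$, extending $\Phi_\varepsilon$ to be constant $= 1$ on $(-\infty,0]$ (or $\{0\}$) and constant $= 1 - \int_0^h \psi_\varepsilon$ on $[h,\infty)$, then rescaling/renormalizing so that both $\|\Phi_\varepsilon^{(1)}\|_E = \|\psi_\varepsilon\|_E \leqslant 1$ and~(\ref{Stiltjes}) hold simultaneously. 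The main obstacle is reconciling these two constraints: the near-dual-extremal profile for~(\ref{Holder1}) lives on $(0,h)$, while~(\ref{Stiltjes}) constrains the values of $\Phi_\varepsilon$ at $0$ and on $[h,\infty)$; I expect one must verify that $\int_0^h \psi_\varepsilon$ can be controlled (it is bounded since $\tau_h\in E^1$, $\psi_\varepsilon\in E$, $\chi_{(0,h)}\in E^1$) so that after an affine adjustment the resulting $\Phi_\varepsilon$ still sits in $W^1_{\infty,E}(G)$ with $\|\Phi_\varepsilon\|_{L_\infty}$ attained at the right places — a bookkeeping argument rather than a deep one. With $\Omega$, $\tau_h$, and $\{\Phi_\varepsilon\}$ in hand, Theorem~\ref{Main_Theorem1} delivers both~(\ref{Kolmgorov_inequality_r=1}) and its sharpness, after applying the dilation $x \mapsto x/h$ exactly as in the proof of Corollary~\ref{Main_Corollary_Ls}.
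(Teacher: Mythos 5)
Your proposal follows essentially the same route as the paper: the same $\Omega$ from~(\ref{Omega_r=1}), the same verification of~(\ref{relation}) and of $\bigvee_G\Omega = 2h^{-k}/\Gamma(1-k)$, and the same duality-based family $\{\Phi_\varepsilon\}$ fed into Theorem~\ref{Main_Theorem1} (note only that $\mathcal{R}_{1-k}-\Omega = \tau_h/\Gamma(1-k)$, not $\tau_h$, which is where the $\Gamma(1-k)$ in the coefficient of $\|f'\|_E$ comes from, and that no final dilation is needed since $h$ is already built into $\Omega$ and $\tau_h$). The ``reconciliation'' you flag as the main obstacle is resolved in the paper by normalizing the near-extremal dual element: since $\tau_h\geqslant 0$ on $(0,h)$ and $E$ is an ideal lattice, one may take $g_\varepsilon\geqslant 0$ with $\mathrm{supp}\,g_\varepsilon=[0,h]$ without losing either $\|g_\varepsilon\|_E\leqslant 1$ or the lower bound in~(\ref{Holder1}); then $\Phi_\varepsilon(x)=-\int_0^x g_\varepsilon(t)\,dt+\tfrac12\int_0^h g_\varepsilon(t)\,dt$ is monotone nonincreasing, equals $+\|\Phi_\varepsilon\|_{L_\infty(G)}$ at $0$ and $-\|\Phi_\varepsilon\|_{L_\infty(G)}$ on $[h,\infty)$, so~(\ref{Stiltjes}) holds automatically and no further affine adjustment or renormalization is required.
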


\begin{proof}
For every $h>0$, we define the function
\begin{equation}
\label{Omega_r=1}
 \Omega(x) := \mathcal{R}_{1-k}(x) - \frac{\tau_h(x)}{\Gamma(1-k)} = \left\{
 	\begin{array}{ll}
 		0, & x\in G\setminus(0,+\infty), \\
 		\frac{h^{-k}}{\Gamma(1-k)}, & x\in(0,h),\\
 		\frac{x^{-k}}{\Gamma(1-k)}, & x \geqslant h.
 	\end{array}\right.
\end{equation}
It is easy to check that relation~(\ref{relation}) holds true for every $f\in L^{1}_{\infty,E}(G)$, $\bigvee\limits_G \Omega = \frac{2h^{-k}}{\Gamma(1-k)}$ and $\mathcal{R}_{1-k} - \Omega = \frac{\tau_h}{\Gamma(1-k)} \in E^1$.

Let us construct a family of functions $\left\{\Phi_{\varepsilon}\right\}_{\varepsilon>0}\subset W^{1}_{\infty,E}(G)$ satisfying conditions~(\ref{Stiltjes}) and~(\ref{Holder1}). For every $\varepsilon > 0$, there exists a function $g_{\varepsilon}\in E$, $\left\|g_{\varepsilon}\right\|_{E} \leqslant 1$, such that
\[
	\int_0^{h} \left(\mathcal{R}_{1-k}(x) - \Omega(x)\right) g_{\varepsilon}(x)\,dx > \left\|\mathcal{R}_{1-k} - \Omega\right\|_{E^1} - \varepsilon = \frac{\left\|\tau_h\right\|_{E^1}}{\Gamma(1-k)} - \varepsilon.
\]
Without loss of generality we may assume that $g_{\varepsilon}$ is non-negative on $G$ and $\textrm{supp}\,g_{\varepsilon} = [0,h]$. Next, we define the function $\Phi_{\varepsilon}$ as the first integral of $\left(-g_{\varepsilon}\right)$:
\[
	\Phi_{\varepsilon}(x) := -\int_0^{x} g_{\varepsilon}(t)\,dt + \frac{1}{2} \int_0^{h} g_{\varepsilon}(t)\,dt,\qquad x\in G.
\]
Clearly, $\Phi_{\varepsilon} \in W^{1}_{\infty,E}(G)$ and $\Phi_{\varepsilon}(x) = -\Phi_\varepsilon(0) = - \left\|\Phi_{\varepsilon}\right\|_{L_{\infty}(G)}$, $x\geqslant h$. As a result,
\[
	\int_G \Phi_{\varepsilon}(x)\,d\Omega(x) = \frac{h^{-k}\Phi_{\varepsilon}(0)}{\Gamma(1-k)} + \int_h^{\infty} \frac{\Phi_{\varepsilon}(x)\,d\left(x^{-k}\right)}{\Gamma(1-k)} = \bigvee\limits_{G}\Omega\cdot\left\|\Phi_{\varepsilon}\right\|_{L_{\infty}(G)}
\]
and
\[
	\begin{array}{rcl}
		\displaystyle-\int_G \left(\mathcal{R}_{1-k}(x) - \Omega(x)\right)\Phi'_{\varepsilon}(x)\,dx & = &\displaystyle \int_0^{h}\left(\mathcal{R}_{1-k}(x) - \Omega(x)\right) g_{\varepsilon}(x)\,dx \\ [10pt]
		& > &
		\displaystyle\left\|\mathcal{R}_{1-k} - \Omega\right\|_{E^1} - \varepsilon.
	\end{array}
\]
Therefore, the function $\Omega$ and the family of functions $\left\{\Phi_{\varepsilon}\right\}_{\varepsilon>0}$ satisfy assumptions of Theorem~\ref{Main_Theorem1}. Hence, inequality~(\ref{Kolmgorov_inequality_r=1}) holds true and is sharp.
\end{proof}

Next, we formulate the following Stein type inequality.
\begin{cor}
\label{Cor:Stein_r=1}
For $k\in(0,1)$, $h>0$ and $f\in L^1_{\infty,1}(\mathbb{R})$, there hold true sharp inequalities
\begin{equation}
	\begin{array}{rcl}
		\displaystyle \left\|D^k_-f\right\|_{L_{1}(\mathbb{R})} & \leqslant & \displaystyle \frac{2h^{-k}\left\|f\right\|_{L_1(\mathbb{R})}}{\Gamma(1-k)}  + \frac{kh^{1-k}\left\|f'\right\|_{L_1(\mathbb{R})}}{\Gamma(2-k)}, \label{Stein_r=1} \\[8pt]
		\displaystyle \left\|D^k_-f\right\|_{L_{1}(\mathbb{R})} & \leqslant & \displaystyle\frac{2^{1-k}}{\Gamma(2-k)}\, \left\|f\right\|^{1-k}_{L_1(\mathbb{R})}\left\|f'\right\|_{L_1(\mathbb{R})}^{k}.
	\end{array}
\end{equation}
\end{cor}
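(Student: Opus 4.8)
The plan is to derive Corollary~\ref{Cor:Stein_r=1} from Corollary~\ref{Cor:r=1} (or, equivalently, from Theorem~\ref{Main_Theorem1}) by specializing to the lattice $E = L_1(\mathbb{R})$, whose associated space is $E^1 = L_\infty(\mathbb{R})$. First I would record that $\|\tau_h\|_{L_\infty(\mathbb{R})} = \sup_{x\in(0,h)}\left(x^{-k} - h^{-k}\right)$; since $x^{-k}$ blows up as $x\to 0^+$, this supremum is $+\infty$, so the $E^1$-norm in~(\ref{Kolmgorov_inequality_r=1}) is not directly finite and one must instead run the argument of Theorem~\ref{Main_Theorem1} with the correct bound on $\|\mathcal{R}_{1-k} - \Omega^{[0]}\|_{L_\infty}$ computed over the tail region, i.e. one needs $\left(\mathcal{R}_{1-k} - \Omega\right) \in L_\infty(\mathbb{R})$ as required; with $\Omega$ as in~(\ref{Omega_r=1}), $\mathcal{R}_{1-k} - \Omega = \tau_h/\Gamma(1-k)$ is supported on $(0,h)$ and equals $\left(x^{-k} - h^{-k}\right)/\Gamma(1-k)$ there, which is \emph{not} in $L_\infty$. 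So the right move is to switch the roles: keep $f$, $f'$ in $L_1(\mathbb{R})$ and use the dual pairing $\|\cdot\|_{L_1}$ against $L_\infty$ in the \emph{other} direction. Concretely, the term $\left\|\mathcal{R}_{r-k} - \Omega^{[r-1]}\right\|_{L_1(G)}$ from Theorem~\ref{Kolmogorov_Stein}/Corollary~\ref{Kolmogorov_Stein_Ls} (with $r=1$, $s=1$, $G=\mathbb{R}$) is exactly what appears, and that quantity \emph{is} finite: $\int_0^h \left(x^{-k} - h^{-k}\right) dx = \frac{h^{1-k}}{1-k} - h^{1-k} = \frac{k\,h^{1-k}}{1-k}$. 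Dividing by $\Gamma(1-k)$ and using $\Gamma(2-k) = (1-k)\Gamma(1-k)$ gives the coefficient $\frac{k h^{1-k}}{\Gamma(2-k)}$ in the first inequality. Similarly $\bigvee_{\mathbb{R}}\Omega = \frac{2h^{-k}}{\Gamma(1-k)}$ gives the first coefficient, so the additive inequality follows directly from Corollary~\ref{Kolmogorov_Stein_Ls} with this choice of $\Omega$.

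Next I would verify the hypotheses needed to invoke Corollary~\ref{Kolmogorov_Stein_Ls}: that $\Omega\in V(\mathbb{R})$ (clear: it is monotone on each piece with a single jump of size $\frac{h^{-k}}{\Gamma(1-k)}$ at $0$ and decreasing continuously afterward, total variation $\frac{2h^{-k}}{\Gamma(1-k)}$), that $\mathcal{R}_{1-k} - \Omega^{[0]} = \mathcal{R}_{1-k} - \Omega \in L_1(\mathbb{R})$ (shown above), and that relation~(\ref{relation}) holds for every $f\in L^1_{1,1}(\mathbb{R})$ — this last point was already checked inside the proof of Corollary~\ref{Cor:r=1} for $f\in L^1_{\infty,E}(G)$ and the same integration-by-parts computation works verbatim for $f\in L^1_{1,1}(\mathbb{R})$ (it is just $D^k_- f(0) = \int_{\mathbb{R}} f\,d\Omega + (-1)\int_{\mathbb{R}}(\mathcal{R}_{1-k} - \Omega) f'$, which is the definition of $\Omega$ rearranged via Proposition~\ref{Prop:existence_and_integral_representation}). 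Then the additive Stein inequality is immediate, and the multiplicative one follows by minimizing $h^{-k}\frac{2\|f\|_{L_1}}{\Gamma(1-k)} + h^{1-k}\frac{k\|f'\|_{L_1}}{\Gamma(2-k)}$ over $h>0$: the optimal $h$ is proportional to $\|f\|_{L_1}/\|f'\|_{L_1}$, and after the elementary one-variable optimization the constant collapses to $\frac{2^{1-k}}{\Gamma(2-k)}$ (one checks $\frac{2}{\Gamma(1-k)}\cdot\frac{(1-k)^{1-k}}{\text{something}}$ simplifies, using $\Gamma(2-k)=(1-k)\Gamma(1-k)$, to exactly $2^{1-k}/\Gamma(2-k)$).

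The genuinely non-routine part is the \emph{sharpness} assertion, since $\tau_h \notin L_\infty$ means no extremal function $\Phi$ satisfying~(\ref{Holder_uniform}) with the $L_1$-norm on the left can exist, and one is in the ``non-existence of extremal function'' regime. Here I would build an approximating family $\left\{f_\varepsilon\right\}$ directly: take $\Phi_\varepsilon$ to be (up to normalization and the scaling $x\mapsto x/h$) the primitive of $-g_\varepsilon$ where $g_\varepsilon \geq 0$ is supported on $(0,h)$, concentrated near $0$, with $\|g_\varepsilon\|_{L_1}\leq 1$ and $\int_0^h\left(x^{-k} - h^{-k}\right) g_\varepsilon(x)\,dx$ as close as desired to $\frac{k h^{1-k}}{1-k}\cdot$ — wait, the sup of $\int \tau_h g$ over $\|g\|_{L_1}\le 1$ is $\|\tau_h\|_{L_\infty} = +\infty$, so this approach fails for the \emph{same} family. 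The correct sharpness argument must instead exploit that we want sharpness of the \emph{$L_1\to L_1$} inequality, so one should test with $f_\varepsilon$ whose derivative $f_\varepsilon'$ is an \emph{approximate identity} (a bump of $L_1$-mass one shrinking to a point) rather than a function dual to $\tau_h$: then $D^k_- f_\varepsilon$ is approximately $(-1)/\Gamma(1-k)$ times the translate-average of $\mathcal{R}_{1-k}$ against $f_\varepsilon'$, and computing its $L_1(\mathbb{R})$-norm against the $L_1$-norms of $f_\varepsilon$ and $f_\varepsilon'$ and letting the bump shrink recovers equality. I expect this extremal construction — reconciling the $L_1$-norm on the left, the vanishing-width bump for $f'$, and the exact value of $\int_0^h(x^{-k}-h^{-k})\,dx$ — to be the main obstacle; the finiteness, the hypothesis-checking, and the $h$-optimization are all mechanical.
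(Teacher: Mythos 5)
The first half of your argument is correct and is exactly the paper's route: the additive inequality follows from Corollary~\ref{Kolmogorov_Stein_Ls} with $s=1$, $r=1$ and the $\Omega$ of~(\ref{Omega_r=1}), using $\bigvee_{\mathbb{R}}\Omega = 2h^{-k}/\Gamma(1-k)$ and $\left\|\mathcal{R}_{1-k}-\Omega\right\|_{L_1(\mathbb{R})} = \frac{1}{\Gamma(1-k)}\int_0^h\left(x^{-k}-h^{-k}\right)dx = kh^{1-k}/\Gamma(2-k)$; and your observation that this is a statement about the $L_1$-norm of the kernel rather than its $E^1=L_\infty$-norm (which is infinite here) is the right diagnosis. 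The optimization over $h$ yielding $2^{1-k}/\Gamma(2-k)$ also checks out.

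However, the sharpness argument is genuinely missing, and it is not merely a technical obstacle to be expected to resolve itself. Your proposed test family — an $f_\varepsilon$ whose derivative is a single approximate-identity bump of $L_1$-mass one — cannot work as stated: if $f'_\varepsilon$ is a nonnegative bump of total mass $1$, then $f_\varepsilon$ tends to distinct constants at $\pm\infty$ and so is not in $L_1(\mathbb{R})$ at all. The derivative must have mean zero, which forces \emph{two} bumps of opposite sign, and the distance between them must be tuned to the parameter $h$ of the inequality. The paper's choice is $\Phi_\varepsilon := S_\varepsilon\chi_{(0,h)}$, the Steklov average of the indicator of $(0,h)$, for which $\left\|\Phi_\varepsilon\right\|_{L_1(\mathbb{R})}=h$, $\left\|\Phi'_\varepsilon\right\|_{L_1(\mathbb{R})}=2$, and
\[
\lim_{\varepsilon\to 0^+}\left\|D^k_-\Phi_\varepsilon\right\|_{L_1(\mathbb{R})}
=\left\|D^k_-\chi_{(0,h)}\right\|_{L_1(\mathbb{R})}=\frac{2h^{1-k}}{\Gamma(2-k)}
=\frac{2h^{-k}\cdot h}{\Gamma(1-k)}+\frac{kh^{1-k}\cdot 2}{\Gamma(2-k)},
\]
the last identity following from $\Gamma(2-k)=(1-k)\Gamma(1-k)$. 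This realizes equality in the additive inequality in the limit (and hence in the multiplicative one, since $h$ is precisely the optimal parameter for this ratio $\|f\|_{L_1}/\|f'\|_{L_1}=h/2$). So the missing idea is concrete: the extremal object is the (non-admissible, hence only approximable) indicator function $\chi_{(0,h)}$, not a one-bump approximate identity, and its fractional derivative's $L_1$-norm must be computed explicitly to close the argument.
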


\begin{proof}
For $h>0$, let $\Omega$ be defined by~(\ref{Omega_r=1}). Then both desired inequalities follow from Corollary~\ref{Kolmogorov_Stein_Ls}. Let us prove that inequalities~(\ref{Stein_r=1}) are sharp. To this end for every $\varepsilon\in(0,h)$, we consider the Steklov averaging operator $S_\varepsilon:L_\infty(G)\to L_\infty(G)$
\[
	S_{\varepsilon} f(\cdot) = \frac{1}{\varepsilon}\int_{0}^{\varepsilon} f(\cdot + t)\,dt,\qquad f\in L_\infty(G),
\]
and define the function $\Phi_{\varepsilon} := S_{\varepsilon}\chi_{(0,h)}$. Clearly, $\left\|\Phi_{\varepsilon}\right\|_{L_1(\mathbb{R})} = h$, $\left\|\Phi'_{\varepsilon}\right\|_{L_1(\mathbb{R})} = 2$ and
\[
	\lim\limits_{\varepsilon \to 0^+} \left\|D^{k}_-\Phi_\varepsilon\right\|_{L_{1}(\mathbb{R})} = \left\|D^{k}_-\chi_{(0,h)}\right\|_{L_{1}(\mathbb{R})} =  \frac{2h^{1-k}}{\Gamma(2-k)}.
\]
Plugging the latter relations into the first of inequalities~(\ref{Stein_r=1}) we turn it into equality. The proof is finished. 
\end{proof}

\subsection{Case $G = \mathbb{R}_+$, $r=2$ and $k\in(0,1)$}
\label{SubSec:R+k0..1r=2}

For $k\in(0,1)$ and $h>0$, we define
\[
	\tau_h(x) := \left\{\begin{array}{ll}
		x^{1-k} - h^{-k}x, & x\in[0,h), \\
		0, & x \geqslant h.
	\end{array}\right.
\]
There holds true the following consequence of Theorem~\ref{Main_Theorem1}.

\begin{cor}
\label{Cor:r=2}
Let $k\in(0,1)$, $E$ be an ideal semi shift-invariant lattice on $\mathbb{R}_+$ satisfying conditions~(\ref{limit_condition0}) and~(\ref{limit_condition}), $E^1$ be the associated space to $E$. Then for every $f\in L^2_{\infty,E}\left(\mathbb{R}_+\right)$ and $h>0$, there holds true sharp inequality
\begin{equation}
\label{Kolmogorov_inequality_r=2}
  \left\|D^{k}_{-}f\right\|_{L_{\infty}\left(\mathbb{R}_+\right)} \leqslant \frac{2h^{-k}}{\Gamma(2-k)}\,\|f\|_{L_{\infty}\left(\mathbb{R}_+\right)} + \frac{\left\|\tau_h\right\|_{E^1}}{\Gamma(2-k)}\,\left\|f''\right\|_{E}.
\end{equation}
\end{cor}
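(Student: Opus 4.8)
The plan is to deduce Corollary~\ref{Cor:r=2} from Theorem~\ref{Main_Theorem1} applied with $r=2$, in complete analogy with the deduction of Corollary~\ref{Cor:r=1}. For fixed $h>0$ the natural candidate for $\Omega$ is the one forced by the requirement $\mathcal{R}_{2-k}-\Omega^{[1]}=\tau_h/\Gamma(2-k)$, namely
\[
	\Omega(x):=\left\{\begin{array}{ll}
		0, & x\leqslant 0,\\
		\frac{h^{-k}}{\Gamma(2-k)}, & x\in(0,h),\\
		\frac{x^{-k}}{\Gamma(1-k)}, & x\geqslant h,
	\end{array}\right.
\]
which for $h=1$ is exactly Arestov's function from case~2 in the list after Corollary~\ref{Main_Corollary}. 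A direct integration gives $\Omega^{[1]}(x)=h^{-k}x/\Gamma(2-k)$ for $x\in(0,h)$ and $\Omega^{[1]}(x)=x^{1-k}/\Gamma(2-k)=\mathcal{R}_{2-k}(x)$ for $x\geqslant h$, so that $\mathcal{R}_{2-k}-\Omega^{[1]}=\tau_h/\Gamma(2-k)$ as required; and since $0\leqslant\tau_h(x)\leqslant x^{1-k}\chi_{(0,h)}(x)$, hypotheses~(\ref{limit_condition0})--(\ref{limit_condition}) on $E$ give $\tau_h\in E^1$ (it is moreover trivially in $L_1(\mathbb{R}_+)$, being bounded and supported in $[0,h]$).

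Next I would record the two quantities entering the conclusion of Theorem~\ref{Main_Theorem1}. Using $\Gamma(2-k)=(1-k)\Gamma(1-k)$, the total variation $\bigvee\limits_{\mathbb{R}_+}\Omega$ is the sum of the upward jump $h^{-k}/\Gamma(2-k)$ at $0$, the downward jump of size $kh^{-k}/\Gamma(2-k)$ at $h$, and the total decrease $h^{-k}/\Gamma(1-k)=(1-k)h^{-k}/\Gamma(2-k)$ of $x\mapsto x^{-k}/\Gamma(1-k)$ on $[h,+\infty)$, which add up to $2h^{-k}/\Gamma(2-k)$; and $\|\mathcal{R}_{2-k}-\Omega^{[1]}\|_{E^1}=\|\tau_h\|_{E^1}/\Gamma(2-k)$. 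Then I would verify relation~(\ref{relation}) for all $f\in L^2_{\infty,E}(\mathbb{R}_+)$: by Proposition~\ref{Prop:integral_representationGeneral} one has $D^k_-f(0)=\int_0^{+\infty}\mathcal{R}_{2-k}(t)f''(t)\,dt$, and since $\mathcal{R}_{2-k}-\Omega^{[1]}$ vanishes off $[0,h]$ this reduces~(\ref{relation}) to the identity $\int_{\mathbb{R}_+}f(x)\,d\Omega(x)=\int_0^{+\infty}\Omega^{[1]}(x)f''(x)\,dx$, which follows by integrating by parts twice over $(0,h)$ and over $(h,+\infty)$ separately — the boundary terms at $0$ vanishing because $\Omega^{[1]}(0)=0$, and those at infinity because $x\mapsto x^{-k}$ tends to $0$ while $f$ and its derivatives are suitably controlled for $f\in L^2_{\infty,E}$. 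This reduction and the cancellation of the boundary contributions is the one place where genuine care is needed.

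It then remains, exactly as in the proof of Corollary~\ref{Cor:r=1}, to construct a family $\{\Phi_\varepsilon\}_{\varepsilon>0}\subset W^2_{\infty,E}(\mathbb{R}_+)$ satisfying~(\ref{Stiltjes}) and~(\ref{Holder1}). For each $\varepsilon>0$ I would choose $g_\varepsilon\in E$ with $\|g_\varepsilon\|_E\leqslant 1$ and $\frac{1}{\Gamma(2-k)}\int_0^{+\infty}\tau_h(x)g_\varepsilon(x)\,dx>\|\mathcal{R}_{2-k}-\Omega^{[1]}\|_{E^1}-\varepsilon$; since $\tau_h\geqslant 0$ is supported on $[0,h]$ one may assume $g_\varepsilon\geqslant 0$ with $\mathrm{supp}\,g_\varepsilon\subseteq[0,h]$ and, after replacing it by a truncation $\min\{g_\varepsilon,M\}$ (which does not increase the $E$-norm), that $g_\varepsilon$ is bounded, hence integrable on $[0,h]$. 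Let $\Phi_\varepsilon$ be the second primitive of $g_\varepsilon$ (so $\Phi_\varepsilon''=g_\varepsilon$ on $(0,h)$ and $\Phi_\varepsilon''=0$ on $(h,+\infty)$), normalised by $\Phi_\varepsilon'(h)=0$ — whence $\Phi_\varepsilon$ is non-increasing on $[0,h]$ and constant on $[h,+\infty)$ — and by an additive constant so that $\Phi_\varepsilon(0)=-\Phi_\varepsilon(h)=\|\Phi_\varepsilon\|_{L_\infty(\mathbb{R}_+)}$. Then $\Phi_\varepsilon\in W^2_{\infty,E}(\mathbb{R}_+)$; inequality~(\ref{Holder1}) holds because $(-1)^2\int_{\mathbb{R}_+}(\mathcal{R}_{2-k}-\Omega^{[1]})\Phi_\varepsilon''\,dx=\frac{1}{\Gamma(2-k)}\int_0^h\tau_h g_\varepsilon\,dx>\|\mathcal{R}_{2-k}-\Omega^{[1]}\|_{E^1}-\varepsilon$; and, using the explicit form of $d\Omega$ together with $\Phi_\varepsilon\equiv\Phi_\varepsilon(h)$ on $[h,+\infty)$, one computes $\int_{\mathbb{R}_+}\Phi_\varepsilon\,d\Omega=\frac{h^{-k}}{\Gamma(2-k)}\bigl(\Phi_\varepsilon(0)-\Phi_\varepsilon(h)\bigr)=\frac{2h^{-k}}{\Gamma(2-k)}\|\Phi_\varepsilon\|_{L_\infty(\mathbb{R}_+)}=\bigvee\limits_{\mathbb{R}_+}\Omega\cdot\|\Phi_\varepsilon\|_{L_\infty(\mathbb{R}_+)}$, which is~(\ref{Stiltjes}). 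Theorem~\ref{Main_Theorem1} then yields both the validity and the sharpness of~(\ref{Kolmogorov_inequality_r=2}). The main obstacle is the verification of~(\ref{relation}) (equivalently, the double integration by parts together with its boundary analysis); the choice of $\Omega$, the variation computation, and the construction of $\Phi_\varepsilon$ are routine once the normalisation above is in place.
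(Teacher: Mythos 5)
Your overall architecture is exactly the paper's: the same $\Omega$, the same identification $\mathcal{R}_{2-k}-\Omega^{[1]}=\tau_h/\Gamma(2-k)$, the same variation count $2h^{-k}/\Gamma(2-k)$, and essentially the same extremal family $\Phi_\varepsilon$ (your ``second primitive of $g_\varepsilon$ with $\Phi_\varepsilon'(h)=0$ and $\Phi_\varepsilon(0)=-\Phi_\varepsilon(h)$'' is precisely the paper's $\Phi_\varepsilon(x)=\int_0^h(-x+t/2)g_\varepsilon(t)\,dt+\int_0^x(x-t)g_\varepsilon(t)\,dt$), with the same verification of~(\ref{Stiltjes}) and~(\ref{Holder1}).

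The one genuine gap is in your verification of~(\ref{relation}), which you yourself single out as the delicate point. After invoking Proposition~\ref{Prop:integral_representationGeneral} you reduce~(\ref{relation}) to $\int_{\mathbb{R}_+}f\,d\Omega=\int_0^{+\infty}\Omega^{[1]}(x)f''(x)\,dx$ and propose to prove this by integrating by parts twice, claiming the boundary contributions at infinity vanish ``because $x\mapsto x^{-k}$ tends to $0$.'' That justification addresses the harmless term $\Omega(x)f(x)\sim x^{-k}f(x)$; the problematic boundary term is $\Omega^{[1]}(x)f'(x)=x^{1-k}f'(x)/\Gamma(2-k)$, which does \emph{not} tend to zero for general $f\in L^2_{\infty,E}(\mathbb{R}_+)$ (take $E=L_\infty$ and $f(x)=\sin x$: then $x^{1-k}\cos x$ oscillates with unbounded amplitude, and indeed $\int_0^{+\infty}\Omega^{[1]}f''$ is not even convergent as an improper integral). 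Splitting the well-defined difference $D^k_-f(0)-\int f\,d\Omega$ into two separately ill-defined pieces is the wrong move here. The paper avoids this entirely: it verifies~(\ref{relation}) directly from the Marchaud formula $D^k_-f(0)=\frac{k}{\Gamma(1-k)}\int_0^{+\infty}\frac{f(0)-f(x)}{x^{1+k}}\,dx$, in which every integral is absolutely convergent ($f$ is bounded and $f(0)-f(x)=O(x)$ near $0$); the tail over $(h,+\infty)$ combines algebraically with $\int_h^{+\infty}f\,d\Omega$ into the constant $f(0)h^{-k}/\Gamma(1-k)$, and the only Taylor-type expansion is performed on the finite interval $[0,h]$, where the $f'(0)$-terms cancel and the double integral collapses to $\frac{1}{\Gamma(2-k)}\int_0^h\tau_h(t)f''(t)\,dt$. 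To repair your argument you should either reproduce this finite-interval computation or at least restrict the integration by parts to $[0,h]$ after cancelling the identical $(h,+\infty)$-portions of $\mathcal{R}_{2-k}f''$ and $\Omega^{[1]}f''$ against each other; as written, the step fails.
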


\begin{proof}
For every $h>0$, we define the function
\[
	\Omega(x) := \left\{
	\begin{array}{ll}
		0, & x=0, \\
		\frac{h^{-k}}{\Gamma(2-k)}, & x\in(0,h),\\
		\frac{x^{-k}}{\Gamma(1-k)}, & x \geqslant h.
	\end{array}\right.
\]
Evidently, $\bigvee\limits_G \Omega = \frac{2h^{-k}}{\Gamma(2-k)}$ and $\mathcal{R}_{2-k} - \Omega^{[1]} = \frac{\tau_h}{\Gamma(2-k)}\in E^1$. Moreover, for every $f\in L^2_{\infty,E}\left(\mathbb{R}_+\right)$, there holds true relation~(\ref{relation}). Indeed,
\[
	\begin{array}{l}
		\displaystyle D^k_-f(0) - \int_0^{+\infty} f(x)\, d\Omega(x) \\
			\qquad= \displaystyle D^k_-f(0) - \frac{h^{-k}f(0)}{\Gamma(2-k)} + \frac{kh^{-k}f(h)}{\Gamma(2-k)} + \frac{k}{\Gamma(1-k)}\int_h^{\infty} \frac{f(x)\,dx}{x^{1+k}} \\
			\qquad= \displaystyle \frac{k}{\Gamma(1-k)}\int_0^{h} \frac{f(0) - f(x)}{x^{1+k}}\,dx +  \frac{kh^{-k}f(h)}{\Gamma(2-k)} \\
			\qquad=\displaystyle - \frac{k}{\Gamma(1-k)} \int_0^{h} \int_0^x \frac{(x-t) f''(t)}{x^{1+k}}\, dt\,dx  +  \frac{kh^{-k}}{\Gamma(2-k)}\int_0^h (h-t)f''(t)\,dt\\
			\qquad=\displaystyle\int_0^{+\infty} \left(\mathcal{R}_{2-k}(t) - \Omega^{[1]}(t)\right) f''(t)\,dt.
	\end{array}
\]

Let us construct a family of functions $\left\{\Phi_{\varepsilon}\right\}_{\varepsilon>0}\subset W^{2}_{\infty,E}(G)$ satisfying conditions~(\ref{Stiltjes}) and~(\ref{Holder1}). For every $\varepsilon > 0$, there exists a function $g_{\varepsilon}\in E$, $\left\|g_{\varepsilon}\right\|_{E} \leqslant 1$, such that
\[
	\int_0^{h} \left(\mathcal{R}_{2-k}(x) - \Omega^{[1]}(x)\right) g_{\varepsilon}(x)\,dx > \left\|\mathcal{R}_{2-k} - \Omega^{[1]}\right\|_{E^1} - \varepsilon = \frac{\left\|\tau_h\right\|_{E^1}}{\Gamma(2-k)} - \varepsilon.
\]
Without loss of generality we may assume that $g_{\varepsilon}$ is non-negative on $G$ and $\textrm{supp}\,g_{\varepsilon} = [0,h]$. Next, we define the function $\Phi_{\varepsilon}$ as the second integral of $g_{\varepsilon}$:
\[
	\Phi_{\varepsilon}(x) := \int_0^h \left( - x + t/2\right) g_{\varepsilon}(t)\,dt + \int_0^x (x - t) g_{\varepsilon}(t)\,dt,\qquad x\in\mathbb{R}_+.
\]
Clearly, $\Phi_{\varepsilon} \in W^{2}_{\infty,E}(G)$, $\Phi_\varepsilon(x) = -\Phi_{\varepsilon}(0) = -\left\|\Phi_{\varepsilon}\right\|_{L_\infty\left(\mathbb{R}_+\right)}$, $x\geqslant h$. As a result,
\[
	\begin{array}{rcl}
		\displaystyle\int_0^{+\infty} \Phi_{\varepsilon}(x) \,d\Omega(x) & = & \displaystyle \frac{h^{-k}\left[\Phi_{\varepsilon}(0) - k\Phi_{\varepsilon}(h)\right]}{\Gamma(2-k)} - \frac{k(1-k)}{\Gamma(2-k)}\int_h^{+\infty} \frac{\Phi_{\varepsilon}(x)}{x^{1+k}}\,dx\\[10pt]
		& =& \displaystyle \bigvee_{0}^{+\infty} \Omega\cdot \left\|\Phi_{\varepsilon}\right\|_{L_\infty\left(\mathbb{R}_+\right)}
	\end{array}
\]
and
\[
	\begin{array}{rcl}
		\displaystyle\int_0^{+\infty} \left(\mathcal{R}_{2-k}(x) - \Omega^{[1]}(x)\right)\Phi''_{\varepsilon}(x)\,dx & = & \displaystyle\int_0^h \left(\mathcal{R}_{2-k}(x) - \Omega^{[1]}(x)\right) g_{\varepsilon}(x)\,dx\\ [10pt]
		& > & \displaystyle\left\|\mathcal{R}_{2-k} - \Omega^{[1]}\right\|_{E^1} - \varepsilon.
	\end{array}
\]
Therefore, the function $\Omega$ and the family of functions $\left\{\Phi_{\varepsilon}\right\}_{\varepsilon>0}$ satisfy assumptions of Theorem~\ref{Main_Theorem1}. Hence, inequality~(\ref{Kolmogorov_inequality_r=2}) holds true and is sharp.
\end{proof}

Let us formulate the consequence of Corollaries~\ref{Main_Corollary_Ls},~\ref{Main_Corollary_Ls_prime} and~\ref{Cor:r=2}. For $s>1$, we set
\[
	\varphi_{k,s}(x) := \int_0^h \left(- x + t/2\right) \tau_1^{s'-1}(t)\,dt + \int_0^{x} (x - t) \tau_1^{s'-1}(t)\,dt,\qquad x\in\mathbb{R}_+,
\]
and $\Phi_{k,s} := \left\|\varphi_{k,s}\right\|_{L_s\left(\mathbb{R}_+\right)}^{-1}\cdot \varphi_{k,s}$. Also, we define
\[
	\Phi_{k,1}(x) = \frac{1}{2}\cdot\max{\left\{(1-k)^{1/k} - 2x;\,-(1-k)^{1/k}\right\}},\qquad x\in\mathbb{R}_+.
\]


\begin{cor}
\label{Cor:Main_Corollary_Ls}
Let $k\in(0,1)$, $1\leqslant s\leqslant\infty$ and $s'=s/(s-1)$. Then for every $f\in L_{\infty,s}^2\left(\mathbb{R}_+\right)$, there holds true sharp inequality
\[
  \left\|D^{k}_{-}f\right\|_{L_{\infty}\left(\mathbb{R}_+\right)} \leqslant \frac{\left\|D^{k}_{-}\Phi_{k,s}\right\|_{L_{\infty}\left(\mathbb{R}_+\right)}}{\left\|\Phi_{k,s}\right\|_{L_\infty\left(\mathbb{R}_+\right)}^{1-\lambda}}\left\|f\right\|_{L_{\infty}\left(\mathbb{R}_+\right)}^{1-\lambda} \left\|f''\right\|^{\lambda}_{L_s\left(\mathbb{R}_+\right)},\quad \lambda = \frac{k}{2-1/s}.
\]
\end{cor}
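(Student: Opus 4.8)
The plan is to treat the ranges $1<s\leqslant\infty$ and $s=1$ separately, reducing each to one of Corollaries~\ref{Main_Corollary_Ls} and~\ref{Main_Corollary_Ls_prime}, and in both cases to re-use the function $\Omega$ already constructed in the proof of Corollary~\ref{Cor:r=2}. Concretely, I would fix $h=1$ there, so that
\[
	\Omega(x)=0\ (x=0),\qquad \Omega(x)=\tfrac{1}{\Gamma(2-k)}\ (0<x<1),\qquad \Omega(x)=\tfrac{x^{-k}}{\Gamma(1-k)}\ (x\geqslant 1).
\]
The proof of Corollary~\ref{Cor:r=2} already records $\bigvee_{\mathbb{R}_+}\Omega=\tfrac{2}{\Gamma(2-k)}$, the identity $\mathcal{R}_{2-k}-\Omega^{[1]}=\tau_1/\Gamma(2-k)$, and the validity of relation~(\ref{relation}) for every $f\in L^2_{\infty,\infty}(\mathbb{R}_+)$. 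Since $\tau_1$ is bounded and supported on $[0,1]$, we have $\mathcal{R}_{2-k}-\Omega^{[1]}\in L_{s'}(\mathbb{R}_+)$ for every $1\leqslant s\leqslant\infty$ (here $s'=s/(s-1)$, with $s'=\infty$ when $s=1$), so $\Omega$ meets the integrability hypotheses of both corollaries.

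For $1<s\leqslant\infty$ I would take $\Phi=\Phi_{k,s}$. The profile $\varphi_{k,s}$ is the second antiderivative of $\tau_1^{s'-1}$ formed exactly as $\Phi_\varepsilon$ in the proof of Corollary~\ref{Cor:r=2} with $g_\varepsilon$ replaced by $\tau_1^{s'-1}$; differentiating the explicit formula gives $\varphi_{k,s}'(x)=-\int_x^1\tau_1^{s'-1}(t)\,dt\leqslant 0$ on $[0,1]$ and $\varphi_{k,s}'\equiv 0$ on $[1,\infty)$, so $\varphi_{k,s}$ is non-increasing, constant on $[1,\infty)$, and $\varphi_{k,s}(0)=\tfrac12\int_0^1 t\,\tau_1^{s'-1}(t)\,dt=-\varphi_{k,s}(x)$ for $x\geqslant 1$; hence $\varphi_{k,s}(0)=\|\varphi_{k,s}\|_{L_\infty(\mathbb{R}_+)}$ and $\varphi_{k,s}\equiv-\|\varphi_{k,s}\|_{L_\infty(\mathbb{R}_+)}$ on $[1,\infty)$. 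With the normalisation defining $\Phi_{k,s}$ we then have $\Phi_{k,s}\in W^2_{\infty,s}(\mathbb{R}_+)$; equality~(\ref{Stiltjes}) follows by the same Stieltjes computation as in the proof of Corollary~\ref{Cor:r=2} (the positive mass of $d\Omega$ lies at $0^+$, the negative mass on $[1,\infty)$, and $\Phi_{k,s}$ attains $\pm\|\Phi_{k,s}\|_{L_\infty(\mathbb{R}_+)}$ precisely there), while the Hölder-type equality required in Corollary~\ref{Main_Corollary_Ls} holds because $\tau_1\geqslant 0$ on $[0,1]$ makes $\tau_1^{s'-1}$ the genuine extremiser in Hölder's inequality for $\tau_1/\Gamma(2-k)\in L_{s'}(\mathbb{R}_+)$. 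Corollary~\ref{Main_Corollary_Ls} now delivers the asserted inequality with $\lambda=k/(2-1/s)$, its hypothesis $k\in(0,2-1/s)\setminus\mathbb{N}$ being automatic since $0<k<1\leqslant 2-1/s$.

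For $s=1$ I would instead apply Corollary~\ref{Main_Corollary_Ls_prime}, available because $k\in(0,1)=(0,r-1)$ and $\tau_1/\Gamma(2-k)\in L_\infty(\mathbb{R}_+)$, with $\Phi=\Phi_{k,1}$. From the definition, $\Phi_{k,1}$ is piecewise linear with $\Phi_{k,1}'=-1$ on $(0,x^\ast)$ and $\Phi_{k,1}'=0$ on $(x^\ast,\infty)$, where $x^\ast=(1-k)^{1/k}\in(0,1)$; thus $\bigvee_{\mathbb{R}_+}\Phi_{k,1}'=1$ and $d\Phi_{k,1}'$ is the unit point mass at $x^\ast$. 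Since $\tau_1$ is strictly concave on $(0,1)$ and vanishes at the endpoints, $x^\ast$ is its unique maximiser, so the equality $(-1)^{r}\int_{\mathbb{R}_+}(\mathcal{R}_{2-k}-\Omega^{[1]})\,d\Phi_{k,1}'=\|\mathcal{R}_{2-k}-\Omega^{[1]}\|_{L_\infty(\mathbb{R}_+)}$ reduces to $\tau_1(x^\ast)=\|\tau_1\|_{L_\infty(\mathbb{R}_+)}$, and~(\ref{Stiltjes}) is verified as before using $\Phi_{k,1}(0)=\tfrac12(1-k)^{1/k}=\|\Phi_{k,1}\|_{L_\infty(\mathbb{R}_+)}$ and $\Phi_{k,1}\equiv-\tfrac12(1-k)^{1/k}$ on $[x^\ast,\infty)$. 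Corollary~\ref{Main_Corollary_Ls_prime} then gives the stated inequality with $\lambda=k/(r-1)=k$, which coincides with $k/(2-1/s)$ at $s=1$; this completes the proof.

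I expect the only genuinely delicate points to be the sign and monotonicity bookkeeping that turns~(\ref{Stiltjes}) into an equality for $\Phi_{k,s}$ and $\Phi_{k,1}$ — which is why the explicit extreme-value structure ($\Phi$ equal to $+\|\Phi\|_{L_\infty(\mathbb{R}_+)}$ at $0$ and to $-\|\Phi\|_{L_\infty(\mathbb{R}_+)}$ on the region carrying the negative part of $d\Omega$) must be recorded carefully — together with the elementary but essential observation that $\tau_1(x)=x^{1-k}-x\geqslant 0$ on $[0,1]$, since this is what guarantees that $\tau_1^{s'-1}$ (respectively, the point mass at the maximiser of $\tau_1$) is the true Hölder extremiser and not a signed variant. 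Everything else is a routine specialisation of the general corollaries of Section~\ref{S_new}.
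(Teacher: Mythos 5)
Your proof is correct and follows exactly the route the paper intends (the paper states this corollary without writing out a proof, only citing Corollaries~\ref{Main_Corollary_Ls}, \ref{Main_Corollary_Ls_prime} and~\ref{Cor:r=2}): you take the $\Omega$ from the proof of Corollary~\ref{Cor:r=2} with $h=1$, verify (\ref{Stiltjes}) and the H\"older-equality condition for $\Phi_{k,s}$ (using $\tau_1\geqslant 0$) when $s>1$, and for $s=1$ check that the jump point $(1-k)^{1/k}$ of $\Phi_{k,1}'$ is the maximiser of $\tau_1$. You also correctly read the normalisation as $\bigl\|\varphi''_{k,s}\bigr\|_{L_s(\mathbb{R}_+)}^{-1}\varphi_{k,s}$ (the paper's displayed $\bigl\|\varphi_{k,s}\bigr\|_{L_s(\mathbb{R}_+)}^{-1}$ is evidently a typo, since $\varphi_{k,s}\notin L_s(\mathbb{R}_+)$ for finite $s$), which is what makes $\Phi_{k,s}\in W^2_{\infty,s}(\mathbb{R}_+)$ and the constant come out right.
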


\subsection{Case $G = \mathbb{R}_+$, $1< s\leqslant\infty$ and $k\in\left(1, 2-1/s\right)$}
\label{SubSec:R+k1..2r=2}

Let $1< s\leqslant\infty$, $s' = s/(s-1)$ and $k\in\left(1,2-1/s\right)$. Consider the set
\[
	M := \left\{(a,b)\in(0,1)^2\,:\,a\leqslant b\right\}
\]
and for every $(a,b)\in M$, we define the function
\[
	\omega(a,b;x) = \left\{\begin{array}{ll}
		0, & x=0, \\
		\displaystyle a^{-1}(1-b)^{-1}\cdot \left(1-b-\left(1-b^{1-k}\right)(1-a)\right), & x\in(0,a], \\
		\displaystyle (1-b)^{-1}\cdot\left(1-b^{1-k}\right), & x\in (a,1),\\
		\displaystyle (1-k)x^{-k}, & x\geqslant 1.
	\end{array}\right.
\]
For $x\in\mathbb{R}_+$, we consider functions $\tau(a,b;x) := \Gamma(2-k)\cdot \mathcal{R}_{2-k}(x) - \omega^{[1]}(a,b;x)$ and
\[
	\displaystyle \varphi(a,b;x) := \displaystyle \int_0^{a}\left(-x+t/2\right)\cdot\tau_{(s')}(a,b;t)\,dt + \int_{0}^{x}(x-t)\cdot\tau_{(s')}(a,b;t)\,dt
\]
where $g_{(s')} := |g|^{s'-1}\textrm{sign}\,g$. Below in Lemma~\ref{Lem:Existence_R+_k_1..2} we shall show that the system~(\ref{2nd_equation}) has a unique solution $\left(a_{k,s},b_{k,s}\right)$ on $M$. For convenience, we denote the functions $\omega\left(a_{k,s},b_{k,s};\cdot\right)$, $\tau\left(a_{k,s},b_{k,s};\cdot\right)$, $\varphi\left(a_{k,s},b_{k,s};\cdot\right)$ by $\omega_{k,s}$, $\tau_{k,s}$, $\varphi_{k,s}$ respectively. The graphs of functions $\omega^{[1]}(a,b;\cdot)$, $\tau(a,b;\cdot)$ and $\varphi_{k,s}$ are shown in Figure~\ref{fig:1.3} on page~\pageref{fig:1.3}.

The next proposition is the consequence of Corollary~\ref{Main_Corollary_Ls}.

\begin{cor}
\label{Cor:Kolmogorov_inequality_R+_k_1..2}
Let $1<s\leqslant\infty$, $s' = s/(s-1)$, $k\in\left(1,2-1/s\right)$ and $\Phi_{k,s} := \left\|\varphi_{k,s}\right\|_{L_s\left(\mathbb{R}_+\right)}^{-1}\cdot \varphi_{k,s}$. Then for every $f\in L_{\infty,s}^{2}\left(\mathbb{R}_+\right)$, there hold true sharp inequality 
\[
  \left\|D^{k}_{-}f\right\|_{L_\infty\left(\mathbb{R}_+\right)} \leqslant \frac{\left\|D^{k}_{-}\Phi_{k,s}\right\|_{L_\infty\left(\mathbb{R}_+\right)}}{\left\|\Phi_{k,s}\right\|_{L_\infty\left(\mathbb{R}_+\right)}^{1-\lambda}} \left\|f\right\|_{L_\infty\left(\mathbb{R}_+\right)}^{1-\lambda} \left\|f''\right\|_{L_s\left(\mathbb{R}_+\right)}^{\lambda},\quad \lambda = \frac{k}{2-1/s},
\]
\end{cor}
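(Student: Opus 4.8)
The plan is to realize the corollary as a direct instance of Corollary~\ref{Main_Corollary_Ls} applied with $G=\mathbb{R}_+$, $r=2$, the given $k\in(1,2-1/s)$, and the specific functions $\Omega := \Gamma(2-k)^{-1}\omega_{k,s}$ and $\Phi := \Phi_{k,s}$. To invoke that corollary I must verify four things: (i) $\Omega\in V(\mathbb{R}_+)$ and $\left(\mathcal{R}_{2-k}-\Omega^{[1]}\right)\in L_{s'}(\mathbb{R}_+)$; (ii) the representation formula~(\ref{relation}) holds for every $f\in L^2_{\infty,s}(\mathbb{R}_+)$; (iii) $\Phi_{k,s}\in W^2_{\infty,s}(\mathbb{R}_+)$ and satisfies the ``extremal'' pair of equalities~(\ref{Stiltjes}) and the $L_{s'}$-H\"older equality; and (iv) implicitly, that the defining system~(\ref{2nd_equation}) for the parameters $(a_{k,s},b_{k,s})$ is solvable --- this is supplied by Lemma~\ref{Lem:Existence_R+_k_1..2}, which I would cite. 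Once these are in place, Corollary~\ref{Main_Corollary_Ls} yields exactly the multiplicative inequality with $\lambda=k/(2-1/s)$ and the sharpness with $\Phi_{k,s}$ (rescaled by $\Phi_h(\cdot)=h^{r-1/s}\Phi((\cdot)/h)$) as the extremizer, so only the bookkeeping of constants remains.

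First I would check (i): the function $\omega_{k,s}$ is, by construction, a bounded piecewise-smooth function that is constant on $(0,a)$ and on $(a,1)$ and equals $(1-k)x^{-k}$ for $x\geqslant 1$; since $k>1$ this tail is integrable-variation near infinity, so $\bigvee_{\mathbb{R}_+}\Omega<\infty$. For the $L_{s'}$-membership I note $\tau(a,b;x)=\Gamma(2-k)\mathcal{R}_{2-k}(x)-\omega^{[1]}(a,b;x)$ is supported on $[0,1]$ (the pieces are designed so that $\mathcal{R}_{2-k}$ and $\omega^{[1]}$ agree for $x\geqslant 1$, which is exactly what the boundary conditions in the definition of $\omega$ enforce), hence bounded and compactly supported, so it lies in every $L_{s'}$. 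Step (ii), the representation~(\ref{relation}), I would verify by the same integration-by-parts computation carried out explicitly in the proof of Corollary~\ref{Cor:r=2}: write $D^k_-f(0)-\int f\,d\Omega$, use Proposition~\ref{Prop:continuity} to justify $D^k_-f(0)=\frac{1}{\Gamma(2-k)}\!\int_0^\infty (-1)^2 t^{1-k}f''(t)\,dt$ via~(\ref{integral_representationGeneral}), integrate the Stieltjes term twice by parts against $f$, and collect terms; the algebra is routine once the breakpoints $0,a,1$ of $\omega$ are tracked. Step (iii) is the heart: $\Phi_{k,s}$ is the normalized second integral of $\tau_{(s')}(a,b;\cdot)$, so $\Phi_{k,s}''$ is (up to the normalizing constant) $\left|\tau_{k,s}\right|^{s'-1}\mathrm{sign}\,\tau_{k,s}$, which by the equality case of H\"older's inequality gives $(-1)^2\!\int(\mathcal{R}_{2-k}-\Omega^{[1]})\Phi_{k,s}''=\|\mathcal{R}_{2-k}-\Omega^{[1]}\|_{L_{s'}}$ automatically; the condition $\|\Phi_{k,s}''\|_{L_s}\leqslant 1$ follows from the normalization, and the condition $\int\Phi_{k,s}\,d\Omega=\bigvee\Omega\cdot\|\Phi_{k,s}\|_{L_\infty}$ is precisely what the system~(\ref{2nd_equation}) --- i.e.\ the choice of $(a_{k,s},b_{k,s})$ --- is rigged to force, via the sign structure of $\Phi_{k,s}$ (it should be non-positive, with $\Phi_{k,s}(x)\equiv-\|\Phi_{k,s}\|_{L_\infty}$ for $x\geqslant 1$ and appropriate monotonicity on $[0,1]$, so that the Stieltjes integral against the increasing/decreasing pieces of $\Omega$ picks up the full variation).

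The main obstacle I anticipate is step (iii), and specifically the equality~(\ref{Stiltjes}): unlike the H\"older equality, which is forced mechanically by taking the $(s'-1)$-power, the equality $\int\Phi_{k,s}\,d\Omega=\bigvee\Omega\cdot\|\Phi_{k,s}\|_{L_\infty}$ requires a genuine sign/monotonicity analysis of $\Phi_{k,s}$ together with the two-jump structure of $\Omega$ at $0$ and at $a$ plus the smooth decreasing tail, and this is exactly where the two free parameters $a,b$ are consumed --- this is the content of Lemma~\ref{Lem:Existence_R+_k_1..2}, which I would prove (or cite as already proved) separately. A secondary technical point is confirming $\Phi_{k,s}\in L_\infty(\mathbb{R}_+)$ and that it genuinely flattens out to a constant for $x\geqslant 1$; this needs $\int_0^1\tau_{(s')}(a,b;t)\,dt=0$ and $\int_0^1 t\,\tau_{(s')}(a,b;t)\,dt$ finite, the first of which is one of the equations in~(\ref{2nd_equation}). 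With Lemma~\ref{Lem:Existence_R+_k_1..2} granting a solution $(a_{k,s},b_{k,s})\in M$, all hypotheses of Corollary~\ref{Main_Corollary_Ls} are met, and the asserted sharp inequality with $\lambda=k/(2-1/s)$ follows immediately, with $\Phi_{k,s}$ (suitably rescaled) as the extremal function.
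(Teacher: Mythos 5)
Your plan coincides with the paper's proof: set $\Omega:=\omega_{k,s}/\Gamma(2-k)$, check relation~(\ref{relation}) by the same integration-by-parts computation as in Corollary~\ref{Cor:r=2}, use Lemma~\ref{Lem:Existence_R+_k_1..2} for the solvability of~(\ref{2nd_equation}), note that the H\"older equality is automatic from $\Phi_{k,s}''$ being proportional to $\tau_{(s')}(a_{k,s},b_{k,s};\cdot)$, and invoke Corollary~\ref{Main_Corollary_Ls}. One correction to your step (iii): for $k\in(1,2-1/s)$ the extremal function is \emph{not} non-positive with $\Phi_{k,s}\equiv-\|\Phi_{k,s}\|_{L_\infty}$ on $[1,+\infty)$ (that is the picture for $k\in(0,1)$); here $\omega_{k,s}$ jumps up at $0$, down at $a_{k,s}$, and increases on $[1,+\infty)$, so the correct structure --- and the one the paper verifies --- is $\Phi_{k,s}(0)=-\Phi_{k,s}(a_{k,s})=\|\Phi_{k,s}\|_{L_\infty(\mathbb{R}_+)}$ with $\Phi_{k,s}$ decreasing on $[0,a_{k,s}]$, increasing on $[a_{k,s},+\infty)$, and constant equal to $+\|\Phi_{k,s}\|_{L_\infty(\mathbb{R}_+)}$ for $x\geqslant 1$; with the sign pattern you propose, equality~(\ref{Stiltjes}) would fail.
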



We start with the proof of auxiliary

\begin{lem}
\label{Lem:Existence_R+_k_1..2}
The system of the following equations has a unique solution on $M$:
\begin{equation}
\label{2nd_equation}
	\left\{\begin{array}{ll}
		\displaystyle F_1(a,b) := \int_{a}^{1} \tau_{(s')}(a,b;t)\,dt = 0, \\ [8pt]
		\displaystyle F_2(a,b) := \int_0^1 t\cdot \tau_{(s')}(a,b;t)\,dt = 0.
	\end{array}\right.
\end{equation}
\end{lem}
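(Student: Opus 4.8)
The goal is to show the system $F_1(a,b)=F_2(a,b)=0$ has a unique solution on $M=\{(a,b)\in(0,1)^2:a\le b\}$. The natural approach is a degree-theoretic / fixed-point argument combined with monotonicity: first establish that for each fixed $b\in(0,1)$ the equation $F_1(a,b)=0$ determines $a$ uniquely as a function $a=\alpha(b)$ (using that $\tau_{(s')}(a,b;\cdot)$ changes sign in a controlled way on $(a,1)$), and then reduce the whole problem to the scalar equation $\Psi(b):=F_2(\alpha(b),b)=0$, which one solves by the intermediate value theorem together with strict monotonicity of $\Psi$.

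First I would analyze the structure of $\tau(a,b;\cdot)=\Gamma(2-k)\mathcal R_{2-k}(\cdot)-\omega^{[1]}(a,b;\cdot)$. Since $\omega(a,b;\cdot)$ is a step function on $(0,1)$ with value $a^{-1}(1-b)^{-1}(1-b-(1-b^{1-k})(1-a))$ on $(0,a)$ and $(1-b)^{-1}(1-b^{1-k})$ on $(a,1)$, its first integral $\omega^{[1]}$ is piecewise linear on $[0,1]$, so $\tau(a,b;x)=x^{1-k}-\omega^{[1]}(a,b;x)$ is a concave-then-something explicit function whose zeros and sign are computable. The key qualitative fact I expect: $\tau(a,b;\cdot)$ vanishes at $x=0$, is positive just to the right of $0$ (or has a single sign change on $(0,a)$), and the two equations are precisely the "orthogonality" conditions (zeroth and first moment of $\tau_{(s')}$ on the relevant interval vanish) that force the extremal spline $\varphi(a,b;\cdot)$ to have the right boundary behavior — this is why the conditions of Corollary~\ref{Main_Corollary_Ls} (equations (\ref{Stiltjes}) and the Hölder-equality) get satisfied. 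I would verify $F_1,F_2$ are continuous on $M$ and examine their signs on the boundary of $M$ (the edges $a=0$, $b=1$, and the diagonal $a=b$): as $a\to 0^+$ or $b\to 1^-$ the integrals should have definite signs, which pins the solution to the interior.

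For existence, I would compute $\partial F_1/\partial a$ and show it has a fixed sign (negative, say) for $(a,b)\in M$, so $a\mapsto F_1(a,b)$ is strictly monotone and, combined with sign changes at the endpoints $a=0$ and $a=b$, yields a unique root $a=\alpha(b)$ which is $C^1$ in $b$ by the implicit function theorem. Substituting, set $\Psi(b):=F_2(\alpha(b),b)$; differentiating, $\Psi'(b)=\partial_a F_2\cdot\alpha'(b)+\partial_b F_2$, and using $\alpha'(b)=-\partial_b F_1/\partial_a F_1$ one gets $\Psi'(b)$ as a $2\times2$ Jacobian determinant of $(F_1,F_2)$ divided by $\partial_a F_1$. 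I would show this Jacobian determinant does not vanish on $M$ — this is the crux — so $\Psi$ is strictly monotone; together with $\Psi$ having opposite signs at $b=b_{\min}$ (where $\alpha(b)$ first becomes admissible) and $b=1^-$, the IVT gives a unique zero $b_{k,s}$, hence a unique solution $(a_{k,s},b_{k,s})=(\alpha(b_{k,s}),b_{k,s})$.

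**The main obstacle.** The hard part will be the sign/non-vanishing analysis of the partial derivatives $\partial_a F_1$ and of the $2\times 2$ Jacobian $\frac{\partial(F_1,F_2)}{\partial(a,b)}$. Because $\tau_{(s')}=|\tau|^{s'-1}\operatorname{sign}\tau$ and $\tau(a,b;\cdot)$ itself depends on $(a,b)$ through the breakpoint $a$ and through the step heights of $\omega$, differentiating under the integral sign produces boundary terms at $t=a$ and volume terms involving $\partial_a\tau$, $\partial_b\tau$ weighted by $|\tau|^{s'-2}$; one must track the sign of $\tau$ on the subintervals $(0,a)$ and $(a,1)$ carefully (it changes sign exactly once, which is itself something to prove) to conclude the relevant quantities are of one sign. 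I expect this to require a somewhat delicate but ultimately elementary case analysis exploiting the explicit piecewise-linear form of $\omega^{[1]}$; for $s=\infty$ (so $s'=1$, $\tau_{(s')}=\operatorname{sign}\tau$) the computation simplifies to locating the single sign-change point of $\tau$ and the integrals become explicit, and I would treat that case first as a model before handling general $s'>1$.
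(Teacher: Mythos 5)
Your overall strategy --- reduce to a one--dimensional problem along the zero curve of one of the two functions, get existence by the intermediate value theorem, and uniqueness by strict monotonicity along that curve --- is exactly the paper's strategy. The difference lies in the step you yourself flag as the crux, and there you have a genuine gap: you propose to obtain monotonicity of $\Psi(b)=F_2(\alpha(b),b)$ from non--vanishing of the Jacobian $\partial(F_1,F_2)/\partial(a,b)$ on all of $M$, but you do not establish this, and it is considerably harder than what is actually needed. The paper instead observes the coordinate--wise monotonicity: $F_2$ strictly increases in both $a$ and $b$, while $F_1$ strictly increases in $b$ and strictly decreases in $a$. This alone gives uniqueness with no Jacobian computation: the zero set of $F_1$ is an increasing curve $b=\beta(a)$ (equivalently $a=\alpha(b)$ increasing, in your notation), and $F_2$ restricted to an increasing curve is strictly increasing, hence has at most one zero. (The paper actually parametrizes the zero set of $F_2$, as a decreasing curve $b=\varrho(a)$, and runs the IVT on $F_1$ along it, checking signs at the two corner points $(0,b^*)$ and $(a^*,a^*)$; but the roles of $F_1$ and $F_2$ are interchangeable here.)

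Two further remarks that would close your argument. First, the sign pattern you only ``expect'' --- that $\tau(a,b;\cdot)$ is positive on $(0,b)$ and negative on $(b,1)$, with a single sign change at $x=b$ --- is the first thing the paper records and is needed for all the sign checks; it should be verified, not assumed. Second, the monotonicity of $F_1$ in $a$ is easier than your discussion of boundary--plus--volume terms suggests: a direct computation with the explicit piecewise--linear form of $\omega^{[1]}(a,b;\cdot)$ shows that $\tau(a,b;x)$ does not depend on $a$ at all for $x>a$, so $\partial_a F_1=-\tau_{(s')}(a,b;a)<0$ comes only from the moving lower limit, with no integrand contribution. With these two observations your plan collapses into the paper's proof and the Jacobian analysis becomes unnecessary.
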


We remark that in some cases the pair $\left(a_{k,s},b_{k,s}\right)$ can be found explicitly, e. g. $a_{k,\infty} = \sqrt{2}-1$ and $b_{k,\infty} = 1/\sqrt{2}$.

\begin{proof} First, we observe that for every $(a,b)\in M$, the function $\tau(a,b;\cdot)$ is positive on $(0,b)$, is negative on $(b,1)$, and $\textrm{supp}\,\tau(a,b;\cdot) = [0,1]$. Next, the functions $F_1$ and $F_2$ are continuous on $M$, and can be continuously extended on a wider set $\widetilde{M} := \left\{(a,b)\in[0,1)^2\,:\,b>0,\,a\leqslant b\right\}$. Let us prove that system~(\ref{2nd_equation}) has a unique solution on $\widetilde{M}$. First, we note that $F_2$ strictly increases in variables $a$ and $b$, while $F_1$ strictly increases in $b$ and strictly decreases in $a$. Hence, the system~(\ref{2nd_equation}) could have only one solution. Next, we have
\[
	\begin{array}{rcl}
		\displaystyle\lim\limits_{b\to 0^+} F_2(b,b) & \leqslant & \displaystyle\lim\limits_{b\to 0^+} \left\{\int_0^{b} t^{(1-k)(s'-1) + 1}\,dt - \int_{b}^{1} t\cdot \tau_{(s')}(b,b;t)\,dt \right\} = -\infty, \\[8pt]
		\displaystyle\lim\limits_{b\to 1^-}F_2(b,b) & = & \displaystyle\int_0^{1} t \left(t^{1-k}-t\right)^{s'-1}\,dt > 0,\\[8pt]
		\displaystyle\lim\limits_{b\to 1^-} F_2(0,b) & = & \displaystyle\int_0^{1} t\left(t^{1-k} - 1 + (1-k)(1-t) \right)^{s'-1}\,dt > 0.
	\end{array}
\]
Hence, there exist points $a^{*},b^{*}\in (0,1)$ such that $F_2\left(0,b^*\right) = F_2\left(a^{*}, a^{*}\right) = 0$. Taking into account continuity of function $F_2$ and its monotonicity in both variables we conclude that for every $a\in\left[0,a^{*}\right]$ there exists $b = \varrho(a) \in\left[a^*,b^*\right]$ such that $F_2\left(a,b\right) = 0$. Moreover, the function $\varrho$ is continuous and is decreasing on the interval $\left[0,a^*\right]$ because it has an inverse function. Finally, we observe that
\[
	\begin{array}{rcl}
		\displaystyle F_1\left(0,b^*\right) & = & \displaystyle \frac{1}{b^*}\left\{b^* \int_{0}^{1} \tau_{(s')}\left(0,b^*;t\right)\,dt\right\} \\ [10pt]
		& > & \displaystyle \frac{1}{b^*}\left\{\int_{0}^{b^*} t \cdot\tau_{(s')}\left(0,b^*;t\right)\,dt\right\} = \frac{F_2\left(0,b^*\right)}{b^*} = 0,
	\end{array}
\]
and
\[
	F_1\left(a^*,a^*\right) = \int_{a^*}^{1} \tau_{(s')} \left(a^*,a^*;t\right)\,dt< 0.
\]
Hence, there exists $a_0\in\left(0,a^*\right)$ such that $F_1\left(a_0,\varrho\left(a_0\right)\right) = 0$. The latter implies that $\left(a_0,\varrho\left(a_0\right)\right) \in M$ and satisfies system~(\ref{2nd_equation}).
\end{proof}

\begin{proof}[Proof of Corollary~\ref{Cor:Kolmogorov_inequality_R+_k_1..2}]
We set $\Omega := \frac{\omega_{k,s}}{\Gamma(2-k)}$. It is easy to check that for $f\in L^2_{\infty,s}\left(\mathbb{R}_+\right)$,
\[
  D^{k}_{-}f(0) - \int_0^{+\infty} f(t)\,d\Omega(t) = \int_0^{+\infty} \left(\mathcal{R}_{2-k}(t) - \Omega^{[1]}(t)\right) f''(t)\,dt.
\]
Moreover, $\Phi_{k,s}\in W^2_{\infty,s}\left(\mathbb{R}_+\right)$, $\Phi_{k,s}(0) = -\Phi_{k,s}\left(a_{k,s}\right) = \left\|\Phi_{k,s}\right\|_{L_\infty\left(\mathcal{R}_+\right)}$, $\Phi_{k,s}(x) = \Phi_{k,s}(0)$ for every $x\geqslant h$, $\Phi_{k,s}$ decreases on $\left[0,a_{k,s}\right]$ and increases on $\left[a_{k,s},+\infty\right)$. Hence, the functions $\Omega$ and $\Phi_{k,s}$ satisfy conditions of Corollary~\ref{Main_Corollary_Ls}.
\end{proof}

\subsection{Case $G = \mathbb{R}$, $1\leqslant s\leqslant \infty$ and $k\in(0,1)$}
\label{SubSec:Rk0..1r=2}

Let $k\in(0,1)$, $1\leqslant s\leqslant \infty$ and $s' = s/(s-1)$. For $p\in\left[0,k/(1-k)\right]$, we consider the function
\[
	\omega(p;x) = \left\{\begin{array}{ll}
		0, & x\leqslant -p, \\
		\displaystyle -(1+p)^{-1}, & x\in(-p,1], \\
		\displaystyle (1-k)x^{-k}, & x\geqslant 1.
	\end{array}\right.
\]
For $x\in\mathbb{R}$, we consider the function $\tau(p;x) := \Gamma(2-k)\mathcal{R}_{2-k}(x) - \omega^{[1]}(p;x)$ and define
\[
	\varphi_s(p;x) := \frac 12\int_{-p}^{1}t\cdot\tau_{(s')}(p;t)\,dt \displaystyle + \int_{-p}^{x}(x-t)\cdot\tau_{(s')}(p;t)\,dt,\qquad \textrm{when}\quad s>1,
\]
\[
	\varphi_1(p;x) := \left\{
		\begin{array}{ll}
		\left(1+p\right)/4, & x\leqslant -p, \\
		\left(1+p-2x\right)/4, & x\in\left(-p,1\right),\\
		-\left(1+p\right)/4, & x\geqslant 1.
		\end{array}
	\right.
\]
Below in Lemma~\ref{Lem:Existence_R_k_0..1} we shall prove that equations~(\ref{1st_equation_v1}) have a unique solution. For convenience, we denote by $p_{k,s}$, $s>1$, the solution to the first equation in~(\ref{1st_equation_v1}) and by $p_{k,1}$ the solution to the second equation in~(\ref{1st_equation_v1}). In addition, we denote the functions $\omega\left(p_{k,s};\cdot\right)$, $\tau\left(p_{k,s};\cdot\right)$ and $\varphi_s\left(p_{k,s};\cdot\right)$ by $\omega_{k,s}$, $\tau_{k,s}$ and $\varphi_{k,s}$ respectively. The graphs of functions $\omega^{[1]}(p;\cdot)$, $\tau(p;\cdot)$ and $\varphi_{k,s}$ are shown in Figure~\ref{fig:1.5}  on page~\pageref{fig:1.5}.

There holds true the following consequence of Corollaries~\ref{Main_Corollary_Ls} and~\ref{Main_Corollary_Ls_prime}.

\begin{cor}
\label{Cor:Kolmogorov_inequality_R_k_0..1}
Let $1\leqslant s\leqslant\infty$, $s' = s/(s-1)$, $k\in(0,1)$ and $h>0$. Then for every $f\in L_{\infty,s}^{2}\left(\mathbb{R}\right)$, there hold true sharp inequality
\[
	\left\|D^{k}_{-}f\right\|_{L_\infty\left(\mathbb{R}\right)} \leqslant \frac{\left\|D^{k}_{-}\Phi_{k,s}\right\|_{L_\infty\left(\mathbb{R}\right)}}{\left\|\Phi_{k,s}\right\|_{L_\infty\left(\mathbb{R}\right)}^{1-\lambda}}\, \left\|f\right\|_{L_\infty\left(\mathbb{R}\right)}^{1-\lambda} \left\|f''\right\|_{L_s\left(\mathbb{R}\right)}^{\lambda},\quad \lambda = \frac{k}{2-1/s},
\]
where $\Phi_{k,s} := \left\|\varphi''_{k,s}\right\|^{-1}_{L_s\left(\mathbb{R}\right)}\cdot \varphi_{k,s}$, $s>1$, and $\Phi_{k,1}:=\varphi_{k,1}$.
\end{cor}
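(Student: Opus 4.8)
The plan is to derive the inequality from Corollary~\ref{Main_Corollary_Ls} when $1<s\leqslant\infty$, and from Corollary~\ref{Main_Corollary_Ls_prime} when $s=1$, taking in both cases $r=2$, the weight $\Omega:=\Gamma(2-k)^{-1}\omega_{k,s}$, and $\Phi_{k,s}$ in the role of the extremal function (for $s=1$, of the function with piecewise-constant derivative). Thus the whole work consists in checking the hypotheses of those corollaries for this $\Omega$ and this $\Phi_{k,s}$: that $\Omega\in V(\mathbb{R})$ with $\mathcal{R}_{2-k}-\Omega^{[1]}\in L_{s'}(\mathbb{R})$ (respectively $\in L_\infty(\mathbb{R})$); that relation~(\ref{relation}) holds on $L^2_{\infty,s}(\mathbb{R})$; that $\Phi_{k,s}\in W^2_{\infty,s}(\mathbb{R})$; and that $\Phi_{k,s}$ turns both the Stieltjes inequality~(\ref{Stiltjes}) and the relevant H\"older inequality into equalities.

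I would first settle the structural requirements. The function $\omega_{k,s}$ equals $0$ on $(-\infty,-p_{k,s}]$, the constant $(1+p_{k,s})^{-1}$ on $(-p_{k,s},1]$, and $(1-k)x^{-k}$ on $[1,+\infty)$, so $\Omega\in V(\mathbb{R})$ and $\bigvee_{\mathbb{R}}\Omega$ is the sum of the two jumps of $\Gamma(2-k)^{-1}\omega_{k,s}$ at $-p_{k,s}$ and at $1$ and of the variation of its decreasing tail. Since $\Gamma(2-k)\,\mathcal{R}_{2-k}'(x)=(1-k)x^{-k}=\omega_{k,s}(x)$ for $x\geqslant1$, the primitives $\Gamma(2-k)\mathcal{R}_{2-k}$ and $\omega_{k,s}^{[1]}$ differ by a constant on $[1,+\infty)$, and the value $(1+p_{k,s})^{-1}$ of $\omega_{k,s}$ on $(-p_{k,s},1]$ is precisely the one making $\omega_{k,s}^{[1]}(1)=1=\Gamma(2-k)\mathcal{R}_{2-k}(1)$; hence $\mathcal{R}_{2-k}-\Omega^{[1]}=\Gamma(2-k)^{-1}\tau_{k,s}$ is supported on $[-p_{k,s},1]$ and bounded there, so it belongs to $L_{s'}(\mathbb{R})$ for every $s\in(1,\infty]$ and to $L_\infty(\mathbb{R})$ for $s=1$. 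Similarly $\varphi_{k,s}''=\tau_{(s')}(p_{k,s};\cdot)$ (and $\varphi_{k,1}'$ piecewise constant) is bounded with compact support, so $\Phi_{k,s}\in W^2_{\infty,s}(\mathbb{R})$. Relation~(\ref{relation}) I would verify exactly as in the proof of Corollary~\ref{Cor:r=2}: write $D^k_-f(0)=\int_0^{+\infty}\mathcal{R}_{2-k}(t)f''(t)\,dt$ by Proposition~\ref{Prop:continuity}, transform $\int_{\mathbb{R}}f\,d\Omega$ by integration by parts (its atomic part at $-p_{k,s}$ and $1$ and its density part $\int_{(1,+\infty)}f\,\Omega'$ being explicit), using $\Omega\equiv0$ near $-\infty$ and the compact support of $\mathcal{R}_{2-k}-\Omega^{[1]}$ to dispose of all boundary terms, and collect $(-1)^2\int_{\mathbb{R}}(\mathcal{R}_{2-k}-\Omega^{[1]})f''\,dx$; equivalently, expand $f$ by Taylor's formula with integral remainder against the measure $d\Omega$.

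The crux is the extremality of $\Phi_{k,s}$. For $1<s\leqslant\infty$ the H\"older equality required by Corollary~\ref{Main_Corollary_Ls} is automatic from the construction $\Phi_{k,s}=\|\varphi_{k,s}''\|_{L_s}^{-1}\varphi_{k,s}$, $\varphi_{k,s}''=\tau_{(s')}(p_{k,s};\cdot)$: the identities $\tau\,\tau_{(s')}=|\tau|^{s'}$ and $\|\tau_{(s')}\|_{L_s}=\|\tau\|_{L_{s'}}^{\,s'-1}$ give $\int_{\mathbb{R}}(\mathcal{R}_{2-k}-\Omega^{[1]})\Phi_{k,s}''\,dx=\Gamma(2-k)^{-1}\|\tau_{k,s}\|_{L_{s'}}=\|\mathcal{R}_{2-k}-\Omega^{[1]}\|_{L_{s'}(\mathbb{R})}$. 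For the Stieltjes equality~(\ref{Stiltjes}) I would use that $p_{k,s}$ solves the first equation of~(\ref{1st_equation_v1}), $\int_{-p_{k,s}}^{1}\tau_{(s')}(p_{k,s};t)\,dt=0$: a short analysis shows $\tau_{k,s}$ is negative on $(-p_{k,s},x_0)$ and positive on $(x_0,1)$ for a single $x_0\in(0,1)$, so $\varphi_{k,s}'=\int_{-p_{k,s}}^{(\cdot)}\tau_{(s')}(p_{k,s};t)\,dt$ is $\leqslant 0$ on $(-p_{k,s},1)$ and vanishes at both endpoints; consequently $\varphi_{k,s}$ is constant on $(-\infty,-p_{k,s}]$, strictly decreasing on $[-p_{k,s},1]$, and constant on $[1,+\infty)$, with the two constants equal to $+\|\varphi_{k,s}\|_{L_\infty}$ and $-\|\varphi_{k,s}\|_{L_\infty}$; since $d\Omega$ is a positive point mass at $-p_{k,s}$, a negative point mass at $1$ and a negative density on $(1,+\infty)$, it follows that $\Phi_{k,s}\,d\Omega=\|\Phi_{k,s}\|_{L_\infty}\,|d\Omega|$, which is~(\ref{Stiltjes}). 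The case $s=1$ is handled the same way through Corollary~\ref{Main_Corollary_Ls_prime}: one checks that the piecewise-linear $\varphi_{k,1}$ satisfies $\bigvee_{\mathbb{R}}\varphi_{k,1}'=1$ and~(\ref{Stiltjes}), and fulfils the relaxed H\"older identity against $d\varphi_{k,1}'$, the latter forcing $p_{k,1}$ (root of the second equation of~(\ref{1st_equation_v1})) to equalise the absolute values of the two sign-extrema of $\tau_{k,1}$.

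Once all hypotheses are verified, Corollary~\ref{Main_Corollary_Ls} (for $1<s\leqslant\infty$), respectively Corollary~\ref{Main_Corollary_Ls_prime} (for $s=1$), applied with $\lambda=k/(2-1/s)$, yields the asserted sharp inequality, with $\Phi_{k,s}$ (up to the admissible dilations $\Phi_h$) as extremal function. The real obstacle is the third step — making the positions of the extreme values of $\Phi_{k,s}$ match the sign pattern of $d\Omega$ — which is exactly where the choice of $p_{k,s}$ furnished by Lemma~\ref{Lem:Existence_R_k_0..1} enters essentially; the extra free parameter $p$, absent in the half-line cases (Corollaries~\ref{Cor:r=2} and~\ref{Cor:Kolmogorov_inequality_R+_k_1..2}), is precisely what is needed on $\mathbb{R}$ to make $\varphi_{k,s}$ bounded on both tails, and locating it via Lemma~\ref{Lem:Existence_R_k_0..1} carries the main content.
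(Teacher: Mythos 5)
Your proposal follows exactly the paper's route: the paper's own proof consists of setting $\Omega=\omega_{k,s}/\Gamma(2-k)$, asserting that relation~(\ref{relation}) holds, and checking that $\Phi_{k,s}$ (monotone, with $\Phi_{k,s}(-p_{k,s})=-\Phi_{k,s}(1)=\|\Phi_{k,s}\|_{L_\infty(\mathbb{R})}$) satisfies the hypotheses of Corollary~\ref{Main_Corollary_Ls} for $s>1$ and of Corollary~\ref{Main_Corollary_Ls_prime} for $s=1$ --- precisely the verification you carry out, only in more detail than the paper supplies. Your sign analysis of $\tau_{k,s}$, the H\"older computation via $\tau\,\tau_{(s')}=|\tau|^{s'}$, and the matching of the extrema of $\Phi_{k,s}$ with the sign pattern of $d\Omega$ all agree with the paper's assertions, so the proposal is correct and essentially identical in approach.
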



We start with the proof of the following auxiliary proposition.

\begin{lem}
\label{Lem:Existence_R_k_0..1}
Let $1< s\leqslant\infty$, $s' = s/(s-1)$ and $k\in(0,1)$. Then the following equations have a unique solution on the interval $\left[0,k/(1-k)\right]$:
\begin{equation}
\label{1st_equation_v1}
	Z_s(p):= \int_{-p}^{1} \tau_{(s')}(p;t)\,dt = 0 \;\;\textrm{and}\;\; Z_1(p):=k^{k}(1-k)^{1-k}(1+p) - (2k)^{1-k} = 0.
\end{equation}
\end{lem}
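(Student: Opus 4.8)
The plan is to mirror the proof of Lemma~\ref{Lem:Existence_R+_k_1..2}; since only the single parameter $p$ is now at play, the argument reduces to: (i) describing the sign pattern of $t\mapsto\tau(p;t)$ for each fixed $p\in[0,k/(1-k)]$; (ii) proving that $Z_s$ (for $s>1$) and $Z_1$ are continuous and strictly monotone on $[0,k/(1-k)]$; (iii) checking that they change sign between the endpoints $p=0$ and $p=k/(1-k)$; and (iv) concluding existence and uniqueness of the root from the intermediate value theorem together with strict monotonicity.

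First I would record the shape of $\tau(p;\cdot)$: it is continuous, supported on $[-p,1]$, vanishes at the two endpoints of the support, and a direct inspection of its piecewise formula shows that on $(-p,1)$ it changes sign exactly once, being negative on $(-p,x_0(p))$ and positive on $(x_0(p),1)$ for some $x_0(p)\in[0,1)$. The two extreme values of $p$ are degenerate: for $p=0$ the left interval collapses and $\tau(0;t)=t^{1-k}-t>0$ on $(0,1)$, so $x_0(0)=0$ and $\tau(0;\cdot)\geq 0$; for $p=k/(1-k)$ one has $1+p=1/(1-k)$, hence $\tau(k/(1-k);t)=t^{1-k}-(1-k)t-k$ on $(0,1)$, a function that is strictly increasing (its derivative is $(1-k)(t^{-k}-1)>0$) and vanishes only at $t=1$, so $\tau(k/(1-k);\cdot)\leq 0$ on its support and $x_0(k/(1-k))=1$. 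Consequently, for $s>1$,
\[
  Z_s(0)=\int_0^1\bigl(\tau(0;t)\bigr)^{s'-1}\,dt>0,\qquad Z_s\!\left(\tfrac{k}{1-k}\right)=-\int_{-k/(1-k)}^{1}\bigl|\tau\bigl(\tfrac{k}{1-k};t\bigr)\bigr|^{s'-1}\,dt<0 .
\]

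For $s>1$ the remaining point is the strict monotonicity of $Z_s$ in $p$ (continuity is immediate from dominated convergence, since $\tau(p;t)$ and $x_0(p)$ vary continuously with $p$ and $|\tau|$ is bounded on the support). Writing
\[
  Z_s(p)=-\int_{-p}^{x_0(p)}|\tau(p;t)|^{s'-1}\,dt+\int_{x_0(p)}^{1}|\tau(p;t)|^{s'-1}\,dt ,
\]
I would verify that $\partial\tau(p;t)/\partial p<0$ on the interior of $[-p,1]$ (the $p$-dependent part of $\tau$ has the form $(\text{positive})\cdot(1+p)^{-1}$, which is decreasing in $p$). Hence on the negative piece $|\tau|$ grows with $p$ while its domain enlarges ($-p$ moves left, and since $\tau$ crosses its zero increasingly, $x_0(p)$ moves right), and on the positive piece $|\tau|$ decreases with $p$ while its domain shrinks; the Leibniz boundary terms at $t=-p$ and $t=x_0(p)$ vanish because $\tau$, and therefore $|\tau|^{s'-1}$ (as $s'-1>0$), is zero there. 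Thus $Z_s$ is strictly decreasing, so by (iii) it has a unique zero $p_{k,s}\in(0,k/(1-k))$. For $s=1$ the function $Z_1$ is given explicitly: it is monotone on $[0,k/(1-k)]$ (affine in $p$ with leading coefficient $k^k(1-k)^{1-k}>0$), and it takes a positive value at one endpoint and a negative value at the other, whence the unique root $p_{k,1}$ in the interval.

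The step I expect to be the main obstacle is the strict monotonicity of $Z_s$ in $p$ when $s>1$: $\tau$ depends on $p$ simultaneously through its pointwise values and through the moving endpoints $-p$ and $x_0(p)$ of the two intervals of constant sign, so the differentiation in $p$ must be carried out with care. The decisive simplification is that $\tau$ vanishes at both moving endpoints, which kills all boundary contributions and leaves only the sign-definite interior variation — the same type of bookkeeping that governs the interplay of $F_1$ and $F_2$ in the proof of Lemma~\ref{Lem:Existence_R+_k_1..2}.
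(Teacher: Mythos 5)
For $s>1$ your proof is the paper's proof with the details filled in: the paper likewise disposes of $Z_s$ in one sentence by asserting that it is continuous, strictly decreasing on $\left[0,k/(1-k)\right]$, and of opposite signs at the two endpoints, and your justifications of these assertions are sound. (You work, as you must, with the sign-corrected $\omega$, i.e.\ $\omega(p;\cdot)=+(1+p)^{-1}$ on $(-p,1]$, which is what makes $\tau(p;\cdot)$ vanish off $[-p,1]$ and gives $\tau(0;t)=t^{1-k}-t$ and $\tau(k/(1-k);t)=t^{1-k}-(1-k)t-k$; with the printed sign $\tau$ would be nonnegative everywhere and $Z_s$ would never vanish.) Your monotonicity argument is more elaborate than necessary — it suffices to note that $u\mapsto|u|^{s'-1}\,\mathrm{sign}\,u$ is increasing, that $\partial\tau/\partial p=-(1-t)(1+p)^{-2}<0$ on the interior of the support, and that enlarging $p$ extends the domain of integration only into the region where $\tau<0$ — but it reaches the right conclusion. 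One slip worth flagging: your claim that the boundary terms vanish ``as $s'-1>0$'' fails at $s=\infty$, which the lemma includes; there $s'=1$, $|\tau|^{s'-1}\equiv 1$ on the support, and the boundary contributions do not vanish. For $s=\infty$ one should instead write $Z_\infty(p)=(1-x_0(p))-(x_0(p)+p)$ and use that $x_0(p)$ increases.

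The genuine gap is in the $Z_1$ case. You assert that $Z_1$ ``takes a positive value at one endpoint and a negative value at the other''; for the formula as printed this is false. Indeed $Z_1(0)=k^k(1-k)^{1-k}-(2k)^{1-k}$ is positive for small $k$ (for $k=1/10$ it is approximately $0.722-0.235>0$), and since $Z_1$ is affine and increasing in $p$ it is then positive on all of $\left[0,k/(1-k)\right]$ and has no root there; for $k=2/5$ both endpoint values are negative and again there is no root, the unique real zero of the affine function lying outside the interval. So this step fails, and with the printed $Z_1$ the assertion of the lemma cannot be recovered. The paper offers no argument here (the case is declared ``trivial''), and the formula is presumably misprinted: the condition actually needed for the $s=1$ extremal configuration is that the negative extremum $|\tau(p;0)|=p/(1+p)$ equal the positive maximum of $\tau(p;\cdot)$, attained at $x_*=((1-k)(1+p))^{1/k}$, which simplifies to $k^k(1-k)^{1-k}(1+p)=(2p)^k$, and that equation does change sign between $p=0$ and $p=k/(1-k)$. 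In any case, the endpoint-sign check you rely on is concretely verifiable and fails, so it should not have been asserted without computation.
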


We remark that for particular values of $s$ we can find $p_{k,s}$ explicitly, e.g. $p_{k,\infty} = 1-2^{-k/(1-k)}$ and $p_{k,2} = k/(2-k)$.

\begin{proof} The fact that equation $Z_1(p) = 0$ has a unique solution on the interval $\left[0,k/(1-k)\right]$ is trivial. To prove that equation $Z_s(p) = 0$ also has a unique solution on the same interval we observe that $Z_s$ is continuous and strictly decreases on the interval $\left[0,k/(k-1)\right]$, and attains values of opposite signs at points $0$ and $k/(k-1)$. Thus equation $Z_s(p) = 0$ has a unique solution on $\left[0,k/(1-k)\right]$.
\end{proof}

\begin{proof}[Proof of Corollary~\ref{Cor:Kolmogorov_inequality_R_k_0..1}]
We set $\Omega := \frac{\omega_{k,s}}{\Gamma(2-k)}$.
Similarly to Subsection~\ref{SubSec:R+k0..1r=2} we can check that for every $f\in L_{\infty,s}^2\left(\mathbb{R}\right)$, equality~(\ref{relation}) holds true. Moreover, if $s>1$, we see that $\Phi_{k,s}$ decreases on $\mathbb{R}$, $\Phi_{k,s}\in W^2_{\infty,s}\left(\mathbb{R}\right)$, $\Phi_{k,s}\left(-p_{k,s}\right) = - \Phi_{k,s}(1) = \left\|\Phi_{k,s}\right\|_{L_{\infty}\left(\mathbb{R}\right)}$. Hence, the functions $\Omega$ and $\Phi_{k,s}$ satisfy conditions of Corollary~\ref{Main_Corollary_Ls}. In turn, for $s=1$ we can check that the functions $\Omega$ and $\Phi_{k,1}$ satisfy conditions of Corollary~\ref{Main_Corollary_Ls_prime}. \end{proof}

\subsection{Case $G=\mathbb{R}$, $1<s\leqslant \infty$ and $k\in(1,2-1/s)$}
\label{SubSec:Rk1..2r=2}

Let $1<s\leqslant\infty$, $s'=s/(s-1)$ and $k\in\left(1,2-1/s\right)$. Consider the set $S := M\times \left[0, +\infty\right)$, where the set $M$ was defined in Subsection~\ref{SubSec:R+k1..2r=2}, and for every $(a,b,p)\in S$, we define
\[
	\omega(a,b,p;x) = \left\{\begin{array}{ll}
		0, & x\leqslant -p,\\
		\displaystyle \frac{1-b+\left(1-b^{1-k}\right)(a-1)}{(1-b)(a+p)}, & x\in[-p,a], \\
		\displaystyle (1-b)^{-1}\cdot\left(b^{1-k}-1\right), & x\in[a,1], \\
		(1-k)x^{-k}, & x\geqslant 1.
	\end{array}\right.
\]
For $x\in\mathbb{R}$, we consider functions $\tau(a,b,p;x) :=\Gamma(2-k)\cdot\mathcal{R}_{2-k}(x) - \omega^{[1]}(a,b,p;x)$ and
\[
	\displaystyle \varphi(a,b,p;x) := \displaystyle \frac 12\int_{-p}^{a}t\cdot \tau_{(s')}(a,b,p;t)\,dt + \int_{-p}^{x} (x-t)\cdot\tau_{(s')}(a,b,p;t)\,dt.
\]
Below in Lemma~\ref{Lem:Existence_R_k_1..2} we shall prove that the system of equations~(\ref{3rd_equation}) has at least one solution on $S$. Let $\left(a_{k,s},b_{k,s},p_{k,s}\right)$ be one of such solutions  and, for simplicity of notations, we denote the functions $\omega\left(a_{k,s},b_{k,s},p_{k,s};\cdot\right)$, $\tau\left(a_{k,s},b_{k,s},p_{k,s};\cdot\right)$ and $\varphi\left(a_{k,s},b_{k,s},p_{k,s};\cdot\right)$ by $\omega_{k,s}$, $\tau_{k,s}$ and $\varphi_{k,s}$ respectively. The graphs of functions $\omega^{[1]}(a,b,p;\cdot)$, $\tau(a,b,p;\cdot)$ and $\varphi_{k,s}$ are shown in Figure~\ref{fig:1.7} on page~\pageref{fig:1.7}.

Next, we set $\Phi_{k,s} := \left\|\varphi''_{k,s}\right\|_{L_s\left(\mathbb{R}\right)}^{-1}\cdot \varphi_{k,s}$. There holds true the following consequence of Corollary~\ref{Main_Corollary_Ls}.

\begin{cor}
\label{Cor:Kolmogorov_inequality_R_k_1..2}
Let $1<s\leqslant\infty$, $s' = s/(s-1)$, $k\in\left(1,2-1/s\right)$, and $h>0$. Then for every function $f\in L_{\infty,s}^2(\mathbb{R})$, there hold true sharp inequalities
\[
  \left\|D^{k}_{-}f\right\|_{L_\infty\left(\mathbb{R}\right)} \leqslant \frac{\left\|D^{k}_{-}\Phi_{k,s}\right\|_{L_\infty\left(\mathbb{R}\right)}}{\left\|\Phi_{k,s}\right\|_{L_\infty\left(\mathbb{R}\right)}^{1-\lambda}}\,\left\|f\right\|^{1-\lambda}_{L_\infty\left(\mathbb{R}\right)} \left\|f''\right\|^{\lambda}_{L_s\left(\mathbb{R}\right)},\qquad \lambda = \frac{k}{2-1/s}.
\]
\end{cor}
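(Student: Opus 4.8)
To prove Corollary~\ref{Cor:Kolmogorov_inequality_R_k_1..2}, the plan is to recognise it as a particular case of Corollary~\ref{Main_Corollary_Ls}, applied with $r=2$, $E=L_s(\mathbb{R})$, $\Omega:=\omega_{k,s}/\Gamma(2-k)$ and $\Phi:=\Phi_{k,s}$. Thus it suffices to verify, for these explicit objects, the four hypotheses of that corollary: (a)~$\Omega\in V(\mathbb{R})$; (b)~$\mathcal{R}_{2-k}-\Omega^{[1]}\in L_{s'}(\mathbb{R})$; (c)~the integral identity~(\ref{relation}) holds for every $f\in L^{2}_{\infty,s}(\mathbb{R})$; and (d)~$\Phi_{k,s}\in W^{2}_{\infty,s}(\mathbb{R})$ and $\Phi_{k,s}$ satisfies the Stieltjes equality~(\ref{Stiltjes}) together with the H\"older equality of that corollary. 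Once (a)--(d) are checked, Corollary~\ref{Main_Corollary_Ls} produces the additive inequality~(\ref{Kolmogorov_inequality_Ls_additive}) and, after minimisation in $h$, the multiplicative inequality of the statement, with the constant $\|D^{k}_{-}\Phi_{k,s}\|_{L_{\infty}(\mathbb{R})}\,\|\Phi_{k,s}\|_{L_{\infty}(\mathbb{R})}^{\lambda-1}$ and $\lambda=k/(2-1/s)$, the sharpness being realised by the rescalings $h^{2-1/s}\Phi_{k,s}\big((\cdot)/h\big)$, $h>0$.

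Items (a)--(c) are routine. For (a), $\omega_{k,s}$ is piecewise constant on $(-\infty,-p_{k,s}]$, $[-p_{k,s},a_{k,s}]$, $[a_{k,s},1]$ and equals the $C^{1}$ function $(1-k)x^{-k}$ on $[1,+\infty)$, so its total variation is finite. For (b), the explicit constants in $\omega_{k,s}$ are precisely those that make $\tau_{k,s}=\Gamma(2-k)\bigl(\mathcal{R}_{2-k}-\Omega^{[1]}\bigr)$ supported on $[-p_{k,s},1]$ and comparable to $t^{1-k}$ near the relevant points; since $k<2-1/s$ is equivalent to $(1-k)s'>-1$, this gives $\tau_{k,s}\in L_{s'}(\mathbb{R})$, i.e.\ $\mathcal{R}_{2-k}-\Omega^{[1]}\in L_{s'}(\mathbb{R})$, exactly as in the computation underlying Proposition~\ref{Prop:continuity}. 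For (c), one substitutes the representation of $D^{k}_{-}f(0)$ from Proposition~\ref{Prop:continuity}, writes $\int_{\mathbb{R}}f\,d\Omega$ as the sum of the jump contributions of $\Omega$ together with its absolutely continuous part, and integrates by parts twice to turn each $f$-term into an $f''$-term; this is the same computation as in Subsections~\ref{SubSec:R+k0..1r=2} and~\ref{SubSec:Rk0..1r=2}, the parameters $a_{k,s},b_{k,s},p_{k,s}$ entering only through $\Omega^{[1]}$.

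For (d), start from the construction $\varphi_{k,s}''=\tau_{(s')}(a_{k,s},b_{k,s},p_{k,s};\cdot)=|\tau_{k,s}|^{s'-1}\,\textrm{sign}\,\tau_{k,s}$. Then $\|\varphi_{k,s}''\|_{L_s(\mathbb{R})}=\|\tau_{k,s}\|_{L_{s'}(\mathbb{R})}^{s'-1}<\infty$, so the normalised function $\Phi_{k,s}=\|\varphi_{k,s}''\|_{L_s(\mathbb{R})}^{-1}\varphi_{k,s}$ has $\|\Phi_{k,s}''\|_{L_s(\mathbb{R})}=1$; and since $\varphi_{k,s}'(x)=\int_{-p_{k,s}}^{x}\tau_{(s')}$, the vanishing of $\int_{-p_{k,s}}^{1}\tau_{(s')}$ --- one of the equations of system~(\ref{3rd_equation}) --- makes $\varphi_{k,s}$ constant on $(-\infty,-p_{k,s}]$ and on $[1,+\infty)$, so $\Phi_{k,s}\in L_{\infty}(\mathbb{R})$ and hence $\Phi_{k,s}\in W^{2}_{\infty,s}(\mathbb{R})$. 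The H\"older equality then holds automatically: since $r=2$ and $\Phi_{k,s}''$ is a positive multiple of $|\tau_{k,s}|^{s'-1}\,\textrm{sign}\,\tau_{k,s}$,
\[
  \int_{\mathbb{R}}\bigl(\mathcal{R}_{2-k}(x)-\Omega^{[1]}(x)\bigr)\,\Phi_{k,s}''(x)\,dx
  =\frac{1}{\Gamma(2-k)}\cdot\frac{\int_{\mathbb{R}}|\tau_{k,s}|^{s'}}{\|\tau_{k,s}\|_{L_{s'}(\mathbb{R})}^{s'-1}}
  =\frac{\|\tau_{k,s}\|_{L_{s'}(\mathbb{R})}}{\Gamma(2-k)}
  =\bigl\|\mathcal{R}_{2-k}-\Omega^{[1]}\bigr\|_{L_{s'}(\mathbb{R})}.
\]

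The remaining ingredient, and the step I expect to be the main obstacle, is the Stieltjes equality~(\ref{Stiltjes}), $\int_{\mathbb{R}}\Phi_{k,s}\,d\Omega=\bigvee_{\mathbb{R}}\Omega\cdot\|\Phi_{k,s}\|_{L_{\infty}(\mathbb{R})}$, which demands a tight matching between the monotonicity of $\Phi_{k,s}$ and the sign of the increments of $\Omega$. Concretely, one has to: (i)~establish the sign pattern of $\tau_{k,s}$ --- namely a single interior sign change, at $b_{k,s}$, inside $[-p_{k,s},1]$ --- so that, via $\varphi_{k,s}'(x)=\int_{-p_{k,s}}^{x}\tau_{(s')}$ together with the equations of system~(\ref{3rd_equation}), $\Phi_{k,s}$ is monotone on $\mathbb{R}$ with plateaus on $(-\infty,-p_{k,s}]$ and $[1,+\infty)$; (ii)~use the remaining equations of the system to identify the two plateau values of $\Phi_{k,s}$ as $\pm\|\Phi_{k,s}\|_{L_{\infty}(\mathbb{R})}$, and to check that at each jump of $\Omega$ (located at $-p_{k,s}$, $a_{k,s}$, $1$) and along the tail $[1,+\infty)$ the sign of the corresponding increment of $\Omega$ agrees with the sign of $\Phi_{k,s}$ there; (iii)~add up, obtaining $\int_{\mathbb{R}}\Phi_{k,s}\,d\Omega=\|\Phi_{k,s}\|_{L_{\infty}(\mathbb{R})}\bigl(\sum|\Delta\Omega|+\int|\Omega'|\bigr)=\|\Phi_{k,s}\|_{L_{\infty}(\mathbb{R})}\bigvee_{\mathbb{R}}\Omega$. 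The delicate parts of (i)--(ii) --- locating $b_{k,s}$ and pinning down the signs of the jumps of $\Omega$, i.e.\ controlling the position of the piecewise profile $\omega^{[1]}(a,b,p;\cdot)$ relative to the curve $\Gamma(2-k)\mathcal{R}_{2-k}(\cdot)$ --- rest on the existence of a triple $(a_{k,s},b_{k,s},p_{k,s})\in S$ solving system~(\ref{3rd_equation}), which is Lemma~\ref{Lem:Existence_R_k_1..2}.
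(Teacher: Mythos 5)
Your overall strategy is exactly the paper's: reduce to Corollary~\ref{Main_Corollary_Ls} with $\Omega=\omega_{k,s}/\Gamma(2-k)$ and $\Phi=\Phi_{k,s}$, verify relation~(\ref{relation}), the membership $\Phi_{k,s}\in W^2_{\infty,s}(\mathbb{R})$, and the two extremality identities, with Lemma~\ref{Lem:Existence_R_k_1..2} supplying the parameters $\left(a_{k,s},b_{k,s},p_{k,s}\right)$; the paper's own proof is precisely this reduction, stated even more tersely. Your treatment of the H\"older equality via $\varphi_{k,s}''=\tau_{(s')}$, and of the integrability condition via $(1-k)s'>-1$, is correct.

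However, the profile you assign to $\tau_{k,s}$ and $\Phi_{k,s}$ in steps (i)--(ii) is wrong, and since the verification of~(\ref{Stiltjes}) hinges on exactly this profile, the argument as written would not close. On $\left(-p_{k,s},0\right)$ one has $\mathcal{R}_{2-k}=0$ while $\omega^{[1]}>0$, so $\tau_{k,s}<0$ there; as $x\to0^+$, $\tau_{k,s}(x)\sim x^{1-k}\to+\infty$ because $1-k<0$. Hence $\tau_{k,s}$ changes sign twice inside $\left(-p_{k,s},1\right)$ (at $0$ and at $b_{k,s}$), not once. Correspondingly $\varphi_{k,s}'(x)=\int_{-p_{k,s}}^{x}\tau_{(s')}\,dt$ is nonpositive on $\left[-p_{k,s},a_{k,s}\right]$ (returning to zero at $a_{k,s}$ precisely by the equation $Z_1=0$) and nonnegative on $\left[a_{k,s},1\right]$ (returning to zero at $1$ by $Z_1+Z_2=0$), so $\Phi_{k,s}$ is \emph{not} monotone on $\mathbb{R}$: it is non-increasing up to $a_{k,s}$ and non-decreasing afterwards, with \emph{both} plateaus equal to $+\left\|\Phi_{k,s}\right\|_{L_\infty(\mathbb{R})}$ and an interior minimum $-\left\|\Phi_{k,s}\right\|_{L_\infty(\mathbb{R})}$ attained at $a_{k,s}$; the identities $\varphi_{k,s}\left(a_{k,s}\right)=-\varphi_{k,s}\left(-p_{k,s}\right)$ and $\varphi_{k,s}(1)=\varphi_{k,s}\left(-p_{k,s}\right)$ follow from $Z_1=0$ and $Z_3=0$. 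The matching required in~(\ref{Stiltjes}) then reads: positive jump of $\Omega$ at $-p_{k,s}$ against $\Phi_{k,s}=+M$, negative jump at $a_{k,s}$ against $\Phi_{k,s}=-M$, and positive jump at $1$ together with positive density $\Omega'$ on $(1,+\infty)$ against $\Phi_{k,s}=+M$. With this corrected profile your plan goes through and coincides with the paper's proof.
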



We start with the proof of the following auxiliary

\begin{lem}
\label{Lem:Existence_R_k_1..2}
The following system of equations has a solution on $S$:
\begin{equation}
\label{3rd_equation}
	\left\{ \begin{array}{l}
		\displaystyle Z_1(a,b,p) := \int_{-p}^{a}\tau_{(s')}(a,b,p;t)\,dt = 0, \\[10pt]
		\displaystyle Z_2(a,b,p) := \int_{a}^{1} \tau_{(s')}(a,b,p;t)\,dt = 0, \\[10pt]
		\displaystyle Z_3(a,b,p) := \int_{-p}^{1} t\cdot \tau_{(s')}(a,b,p;t)\,dt = 0,
	\end{array}\right.
\end{equation}
\end{lem}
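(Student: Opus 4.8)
The plan is to follow the elimination scheme used in the proofs of Lemmas~\ref{Lem:Existence_R+_k_1..2} and~\ref{Lem:Existence_R_k_0..1}, now with three parameters instead of one or two. First I would pin down the shape of $\tau(a,b,p;\cdot)$. Since $\omega(a,b,p;\cdot)$ is piecewise constant on $[-p,1]$ and equals $(1-k)(\cdot)^{-k}$ on $[1,+\infty)$, its primitive $\omega^{[1]}(a,b,p;\cdot)$ is continuous and completely explicit; a direct computation of its values at the break points $-p,a,1$ shows that $\omega^{[1]}(a,b,p;\cdot)$ coincides with $\Gamma(2-k)\mathcal{R}_{2-k}$ on $[1,+\infty)$ — the constants entering $\omega$ are chosen precisely for this — so that $\tau(a,b,p;\cdot)$ is supported on $[-p,1]$, and on that interval it is negative on $(-p,0)$, positive on $(0,b)$ and negative on $(b,1)$. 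Because the pointwise map $g\mapsto g_{(s')}=|g|^{s'-1}\,\textrm{sign}\,g$ preserves sign and support, $\tau_{(s')}(a,b,p;\cdot)$ has the same structure. Consequently $Z_1,Z_2,Z_3$ are continuous on $S$ and extend continuously to the closure, i.e.\ to the degenerate regimes $a\to 0^{+}$, $a\to b^{-}$, $b\to 1^{-}$, $p\to 0^{+}$ and $p\to+\infty$. It is also useful to record the meaning of the equations: writing $\psi(x):=\int_{-p}^{x}\tau_{(s')}(a,b,p;t)\,dt$, one has $\psi(-p)=0$ always, $Z_1=0$ is $\psi(a)=0$, $Z_1+Z_2=0$ is $\psi(1)=0$, and, given the last two, $Z_3=0$ is equivalent to $\varphi(a,b,p;-p)=\varphi(a,b,p;1)$; thus a solution of~(\ref{3rd_equation}) is exactly a choice of parameters for which the candidate extremal function $\varphi(a,b,p;\cdot)$ has horizontal tangents at $x=a$ and $x=1$ and equal values at the outer ends $-p$ and $1$ of its two flat pieces.

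The elimination itself would proceed as follows. Freeze $p$ and solve the pair $\{Z_2=0,\ Z_3=0\}$ for $(a,b)$: this sub-system is of the same type as~(\ref{2nd_equation}), and the argument of Lemma~\ref{Lem:Existence_R+_k_1..2} — monotonicity of $Z_3(\cdot,\cdot,p)$ in each of $a,b$, monotonicity of $Z_2(\cdot,\cdot,p)$ in $b$ and, in the opposite direction, in $a$, together with sign changes on the boundary of $M$ — produces, for every $p$ in a suitable compact interval $[0,p^{*}]$, a solution $(a(p),b(p))\in M$, which by the same monotonicity depends continuously on $p$. It then remains to choose $p$ with $G(p):=Z_1(a(p),b(p),p)=0$. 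One checks that $G$ is continuous on $[0,p^{*}]$ and takes opposite signs at its endpoints: at $p=0$ the left flat piece collapses and $G(0)$ is of one sign, while at $p=p^{*}$ — the largest $p$ for which the frozen sub-system is still solvable inside $M$ — the sign reverses; the intermediate value theorem then yields $p_{k,s}\in(0,p^{*})$ with $G(p_{k,s})=0$, so that $(a(p_{k,s}),b(p_{k,s}),p_{k,s})$ solves~(\ref{3rd_equation}). Should the monotonicity needed to keep the $(a,b)$-branch single-valued fail, the same existence conclusion can instead be obtained from a topological principle — Brouwer's theorem, or the Poincar\'e--Miranda theorem after a coordinate change turning the relevant part of $\overline{S}$ into a box — once the boundary sign pattern of $(Z_1,Z_2,Z_3)$ found above is in hand.

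The main obstacle is the boundary analysis. Because all three parameters enter $\tau(a,b,p;\cdot)$ simultaneously through $\omega^{[1]}$, the requisite partial monotonicities of $Z_1,Z_2,Z_3$ are not visible at a glance and have to be extracted by differentiating the integrals and using the sign pattern of $\tau$ from the first step; one must also verify that the frozen-$p$ solution $(a(p),b(p))$ does not hit a degenerate corner $a=b$ or $b=1$ of $M$ before $p$ reaches $p^{*}$, and compute the (finite or infinite) limiting values of the functionals in every degenerate regime — these asymptotics of $\int\tau_{(s')}$, analogous to the limits $\lim_{b\to0^{+}}F_2(b,b)=-\infty$, etc., in the proof of Lemma~\ref{Lem:Existence_R+_k_1..2} but now more numerous and coupled, are the technical heart of the argument. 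In particular, pinning down the two endpoint signs of $G$ is what actually delivers existence, and that is the crux.
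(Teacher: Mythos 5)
Your overall shape --- eliminate two of the three unknowns and finish with a one--dimensional intermediate value argument --- is the same as the paper's, but your elimination order is different and it hides the two structural facts that make the paper's argument go through cleanly. The paper first solves $Z_2=0$: since $\omega^{[1]}(a,b,p;\cdot)$ restricted to $[a,1]$ does not depend on $p$ (the value of $\omega$ on $[-p,a]$ carries the factor $(a+p)^{-1}$, so $\int_{-p}^{a}\omega\,dt$ is $p$--free), $Z_2$ is \emph{constant in $p$} and strictly monotone in $a$ and in $b$ separately; this yields $b=\gamma(a)$ from $Z_2=0$ alone. Next, $Z_1$ is strictly decreasing in $p$, positive at $p=0$ on the curve $b=\gamma(a)$, and tends to $-\infty$, which gives $p=\delta(a)$. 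Only then is the intermediate value theorem applied, to the single function $a\mapsto Z_3\left(a,\gamma(a),\delta(a)\right)$, whose signs as $a\to0^{+}$ and $a\to1^{-}$ are computed explicitly. Your order instead couples $Z_3$ into the inner two--dimensional solve and leaves the $p$--monotone equation $Z_1$ for last, which is exactly backwards relative to where the monotonicity actually lives.

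This is where the genuine gap is. Your frozen--$p$ subsystem $\left\{Z_2=0,\,Z_3=0\right\}$ is \emph{not} ``of the same type'' as~(\ref{2nd_equation}): there $F_2=\int_0^1 t\,\tau_{(s')}\,dt$ with $\tau$ positive then negative on $(0,1)$, whereas here $Z_3=\int_{-p}^{1}t\,\tau_{(s')}\,dt$ picks up an extra contribution on $(-p,0)$ where $t<0$ and $\tau<0$, so $t\,\tau_{(s')}>0$; the monotonicity of $Z_3$ in $a$ and $b$ at fixed $p$, the boundary sign pattern on $M$, the solvability of the subsystem for every $p$ in a genuine interval $[0,p^{*}]$, the continuity of the branch $(a(p),b(p))$, the very definition of $p^{*}$, and above all the opposite signs of $G(p)=Z_1(a(p),b(p),p)$ at the two endpoints are all asserted rather than established --- and you yourself flag the endpoint signs of $G$ as ``the crux.'' A proof that defers its crux is a plan, not a proof. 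If you reorder the elimination as the paper does ($Z_2$, then $Z_1$, then $Z_3$), the only nontrivial computation left is the pair of limits $\lim_{a\to0^{+}}Z_3\left(a,\gamma(a),\delta(a)\right)<0$ and $\lim_{a\to1^{-}}Z_3\left(a,\gamma(a),\delta(a)\right)>0$, which is a tractable target; in your ordering the analogous endpoint analysis for $G$ has no comparably explicit handle.
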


For particular values of $s$ the triple  $\left(a_{k,s},b_{k,s},p_{k,s}\right)$ can be found explicitly, e.g. $ p_{k,\infty} = a_{k,\infty} = 1/3$ and $b_{k,\infty} = 2/3$.

\begin{proof} First, we observe that function $Z_2$ is continuous on $S$, strictly decreases in variable $a$, strictly increases in variable $b$ and is constant in variable $p$. In addition, for every $a\in(0,1)$, $Z_2\left(a,a,0\right) <0$ and $\lim\limits_{b\to 1-0} Z_2\left(a,b,0\right) > 0$. The latter and monotonicity of $Z_2$ in variable $b$ imply that there exists a strictly increasing function $\gamma : (0,1)\to\mathbb{R}$ such that $Z_2\left(a,\gamma(a),0\right) = 0$. Moreover, continuity of function $\gamma$ follows from its monotonicity and continuity of function $Z_2$.


Next, we consider the function $Z_1$. Evidently, $Z_1$ is continuous on $S$, strictly increases in variables $a$ and $b$, and strictly decreases in variable $p$. Since for every $a\in(0,1)$, $Z_1(a,\gamma(a),0) > 0$ and $\lim\limits_{p\to -\infty} Z_1\left(a,\gamma(a),p\right) = -\infty$, we conclude that there exists a function $\delta:(0,1)\to\mathbb{R}$ such that for every $a\in(0,1)$, $Z_1\left(a,\gamma(a),\delta(a)\right) = 0$. Since $Z_1$ is continuous on $S$ and $\delta$ is monotone, we conclude that $\delta$ is also continuous on $(0,1)$. Therefore, for every $a\in(0,1)$, we have $Z_1\left(a,\gamma(a),\delta(a)\right) = Z_2\left(a,\gamma(a),\delta(a)\right) = 0$. Now, we set $b^* = \lim\limits_{a\to +0} \gamma(a)$. Since,
\[
	\begin{array}{rcl}
		\displaystyle\lim\limits_{a\to 0^+} Z_2\left(a,\frac 12,0\right) & = & \displaystyle \int_0^{1/2} \left(t^{1-k} - 1 + 2\left(1-2^{k-1}\right)(1-t)\right)^{s'-1}\,dt \\[10pt]
		& & \qquad \displaystyle - \int_{1/2}^{1} \left(t^{1-k} - 1 + 2\left(1-2^{k-1}\right)(1-t)\right)^{s'-1}\,dt >0,
	\end{array}
\]
and
\[
	\lim\limits_{a\to 0^+} Z_2\left(a,\frac 12,0\right) = \int_0^{1/2} \left(t^{1-k} - t\right)^{s'-1}\,dt - \int_{1/2}^{1} \left(t^{1-k} - t\right)^{s'-1}\,dt >0,
\]
we conclude that $b^* \in(0,1)$.

Finally, note that $Z_3$ is also continuous on $S$ function, and
\[
	\begin{array}{rcl}
		\displaystyle\lim\limits_{a\to 0^+} Z_3\left(a,\gamma(a),\delta(a)\right) & = & \displaystyle Z_3\left(+0, b^*, 0\right) = \int_0^{b^*} t\cdot \tau^{s'-1}_{k}\left(+0,b^*,0\right)\,dt \\ [10pt]
		& & \displaystyle\qquad - \int_{b^*}^1 t\cdot \tau^{s'-1}_{k}\left(+0,b^*,0\right)\,du \\[10pt]
		& < & \displaystyle b^*\left[\int_0^{b^*}\tau_{k}^{s'-1}(a,b,p;t)\,dt - \int_{b^*}^1 \tau^{s'-1}_{k}(a,b,p;t)\,dt \right]\\[10pt]
		& = & \displaystyle b^* Z_2\left(+0,b^*,0\right) = 0,
	\end{array}
\]
\[
	\begin{array}{rcl}
		\displaystyle \lim\limits_{a\to 1^-} Z_3\left(a,\gamma(a),\delta(a)\right) & > & \displaystyle \int_0^1 t \left(t^{1-k} - 1\right)^{s'-1}\,dt > 0.
	\end{array}
\]
Hence, there exists $a\in(0,1)$ such that $Z_3\left(a,\gamma(a),\delta(a)\right) = 0$.
\end{proof}

\begin{proof}[Proof of Corollary~\ref{Cor:Kolmogorov_inequality_R_k_1..2}]
We set $\Omega := \frac{\omega_{k,s}}{\Gamma(2-k)}$. It is easy to check that relation~(\ref{relation}) holds true for every $f\in L^2_{\infty,s}\left(\mathbb{R}_+\right)$.
Moreover, $\Phi_{k,s}\in W^2_{\infty,s}\left(\mathbb{R}\right)$, $\Phi_{k,s}\left(-p_{k,s}\right) = -\Phi_{k,s}\left(a_{k,s}\right) = \left\|\Phi_{k,s}\right\|_{L_\infty\left(\mathbb{R}\right)}$,  $\Phi_{k,s}(x) = \left\|\Phi_{k,s}\right\|_{L_\infty\left(\mathbb{R}\right)}$ for every $x\geqslant 1$, $\Phi_{k,s}$ is non-increasing on $\left(-\infty,a_{k,s}\right]$ and non-decreasing on $\left[a_{k,s},+\infty\right)$. Hence, the functions $\Omega$ and $\Phi_{k,s}$ satisfy conditions of Corollary~\ref{Main_Corollary_Ls} which finishes the proof.
\end{proof}

\section{Applications}
\label{Sec:Applications}

In this section we consider applications of results of this paper. We devote Subsection~\ref{SubSec:Kolm} to consequences related to the Kolmogorov problem for three numbers and in Subsection~\ref{SubSec:Hadamard} we obtain sharp Kolmogorov type inequalities for the weighted norms of fractional powers of operator $\mathscr{D} = x\frac{d}{dx}$ delivered by the Hadamard fractional derivative.

\subsection{The Kolmogorov problem for three numbers}
\label{SubSec:Kolm}

Let $G = \mathbb{R}$ or $G = \mathbb{R}_+$, $1\leqslant p,q,s\leqslant\infty$, $r\in\mathbb{N}$ and $k\in\left(0,r\right)\setminus \mathbb{N}$. The Kolmogorov problem for three numbers (see~\cite{Kolm_85}) consists in finding necessary and sufficient conditions on three positive numbers $M_{0}$, $M_{k}$, $M_{r}$, that guarantee existence of a function $f\in L_{p,s}^{r}(G)$ satisfying equalities
\begin{equation}
\label{Kolmogorov_condition}
	\left\|f\right\|_{L_{p}(G)} = M_{0},\qquad \left\|D^{k}_{-}f\right\|_{L_{q}(G)} = M_{k},\qquad \left\|f^{(r)}\right\|_{L_{s}(G)} = M_{r}.
\end{equation}
For the overview of known results in this direction, we refer the reader to~\cite{Kolm_85,BKKP-kn} and references therein. Using similar arguments and combining them with results of the previous section we deduce the following consequences.

\begin{thm}
\label{Thm:Kolmogorov_problem_s>1}
Let $G=\mathbb{R}$ or $G = \mathbb{R}_+$, $1< s\leqslant\infty$, $p=q=\infty$, $k\in\left(0,2-1/s\right)$, $r=2$, and $M_0$, $M_k$, $M_2$ be positive numbers. Assume that $K$ is a sharp constant in inequality~(\ref{multiplicative_inequality_fractional}) and there exists a non-negative function turning~(\ref{multiplicative_inequality_fractional}) into equality. Then there exists a function $f\in L^{2}_{\infty,s}(G)$ satisfying equalities~(\ref{Kolmogorov_condition}) if and only if $M_k \leqslant K \cdot M_0^{1-\lambda} M_2^{\lambda}$ where $\lambda = \frac{k}{2-1/s}$.
\end{thm}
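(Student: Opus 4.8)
The plan is to prove the two implications separately; the "only if" part is immediate from the sharp Kolmogorov type inequality, and the "if" part is the substantive one and requires a continuity-and-scaling argument producing a function with the three prescribed norms.

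For the necessity direction, suppose $f \in L^2_{\infty,s}(G)$ satisfies the three equalities in~(\ref{Kolmogorov_condition}) with $p = q = \infty$ and $r = 2$. Then inequality~(\ref{multiplicative_inequality_fractional}) with $q = \infty$ (which holds with finite sharp constant $K$ under the hypothesis $k \in (0, 2 - 1/s)$, by the exponents in~(\ref{Gabushin_conditions1}) and Proposition~\ref{Prop:continuity}) gives $M_k = \|D^k_- f\|_{L_\infty(G)} \leqslant K\|f\|_{L_\infty(G)}^{1-\lambda}\|f''\|_{L_s(G)}^{\lambda} = K M_0^{1-\lambda} M_2^{\lambda}$, which is exactly the claimed condition.

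For the sufficiency direction, I would argue as follows. Let $\Phi$ be the non-negative extremal function turning~(\ref{multiplicative_inequality_fractional}) into equality (its existence is assumed); by homogeneity and the scaling $\Phi_h(\cdot) = c\,\Phi((\cdot)/h)$, which acts on the three norms by independent powers of $c$ and $h$ — namely $\|\Phi_h\|_{L_\infty}= c\|\Phi\|_{L_\infty}$, $\|\Phi_h''\|_{L_s} = c h^{-2+1/s}\|\Phi''\|_{L_s}$, and $\|D^k_-\Phi_h\|_{L_\infty} = c h^{-k}\|D^k_-\Phi\|_{L_\infty}$ — we can realize, for any triple $(N_0, N_2)$ of positive numbers, a scaled extremal function $\Phi_{c,h}$ with $\|\Phi_{c,h}\|_{L_\infty(G)} = N_0$, $\|\Phi_{c,h}''\|_{L_s(G)} = N_2$, and $\|D^k_-\Phi_{c,h}\|_{L_\infty(G)} = K N_0^{1-\lambda} N_2^{\lambda}$. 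In particular, taking $N_0 = M_0$, $N_2 = M_2$ we get a function whose first and third norms are already $M_0$ and $M_2$ and whose middle norm equals the maximal possible value $K M_0^{1-\lambda} M_2^{\lambda} \geqslant M_k$. It remains to show that every intermediate value in $(0, K M_0^{1-\lambda} M_2^{\lambda}]$ for the middle norm is attained by some function with the first and third norms still pinned at $M_0$ and $M_2$. For this I would interpolate between the extremal function $\Phi_{c,h}$ (say shifted/reflected so it lives on a far part of $G$, realizing the maximal middle norm) and a function with very small $D^k_-$-norm but the same $L_\infty$- and $L_s$-second-derivative norms (e.g. a widely-spread-out copy, or a translate placed far away, or $\Phi$ itself scaled so that $h$ is large, exploiting that $h^{-k} \to 0$ while the ratio of the other two norms is governed by $h^{-2+1/s}$ — here one adjusts $c$ and $h$ jointly to keep $N_0$ and $N_2$ fixed while driving the middle norm down); then one forms a one-parameter family $f_t$ (for instance $f_t = $ a normalized combination, or a function built from disjointly-supported translated and rescaled copies whose relative scales vary continuously) along which $\|f_t\|_{L_\infty(G)} \equiv M_0$, $\|f_t''\|_{L_s(G)} \equiv M_2$, and $t \mapsto \|D^k_- f_t\|_{L_\infty(G)}$ is continuous; the intermediate value theorem then produces the required $f$ with middle norm exactly $M_k$. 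Continuity of the fractional-derivative norm along this family follows from Proposition~\ref{Prop:continuity} together with the integral representation~(\ref{integral_representationGeneral}), which makes $D^k_- f_t$ depend continuously on $f_t''$ in $L_s$.

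The main obstacle is the construction of the one-parameter family realizing all intermediate middle-norm values while keeping the two outer norms rigidly fixed at $M_0$ and $M_2$: one cannot simply multiply by a scalar (that moves $M_0$ and $M_2$) nor simply rescale (that moves them in correlated ways), so the family must combine disjointly supported, independently scaled pieces — a "large" piece carrying most of the $L_s$-second-derivative mass but little fractional-derivative mass, and the extremal piece — balanced so that the sup-norms do not add up but the $L_s$-norms of the second derivatives do. Getting the bookkeeping of these norms right (in particular that on $G = \mathbb{R}_+$ one has enough room to place translated copies, using semi shift-invariance) and verifying the endpoint values of the continuous parameter straddle $M_k$ is where the real work lies; everything else is the sharp inequality plus elementary scaling.
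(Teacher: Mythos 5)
The paper offers no written proof of this theorem (it is stated as a consequence ``using similar arguments'' from the classical literature on the Kolmogorov problem), so your attempt must stand on its own. The necessity half and the identification of the two endpoints (the scaled extremal function realizing $M_k = K M_0^{1-\lambda}M_2^{\lambda}$, and spread-out functions realizing arbitrarily small middle norm with the outer norms pinned) are correct in outline. But the sufficiency half is not a proof: you explicitly defer the construction of the connecting one-parameter family to ``the real work,'' and that construction \emph{is} the theorem. Two concrete problems remain. First, your stated justification for continuity of $t\mapsto\|D^k_-f_t\|_{L_\infty(G)}$ is wrong: the integral representation~(\ref{integral_representationGeneral}) has kernel $t^{r-1-k}$, which for $r=2$ and $k<2-1/s$ is \emph{not} in $L_{s'}(0,+\infty)$, so $D^k_-$ is not a bounded map from $\{f: f''\in L_s\}$ to $L_\infty$ and $D^k_-f_t$ does not ``depend continuously on $f_t''$ in $L_s$.'' The correct continuity argument must go through the additive inequality of the form~(\ref{Kolmogorov_inequality_Ls_additive}), which controls $\|D^k_-(f_t-f_{t'})\|_{L_\infty(G)}$ by \emph{both} $\|f_t-f_{t'}\|_{L_\infty(G)}$ and $\|f_t''-f_{t'}''\|_{L_s(G)}$; your family must therefore be continuous in both norms simultaneously, which is an additional requirement on the construction you have not supplied. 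Second, the theorem explicitly assumes the existence of a \emph{non-negative} extremal function, and your argument never uses non-negativity; since the extremal functions produced in Section~\ref{Sec:Low_smoothness} are sign-changing, this hypothesis is not vacuous, and a proof that ignores it is almost certainly missing the place where it is needed (typically, to make the sup-norm behave additively or monotonically along the deformation, so that $\|f_t\|_{L_\infty(G)}$ can actually be held constant while the pieces are rebalanced).

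A repairable version of your plan exists, and you would do well to simplify it: do not insist on keeping $\|f_t\|_{L_\infty(G)}$ and $\|f_t''\|_{L_s(G)}$ rigidly equal to $M_0$ and $M_2$ along the path. It suffices to produce a family $\{f_t\}$, continuous in $L_\infty(G)$ and with $f_t''$ continuous in $L_s(G)$, with all three norms nonzero, along which the scale-invariant ratio $\Theta(f_t)=\|D^k_-f_t\|_{L_\infty(G)}\big/\bigl(\|f_t\|_{L_\infty(G)}^{1-\lambda}\|f_t''\|_{L_s(G)}^{\lambda}\bigr)$ runs continuously from $K$ down to a value $\leqslant M_k/(M_0^{1-\lambda}M_2^{\lambda})$; the intermediate value theorem gives $t^*$ with $\Theta(f_{t^*})=M_k/(M_0^{1-\lambda}M_2^{\lambda})$, and then the two-parameter rescaling $f\mapsto c\,f(\cdot/h)$, which preserves $\Theta$, pins the outer norms at exactly $M_0$ and $M_2$ and hence the middle norm at exactly $M_k$. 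Even in this form you must still exhibit the family explicitly (disjointly supported pieces, with attention on $\mathbb{R}_+$ to the fact that $D^k_-f(x)$ depends on $f$ to the right of $x$, so ``far away'' pieces still contribute) and verify the two continuity properties; as written, your text names the obstacle but does not overcome it.
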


\begin{thm}
\label{Thm:Kolmogorov_problem_s=1}
Let $G=\mathbb{R}$ or $G = \mathbb{R}_+$, $s=1$, $p=q=\infty$, $k\in\left(0,2-1/s\right)$, $r=2$, and $M_0$, $M_k$, $M_2$ be positive numbers. Assume that $K$ is a sharp constant in inequality~(\ref{multiplicative_inequality_fractional}). Then there exists a function $f\in L^{r}_{\infty,s}(G)$ satisfying equalities~(\ref{Kolmogorov_condition}) if and only if $M_k < K \cdot M_0^{1-\lambda} M_2^{\lambda}$ where $\lambda = \frac{k}{2-1/s}$.
\end{thm}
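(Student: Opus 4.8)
The plan is to reduce both implications of Theorem~\ref{Thm:Kolmogorov_problem_s=1} to a single statement about the set of attainable ``ratios''. For $f\in L^{2}_{\infty,1}(G)$ with $f''\not\equiv 0$ put
\[
	R(f):=\frac{\left\|D^{k}_{-}f\right\|_{L_\infty(G)}}{\left\|f\right\|_{L_\infty(G)}^{1-k}\left\|f''\right\|_{L_1(G)}^{k}}.
\]
Since $s=1$ makes $\lambda=k$, Corollaries~\ref{Cor:Main_Corollary_Ls} (for $G=\mathbb R_+$) and~\ref{Cor:Kolmogorov_inequality_R_k_0..1} (for $G=\mathbb R$) say precisely that the sharp constant $K$ is finite and equals $\sup R(f)$. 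I claim that the range of $R$ is exactly the open interval $(0,K)$; granting this, the theorem follows by the usual dilation argument: given $g\in L^{2}_{\infty,1}(G)$ with $g''\not\equiv 0$ and positive numbers $M_0,M_k,M_2$, the function $f(x):=c\,g(x/h)$ with $c:=M_0/\left\|g\right\|_{L_\infty(G)}$ and $h:=M_0\left\|g''\right\|_{L_1(G)}/(\left\|g\right\|_{L_\infty(G)}M_2)$ (both positive; $x\mapsto x/h$ preserves $\mathbb R_+$) satisfies $\left\|f\right\|_{L_\infty(G)}=M_0$, $\left\|f''\right\|_{L_1(G)}=M_2$ and, by the homogeneity $D^{k}_{-}[g(\cdot/h)]=h^{-k}(D^{k}_{-}g)(\cdot/h)$, also $\left\|D^{k}_{-}f\right\|_{L_\infty(G)}=M_0^{1-k}M_2^{k}R(g)$. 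Hence~(\ref{Kolmogorov_condition}) is solvable for a given triple if and only if $M_kM_0^{k-1}M_2^{-k}$ lies in the range of $R$, i.e. if and only if $M_k<K\,M_0^{1-k}M_2^{k}$.

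To prove the claim, note first that $R(f)\le K$ by the cited Corollaries, and $R(f)>0$ because $D^{k}_{-}f=\Gamma(2-k)^{-1}\int_0^{+\infty}t^{1-k}f''(\cdot+t)\,dt$ is, up to a constant, the right-sided Riemann--Liouville fractional integral of order $2-k$ of $f''$, which vanishes identically only when $f''\equiv0$. The heart of the matter is that $K$ is \emph{not} attained. Suppose $R(f)=K$. Then for the $h$ that minimises the additive inequality $\left\|D^{k}_{-}f\right\|_{L_\infty(G)}\le h^{-k}\bigvee_G\Omega\cdot\|f\|_{L_\infty(G)}+h^{1-k}\left\|\mathcal R_{2-k}-\Omega^{[1]}\right\|_{L_\infty(G)}\|f''\|_{L_1(G)}$ of Theorem~\ref{Extremal_absent} (in the form of Corollary~\ref{Main_Corollary_Ls_prime}) this inequality must be an equality; choosing $x_n$ with $|D^{k}_{-}f(x_n)|\to\|D^{k}_{-}f\|_{L_\infty(G)}$ and expanding $D^{k}_{-}f(x_n)=\int_Gf(x_n+t)\,d\Omega(t)+\int_G(\mathcal R_{2-k}-\Omega^{[1]})(t)f''(x_n+t)\,dt$, the bound $\bigvee_G\Omega\cdot\|f\|_{L_\infty(G)}$ on the first term forces the $L_1$--$L_\infty$ Hölder estimate for the second term to become an equality in the limit. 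Since the weight $\mathcal R_{2-k}-\Omega^{[1]}$ is a rescaled copy of the continuous, compactly supported, piecewise strictly monotone function $\tau$ of Subsections~\ref{SubSec:R+k0..1r=2} and~\ref{SubSec:Rk0..1r=2}, it attains its uniform norm only on a set of measure zero, so $\int_{|t-t^\ast|>\delta}|f''(x_n+t)|\,dt\to0$ for every $\delta>0$, where $t^\ast$ is a maximiser of $|\mathcal R_{2-k}-\Omega^{[1]}|$. But then either $\{x_n\}$ has a finite cluster point, forcing $f''$ to be concentrated at a single point, or $x_n\to\infty$, forcing $\|f''\|_{L_1(G)}$ to be carried by infinitely many pairwise disjoint short intervals; both contradict $0<M_2=\|f''\|_{L_1(G)}<\infty$. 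Hence $K$ is not in the range of $R$, so the range is contained in $(0,K)$.

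It remains to show that every $\rho\in(0,K)$ is attained by $R$. Values close to $K$ are realised by the Steklov averages $\Phi_\varepsilon:=S_\varepsilon\Phi_{k,1}\in L^{2}_{\infty,1}(G)$ of the limiting extremal $\Phi_{k,1}$ (their second derivatives being sums of indicator functions), for which $R(\Phi_\varepsilon)\to K$ --- this is exactly the sharpness assertion in the cited Corollaries. Arbitrarily small values are realised by a smoothly truncated long oscillation $g_N$ (say $g_N\approx\sin$ over $N$ periods, placed far to the right when $G=\mathbb R_+$): then $\left\|g_N\right\|_{L_\infty(G)}=1+o(1)$ and $\left\|g_N''\right\|_{L_1(G)}\asymp N$, while $\left\|D^{k}_{-}g_N\right\|_{L_\infty(G)}=O(1)$ because $\int_0^{+\infty}t^{1-k}\sin(x+t)\,dt$ converges (for $k\in(0,1)$) uniformly in $x$ to an $O(1)$ quantity, so $R(g_N)=O(N^{-k})\to0$. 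Given $\rho\in(0,K)$, pick $N$ with $R(g_N)<\rho$ and $\varepsilon$ with $R(\Phi_\varepsilon)>\rho$, translate $g_N$ so that $g_N''$ and $\Phi_\varepsilon''$ have disjoint supports, and consider the segment $f_t:=(1-t)g_N+t\Phi_\varepsilon$, $t\in[0,1]$; then $f_t''=(1-t)g_N''+t\Phi_\varepsilon''\not\equiv0$ for all $t$, the map $t\mapsto R(f_t)$ is continuous, and $R(f_0)<\rho<R(f_1)$, so the intermediate value theorem gives $R(f_{t^\ast})=\rho$. This proves that the range of $R$ equals $(0,K)$ and hence the theorem.

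I expect the non-attainment step to be the main obstacle: it requires identifying the precise conditions for equality in the sharp inequality and then using the explicit unimodal shape of the weight $\tau$ from Subsections~\ref{SubSec:R+k0..1r=2} and~\ref{SubSec:Rk0..1r=2} to rule out concentration of $f''\in L_1$ (the careful point being that a maximising sequence for $|D^{k}_{-}f|$ need not stay in a bounded set, so both the ``finite cluster point'' and the ``escape to infinity'' alternatives must be excluded). The dilation bookkeeping, the injectivity of the fractional integral, and the small-ratio construction are routine.
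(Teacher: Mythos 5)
The paper does not actually supply a proof of Theorem~\ref{Thm:Kolmogorov_problem_s=1}: Subsection~\ref{SubSec:Kolm} only states it as a consequence of the sharp inequalities ``using similar arguments'' (i.e.\ the standard scheme for the Kolmogorov three-numbers problem from~\cite{Kolm_85,BKKP-kn}). Your proposal reconstructs exactly that scheme --- reduce by dilation to the question of which values the ratio $R(f)$ attains, show the range is $(0,K)$ by (a) non-attainment of $K$, (b) asymptotic attainment via the Steklov averages of $\varphi_{k,1}$, (c) small ratios via spread-out oscillations, and (d) an intermediate-value argument along a segment whose second derivatives have disjoint supports --- and the dilation bookkeeping, the reduction, and steps (b)--(d) are correct. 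So the approach is the intended one and the argument is essentially sound.

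Three points deserve more care before this counts as complete. First, the non-attainment step (the heart of the strictness $M_k<K\cdots$) hinges on the claim that the weight $\mathcal{R}_{2-k}-\Omega^{[1]}$ attains its uniform norm only on a null set. You assert this from the shape of $\tau$; you should actually verify it from the explicit formulas: for $G=\mathbb{R}_+$ one has $\tau_h(x)=x^{1-k}-h^{-k}x$ on $(0,h)$ and $0$ elsewhere, with a unique interior maximizer, so the claim is clear; for $G=\mathbb{R}$ you must also check that $\tau(p;\cdot)$ really is compactly supported on $[-p,1]$ --- this requires the cancellation $\omega^{[1]}(p;1)=1$ in the tail, which holds only if $\omega(p;\cdot)$ equals $+(1+p)^{-1}$ (not $-(1+p)^{-1}$ as printed) on $(-p,1]$; with that sign the function is piecewise linear/strictly concave and the maximizer set is finite, as you need. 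Second, your argument uses that the sharp constant $K$ coincides with the constant obtained by minimizing the additive inequality of Corollary~\ref{Main_Corollary_Ls_prime} over $h$; this is exactly what the sharpness assertion of that corollary (via Theorem~\ref{Extremal_absent}) provides, but it should be said explicitly, since otherwise ``equality in the multiplicative inequality implies equality in the additive one at the optimal $h$'' does not follow. Third, the bound $\|D^{k}_{-}g_N\|_{L_\infty}=O(1)$ for the long oscillation is cleaner if you quote the first-order inequality (Corollary~\ref{Cor:r=1} with $E=L_\infty$, $h=1$), which gives $\|D^{k}_{-}g_N\|_{L_\infty}\lesssim\|g_N\|_{L_\infty}+\|g_N'\|_{L_\infty}=O(1)$, rather than appealing to convergence of the oscillatory integral. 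The side remark that $R(f)>0$ via injectivity of the fractional integral is not needed (only $M_k>0$ matters) and, as stated, would itself require a moment condition on $f''$ to justify.
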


\subsection{Sharp Kolmogorov type inequalities for the Hadamard fractional derivatives}
\label{SubSec:Hadamard}

Let $\mathscr{D}$ be the operator mapping every differentiable function $f:\mathbb{R}_+\to\mathbb{R}$ into the function $\mathscr{D}f(x) = xf'(x)$, $x\in\mathbb{R}_+$, i.e. $\mathscr{D} = x\frac{d}{dx}$. The fractional power, $k\in\mathbb{R}_+\setminus\mathbb{N}$, of operator $\mathscr{D}$ is delivered by the Hadamard fractional differentiation operator $\mathscr{D}_{\pm}^{k}$ (see~\cite[\S 18]{Sam}) which is defined as follows: for $f:\mathbb{R}_+\to\mathbb{R}$ and $x\in\mathbb{R}_+$,
\[
	\begin{array}{rcl}
		\displaystyle\mathscr{D}^{k}_{+}f(x) &=& \displaystyle \frac{1}{\varkappa(k,r)}\int_0^1 \sum\limits_{m=0}^{r}(-1)^m \left(r \atop m\right) f\left(u^m x\right)\,\frac{du}{u|\ln{u}|^{1+k}},\\
		\displaystyle\mathscr{D}^{k}_{-}f(x) &=& \displaystyle \frac{1}{\varkappa(k,r)}\int_1^{+\infty} \sum\limits_{m=0}^{r}(-1)^m \left(r \atop m\right) f\left(u^m x\right)\,\frac{du}{u|\ln{u}|^{1+k}},
	\end{array}
\]
where $r\in\mathbb{N}$, $r>k$, and $\varkappa(k,r)$ was defined in~(\ref{difference}). Some Kolmogorov type inequalities for the Hadamard fractional derivatives were considered in paper~\cite{Bab_Parf_09}

For an arbitrary function $f:\mathbb{R}_+\to\mathbb{R}$, let us define the function $g:\mathbb{R}\to\mathbb{R}$ as follows: $g(t) = f\left(e^{t}\right)$, $t\in\mathbb{R}$. Then for every $x\in\mathbb{R}_+$, we have $\mathscr{D}^{k}_{\pm}f(x) = D^{k}_{\pm}g(\ln{x})$. As a result, $\left\|\mathscr{D}^{k}_{\pm}f\right\|_{L_{\infty}\left(\mathbb{R}_+\right)} = \left\|D^{k}_{\pm}g\right\|_{L_{\infty}(\mathbb{R})}$ and for every $1\leqslant s < \infty$,
\[
	\int_0^{+\infty} \left|\mathscr{D}^{k}_{\pm}f(x)\right|^{s}\,\frac{dx}{x} = \left\|D^{k}_{\pm}g\right\|_{L_{s}(\mathbb{R})}^s.
\]
The latter formula allows deducing sharp Kolmogorov type inequalities for the weighted $L_s$-norms of the Hadamard fractional derivatives from sharp Kolmogorov type inequalities for $L_s$-norms of the Marchaud fractional derivatives. Let us present rigorous statements. For $1\leqslant s\leqslant \infty$, by $\mathscr{L}_s$ we denote the space of functions $f:\mathbb{R}_+\to\mathbb{R}$ endowed with the norm
\[
	\|f\|_{\mathscr{L}_s} = \left\{\begin{array}{ll}
		\displaystyle \left(\int_0^{+\infty} |f(x)|^{s}\,\frac{dx}{x}\right)^{1/s}, & 1\leqslant s<\infty, \\ [8pt]
		\displaystyle \|f\|_{L_{\infty}\left(\mathbb{R}_+\right)}, & s=\infty.
	\end{array}\right.
\]
For $r\in\mathbb{N}$, let $\mathscr{L}_{\infty,s}^r$ be the space of functions $f\in L_{\infty}\left(\mathbb{R}_+\right)$ such that $f^{(r-1)}$ is locally absolutely continuous on $\mathbb{R}_+$, and $f^{(r)}\in\mathscr{L}_{s}$.

From above arguments we conclude that the following proposition holds true.

\begin{thm}
\label{Thm:connection_between_Hadamard_and_Marchaud}
Let the Kolmogorov type inequality~(\ref{multiplicative_inequality}) with sharp constant $K$ hold true for some collection of parameters $1\leqslant p,q,s\leqslant \infty$, $r\in\mathbb{N}$, $k\in(0,r)$ and $\mu,\lambda\in\mathbb{R}$, $0\leqslant \mu = 1-\lambda \leqslant 1$. Then for the same collection of parameters and for every $f\in \mathscr{L}_{p,s}^{r}$, there holds true sharp inequality
\[
  \left\|\mathscr{D}_{\pm}^{k}f\right\|_{\mathscr{L}_q} \leqslant K\, \left\|f\right\|_{\mathscr{L}_p}^{\mu} \left\|\mathscr{D}^rf\right\|_{\mathscr{L}_s}^{\lambda}.
\]
\end{thm}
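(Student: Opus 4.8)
The plan is to reduce the assertion to the already-assumed Kolmogorov type inequality for the Marchaud derivative by means of the logarithmic substitution $g(t):=f(e^{t})$, $t\in\mathbb{R}$, which conjugates the Euler operator $\mathscr{D}=x\,\tfrac{d}{dx}$ on $\mathbb{R}_{+}$ to ordinary differentiation on $\mathbb{R}$. First I would record three identities. By the chain rule $g'(t)=e^{t}f'(e^{t})=\left(\mathscr{D}f\right)(e^{t})$ and, by induction, $g^{(m)}(t)=\left(\mathscr{D}^{m}f\right)(e^{t})$ for $m=0,1,\dots,r$; here $\mathscr{D}^{m}=\sum_{j=1}^{m}c_{m,j}\,x^{j}(d/dx)^{j}$ with integer coefficients $c_{m,j}$, $c_{m,m}=1$ (Stirling numbers of the second kind), and this triangular system inverts to express $x^{m}f^{(m)}$ through $\mathscr{D}f,\dots,\mathscr{D}^{m}f$. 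Second, as recorded just before the statement, a logarithmic change of the integration variable in the defining integrals gives $\mathscr{D}^{k}_{\pm}f(x)=D^{k}_{\pm}g(\ln x)$ for $x\in\mathbb{R}_{+}$, valid wherever the right-hand side is defined (in particular on $L_{p,s}^{r}(\mathbb{R})$ when $k<r-1/s$, by Proposition~\ref{Prop:continuity}). Third, the change of variable $x=e^{t}$, $dx/x=dt$, yields $\|h\|_{\mathscr{L}_{u}}=\bigl\|h(e^{\cdot})\bigr\|_{L_{u}(\mathbb{R})}$ for every $1\leqslant u\leqslant\infty$ and every measurable $h:\mathbb{R}_{+}\to\mathbb{R}$.

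From these I would conclude that $f\mapsto g$ is a bijection of $\mathscr{L}_{p,s}^{r}$ onto $L_{p,s}^{r}(\mathbb{R})$: since $\exp$ and $\ln$ are $C^{\infty}$ diffeomorphisms, $f^{(r-1)}$ is locally absolutely continuous on $\mathbb{R}_{+}$ exactly when $g^{(r-1)}$ is locally absolutely continuous on $\mathbb{R}$, while the first and third identities give $\|f\|_{\mathscr{L}_{p}}=\|g\|_{L_{p}(\mathbb{R})}$, $\bigl\|\mathscr{D}^{k}_{\pm}f\bigr\|_{\mathscr{L}_{q}}=\bigl\|D^{k}_{\pm}g\bigr\|_{L_{q}(\mathbb{R})}$ and $\bigl\|\mathscr{D}^{r}f\bigr\|_{\mathscr{L}_{s}}=\bigl\|g^{(r)}\bigr\|_{L_{s}(\mathbb{R})}$. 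Applying to $g$ the Kolmogorov type inequality assumed in the hypothesis — for $\mathscr{D}^{k}_{+}$ I would first note that on $\mathbb{R}$ the reflection $t\mapsto-t$ conjugates $D^{k}_{+}$ to $D^{k}_{-}$ and preserves every $L_{u}(\mathbb{R})$ norm, so $D^{k}_{+}$ has the same sharp constant $K$ — and rewriting each of the three norms through the identities above gives the asserted inequality for $\mathscr{D}^{k}_{\pm}f$ with the same constant $K$.

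Sharpness would then follow at once: as $f\mapsto g$ is a bijection between $\mathscr{L}_{p,s}^{r}$ and $L_{p,s}^{r}(\mathbb{R})$ transporting the three norms onto the corresponding ones over $\mathbb{R}$, the supremum of $\bigl\|\mathscr{D}^{k}_{\pm}f\bigr\|_{\mathscr{L}_{q}}\big/\bigl(\|f\|_{\mathscr{L}_{p}}^{\mu}\|\mathscr{D}^{r}f\|_{\mathscr{L}_{s}}^{\lambda}\bigr)$ over $\mathscr{L}_{p,s}^{r}$ coincides with the supremum of $\bigl\|D^{k}_{\pm}g\bigr\|_{L_{q}(\mathbb{R})}\big/\bigl(\|g\|_{L_{p}(\mathbb{R})}^{\mu}\|g^{(r)}\|_{L_{s}(\mathbb{R})}^{\lambda}\bigr)$ over $L_{p,s}^{r}(\mathbb{R})$, which equals $K$ by hypothesis; equivalently, any extremal function (or extremal sequence) for the Marchaud inequality pulls back through $g\mapsto g(\ln\cdot)$ to one for the Hadamard inequality.

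The main obstacle, and essentially the only point not reducible to routine changes of variables, is the second half of the first step: justifying the iterated chain rule $g^{(m)}=\left(\mathscr{D}^{m}f\right)(e^{\cdot})$ in the a.e./locally-absolutely-continuous regime rather than for smooth $f$, and, hand in hand with it, verifying that the defining condition of $\mathscr{L}_{p,s}^{r}$ — which is a condition on $\mathscr{D}^{r}f$ and hence, via the identity, precisely on $g^{(r)}$ — matches the one for $L_{p,s}^{r}(\mathbb{R})$. I would carry this out by induction on $m$ using the triangular expansions $\mathscr{D}^{m}=\sum_{j=1}^{m}c_{m,j}x^{j}(d/dx)^{j}$ and their inverses; all remaining steps are mechanical.
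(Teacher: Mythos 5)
Your proposal is correct and follows essentially the same route as the paper: the authors prove this theorem precisely by the logarithmic substitution $g(t)=f(e^{t})$, recording the identities $\mathscr{D}^{k}_{\pm}f(x)=D^{k}_{\pm}g(\ln x)$, $g^{(m)}(t)=(\mathscr{D}^{m}f)(e^{t})$, and the norm equalities $\|h\|_{\mathscr{L}_{u}}=\|h(e^{\cdot})\|_{L_{u}(\mathbb{R})}$, and then transporting the Marchaud inequality and its extremal (sequence) back to $\mathbb{R}_{+}$. Your additional care about the iterated chain rule in the locally absolutely continuous regime and the reflection handling of $D^{k}_{+}$ only makes explicit what the paper leaves implicit.
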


Combining Theorem~\ref{Thm:connection_between_Hadamard_and_Marchaud} with Corollaries~\ref{Cor:Kolmogorov_inequality_R_k_0..1} and~\ref{Cor:Kolmogorov_inequality_R_k_1..2} we obtain

\begin{cor}
\label{Cor:Multiplicative_Hadamard_k_0..1}
Let $1\leqslant s\leqslant\infty$, $s' = s/(s-1)$, $k\in\left(0,2-1/s\right)$ and $\lambda = k/(2-1/s)$. Then for every function $f\in \mathscr{L}_{\infty,s}^{2}$, there holds true sharp inequality
\[
  \left\|\mathscr{D}^{k}_{\pm}f\right\|_{\mathscr{L}_{\infty}} \leqslant \frac{\left\|D^{k}_{-}\Phi_{k,s}\right\|_{L_{\infty}(\mathbb{R})}}{\left\|\Phi_{k,s}\right\|^{1-\lambda}_{L_{\infty}(\mathbb{R})}}\, \left\|f\right\|_{\mathscr{L}_{\infty}}^{1-\lambda} \left\|\mathscr{D}^{2}f\right\|_{\mathscr{L}_{s}}^{\lambda},
\]
where the function $\Phi_{k,s}$ was defined in Subsections~\ref{SubSec:R+k0..1r=2}--\ref{SubSec:Rk1..2r=2}.
\end{cor}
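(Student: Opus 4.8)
The plan is to read the statement off Theorem~\ref{Thm:connection_between_Hadamard_and_Marchaud}, feeding it the sharp Kolmogorov type inequality for the Marchaud fractional derivative on the whole line that has already been obtained in Subsections~\ref{SubSec:Rk0..1r=2} and~\ref{SubSec:Rk1..2r=2}. Fix $1\leqslant s\leqslant\infty$ and a non-integer $k\in\left(0,2-1/s\right)$, and put $\lambda=k/(2-1/s)$. First I would split into the two ranges already covered: if $k\in(0,1)$ — the only possibility when $s=1$ — Corollary~\ref{Cor:Kolmogorov_inequality_R_k_0..1} applies, while if $1<s\leqslant\infty$ and $k\in\left(1,2-1/s\right)$, Corollary~\ref{Cor:Kolmogorov_inequality_R_k_1..2} applies. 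In either case one obtains, for every $g\in L^{2}_{\infty,s}(\mathbb{R})$, the sharp inequality
\[
	\left\|D^{k}_{-}g\right\|_{L_\infty(\mathbb{R})}\leqslant\frac{\left\|D^{k}_{-}\Phi_{k,s}\right\|_{L_\infty(\mathbb{R})}}{\left\|\Phi_{k,s}\right\|_{L_\infty(\mathbb{R})}^{1-\lambda}}\,\left\|g\right\|_{L_\infty(\mathbb{R})}^{1-\lambda}\left\|g''\right\|_{L_s(\mathbb{R})}^{\lambda},
\]
with sharp constant $K:=\left\|D^{k}_{-}\Phi_{k,s}\right\|_{L_\infty(\mathbb{R})}\big/\left\|\Phi_{k,s}\right\|_{L_\infty(\mathbb{R})}^{1-\lambda}$; since this ratio is invariant both under $\Phi_{k,s}\mapsto c\,\Phi_{k,s}$ and under $\Phi_{k,s}\mapsto\Phi_{k,s}(\cdot/h)$, the particular normalisation of $\Phi_{k,s}$ is immaterial.

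Next I would deal with the $+$-sided derivative by the reflection $x\mapsto-x$. For $\widetilde g(y):=g(-y)$, a change of variable in definition~(\ref{Marchaud_derivative}) gives $D^{k}_{+}\widetilde g(x)=D^{k}_{-}g(-x)$, while $\|\widetilde g\|_{L_\infty(\mathbb{R})}=\|g\|_{L_\infty(\mathbb{R})}$ and $\|\widetilde g''\|_{L_s(\mathbb{R})}=\|g''\|_{L_s(\mathbb{R})}$; hence the inequality above holds for $D^{k}_{+}$ on $\mathbb{R}$ with the very same sharp constant $K$, attained on the reflection of $\Phi_{k,s}$. Thus the hypothesis of Theorem~\ref{Thm:connection_between_Hadamard_and_Marchaud} is satisfied for the collection $p=q=\infty$, $r=2$, the given $s$ and $k$, $\mu=1-\lambda$, and both signs. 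Applying that theorem with $g(t)=f(e^{t})$ — for which $\mathscr{D}^{k}_{\pm}f(x)=D^{k}_{\pm}g(\ln x)$, $\mathscr{D}^{2}f(e^{t})=g''(t)$, $\|\mathscr{D}^{k}_{\pm}f\|_{\mathscr{L}_\infty}=\|D^{k}_{\pm}g\|_{L_\infty(\mathbb{R})}$, $\|f\|_{\mathscr{L}_\infty}=\|g\|_{L_\infty(\mathbb{R})}$, $\|\mathscr{D}^{2}f\|_{\mathscr{L}_s}=\|g''\|_{L_s(\mathbb{R})}$, and $f\in\mathscr{L}^{2}_{\infty,s}$ precisely when $g\in L^{2}_{\infty,s}(\mathbb{R})$ — transfers the displayed inequality verbatim to $\mathscr{D}^{k}_{\pm}f$ with the stated constant.

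Sharpness of the constant transfers the same way: an improvement of the constant for $\mathscr{D}^{k}_{\pm}$ would, through the inverse substitution $g\mapsto g(\ln(\cdot))$, yield an improvement for $D^{k}_{\pm}$ on $\mathbb{R}$, contradicting Corollary~\ref{Cor:Kolmogorov_inequality_R_k_0..1} or~\ref{Cor:Kolmogorov_inequality_R_k_1..2}. When $s>1$ the extremal function is simply $f_{0}(x):=\Phi_{k,s}(\ln x)$ (respectively $\Phi_{k,s}(-\ln x)$ for the $+$-sided derivative), which lies in $\mathscr{L}^{2}_{\infty,s}$ and turns the inequality into equality; when $s=1$, where Corollary~\ref{Cor:Kolmogorov_inequality_R_k_0..1} (via Corollary~\ref{Main_Corollary_Ls_prime} and Theorem~\ref{Extremal_absent}) furnishes only a near-extremal smoothing family rather than a genuine extremal, I would instead push that family through the substitution. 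The main bookkeeping is this last $s=1$ point together with the routine check that $t\mapsto e^{t}$ makes the logarithmic substitution an isometry between the relevant weighted spaces and intertwines $D^{k}_{\pm}$ with $\mathscr{D}^{k}_{\pm}$; once Theorem~\ref{Thm:connection_between_Hadamard_and_Marchaud} is in hand this is essentially already inside its proof, so I do not expect a genuine obstacle.
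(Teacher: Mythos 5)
Your proposal is correct and follows essentially the same route as the paper, which obtains this corollary precisely by combining Theorem~\ref{Thm:connection_between_Hadamard_and_Marchaud} with Corollaries~\ref{Cor:Kolmogorov_inequality_R_k_0..1} and~\ref{Cor:Kolmogorov_inequality_R_k_1..2} via the substitution $g(t)=f(e^{t})$. Your additional bookkeeping (the reflection argument for $\mathscr{D}^{k}_{+}$, the identity $\mathscr{D}^{2}f(e^{t})=g''(t)$, and the near-extremal family for $s=1$) only makes explicit what the paper leaves implicit.
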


\newpage

\section*{Figures}

\begin{figure}[h!]
	\includegraphics[width=1.5in, height=1.1in]{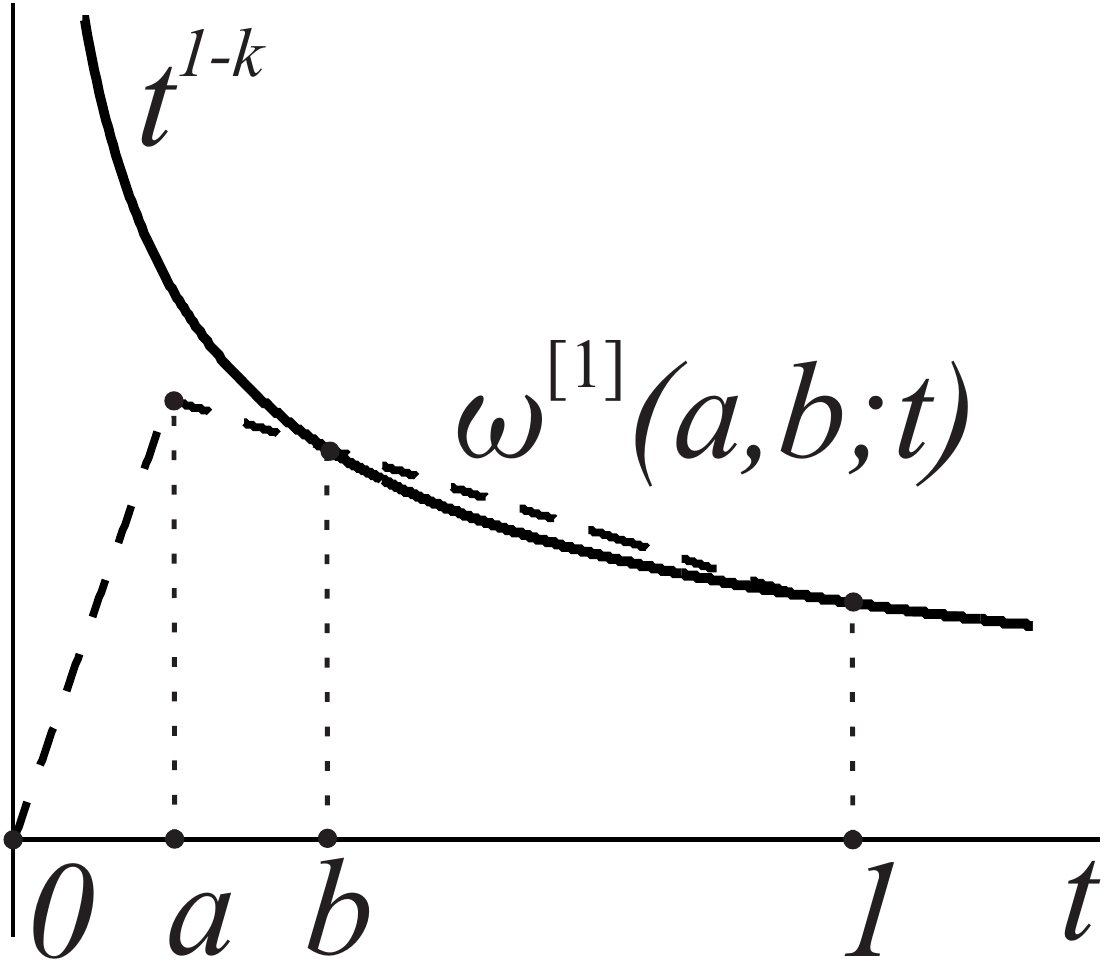}
	\includegraphics[width=1.5in, height=1.1in]{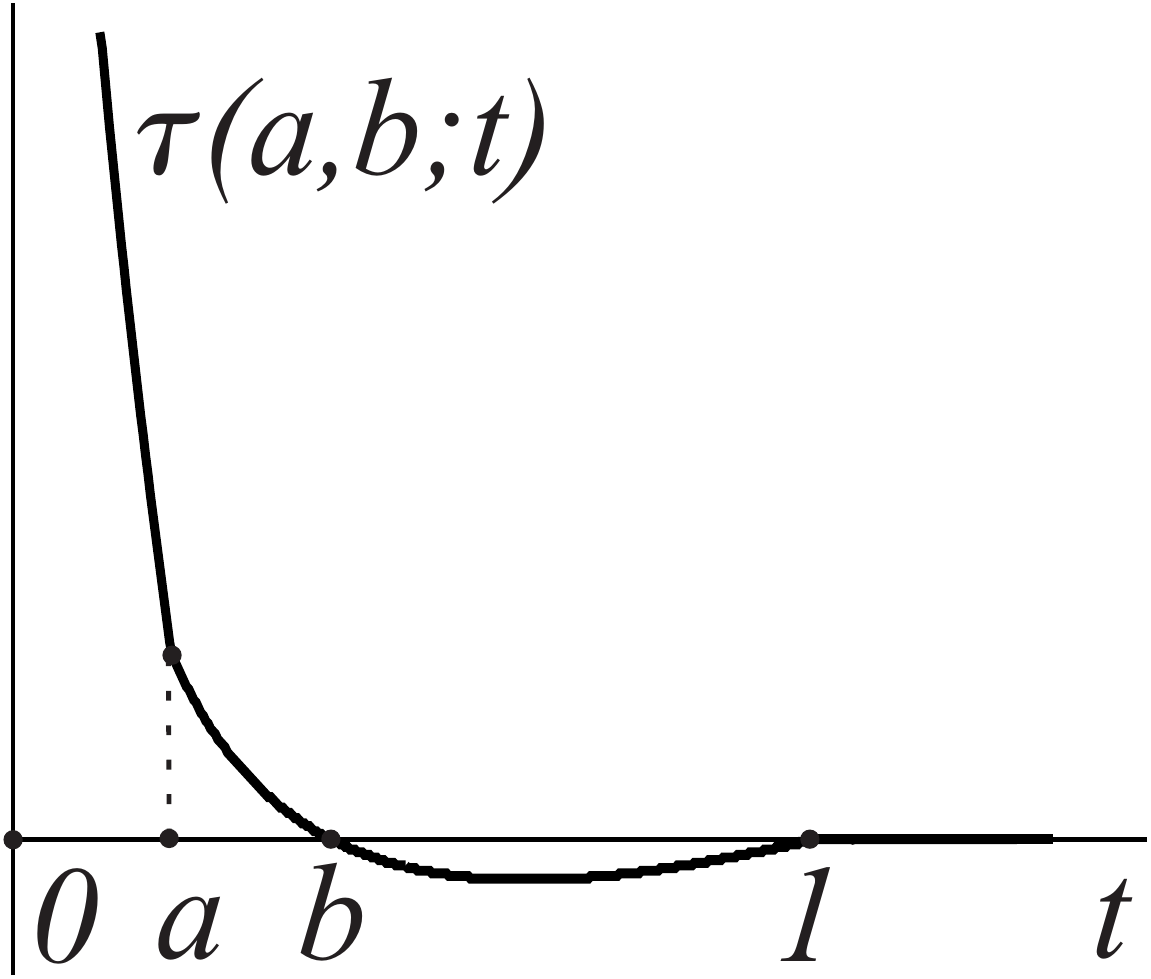}
	\includegraphics[width=1.5in, height=1.1in]{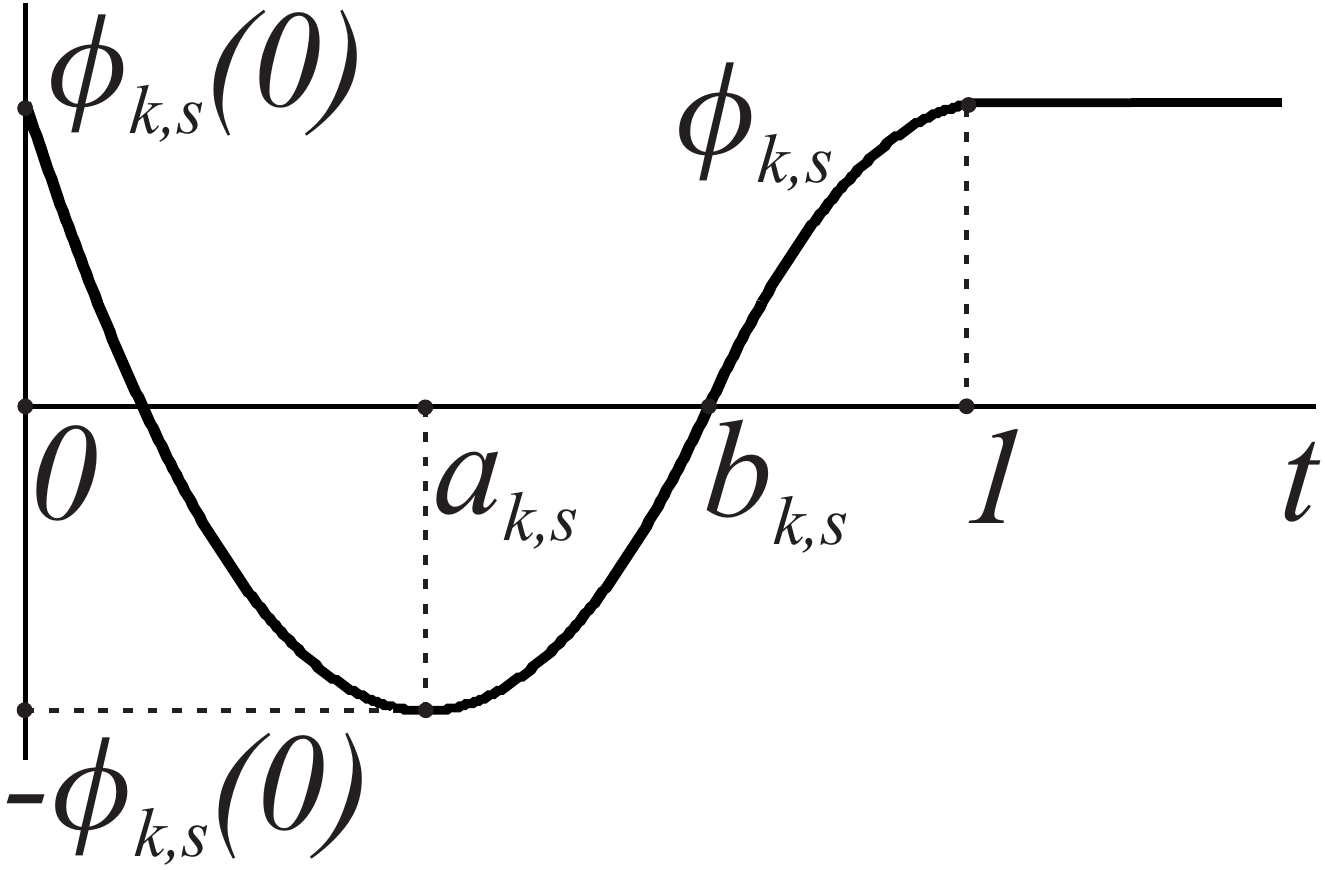}
	\caption{Functions from left to right: $\Gamma(2-k)\cdot\mathcal{R}_{2-k}$ and $\omega^{[1]}(a,b;\cdot)$; $\tau(a,b;\cdot)$; $\varphi_{k,s}$}
	\label{fig:1.3}
\end{figure}

\begin{figure}[h!]
	\includegraphics[width=1.5in, height=1.1in]{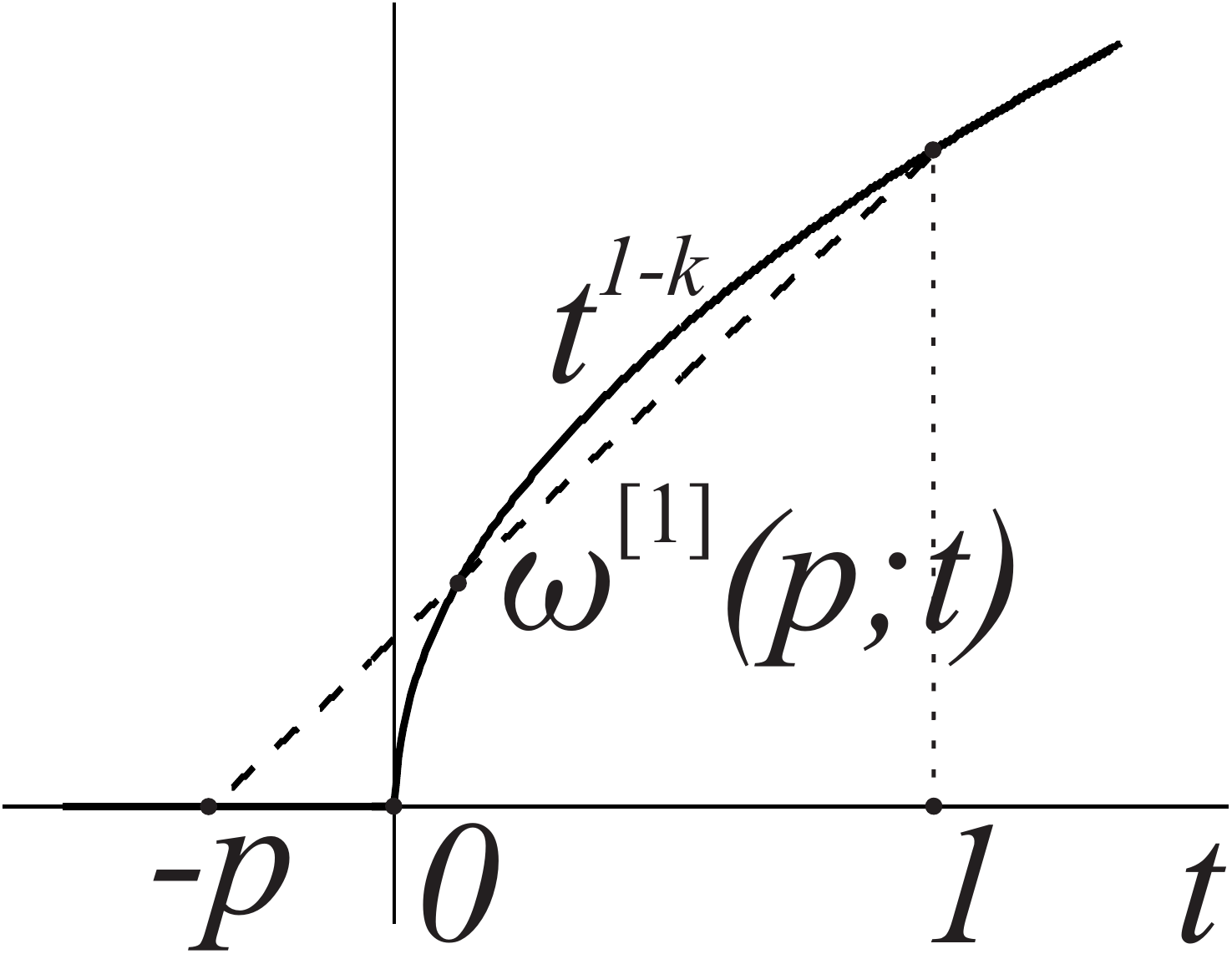}
	\includegraphics[width=1.5in, height=1.1in]{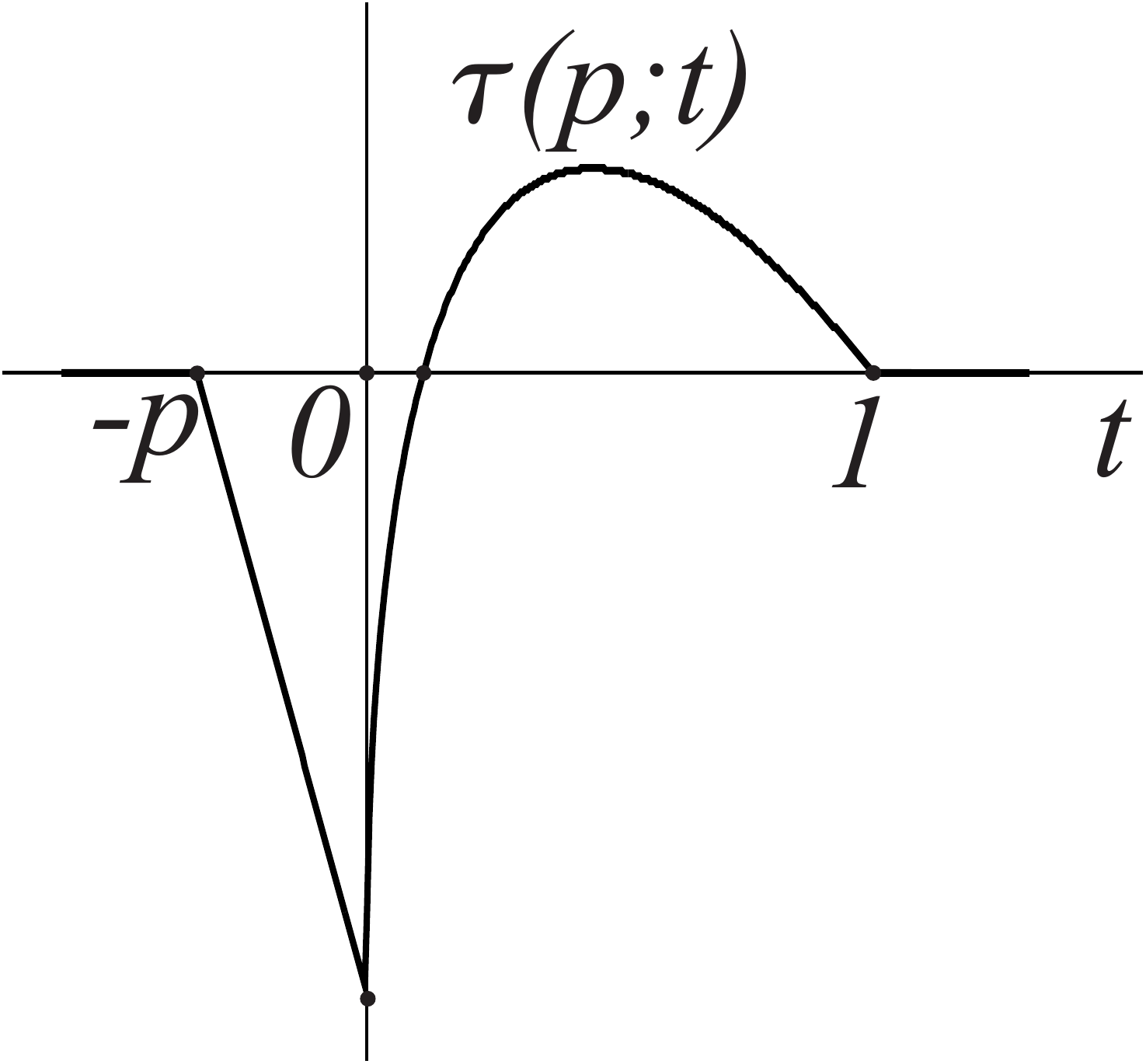}
	\includegraphics[width=1.5in, height=1.1in]{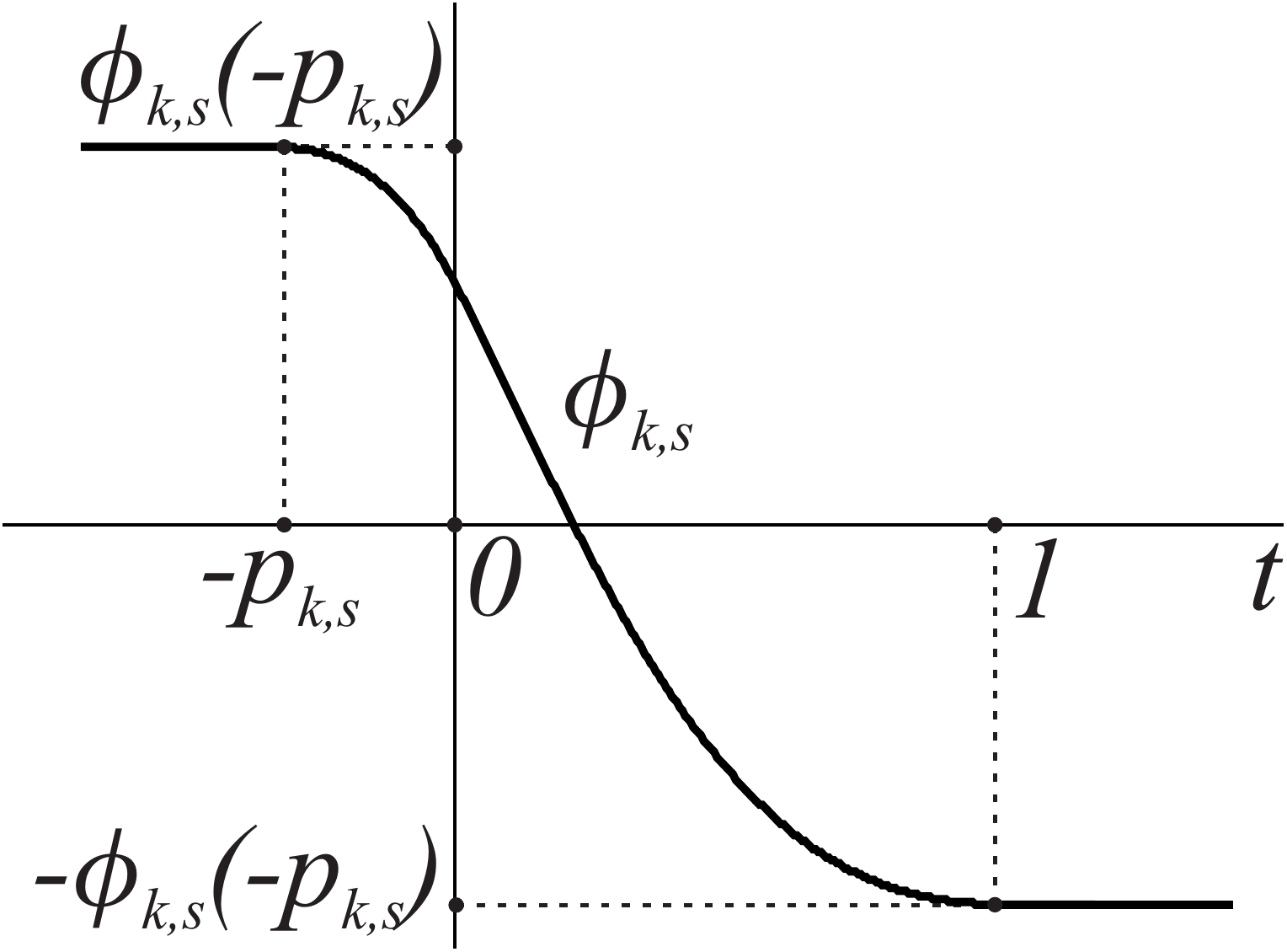}
	\caption{Functions from left to right: $\Gamma(2-k)\cdot\mathcal{R}_{2-k}$ and $\omega^{[1]}(p;\cdot)$; $\tau(p;\cdot)$; $\varphi_{k,s}$}
	\label{fig:1.5}
\end{figure}
\begin{figure}[h!]
	\includegraphics[width=1.5in, height=1.1in]{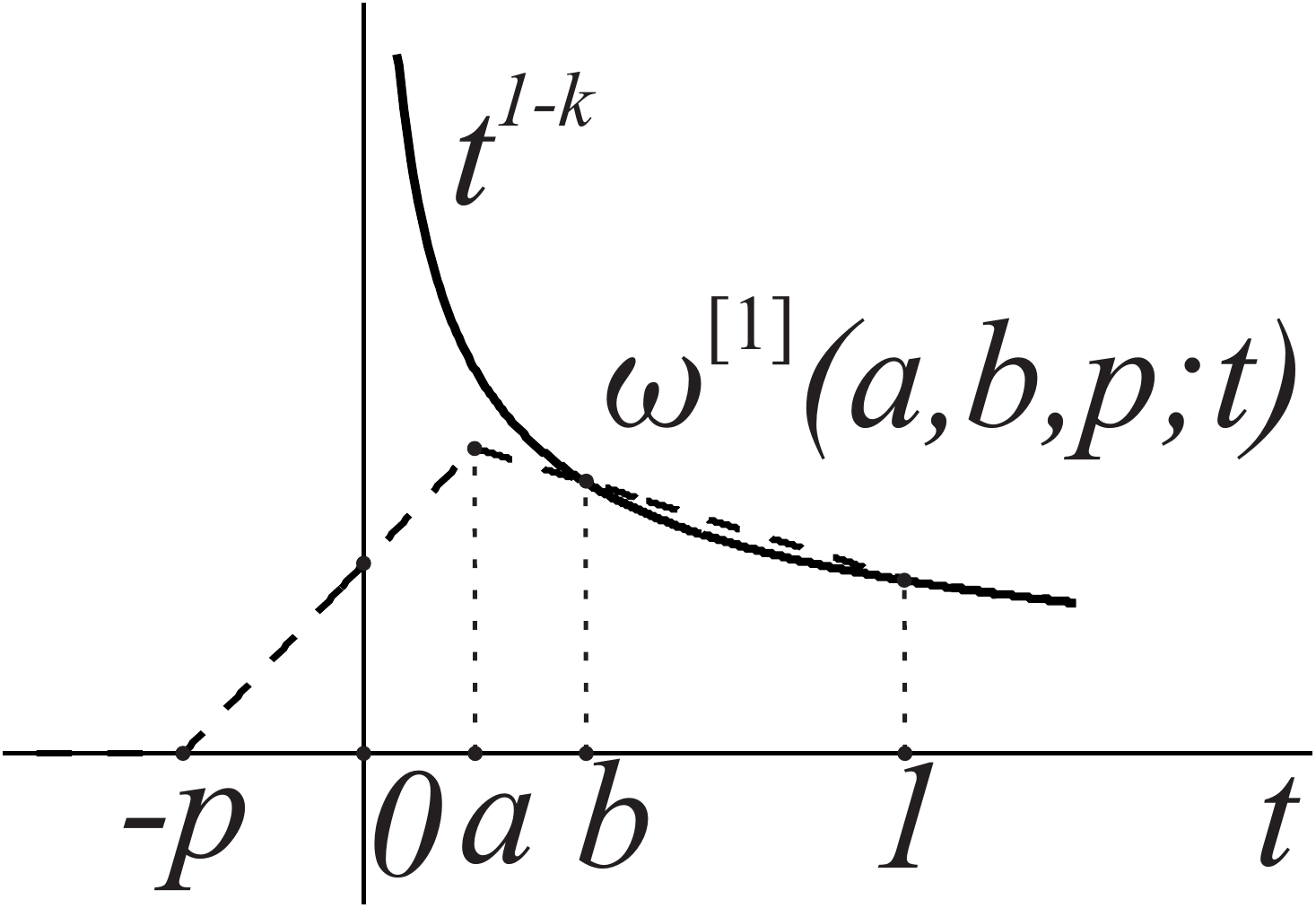}
	\includegraphics[width=1.5in, height=1.1in]{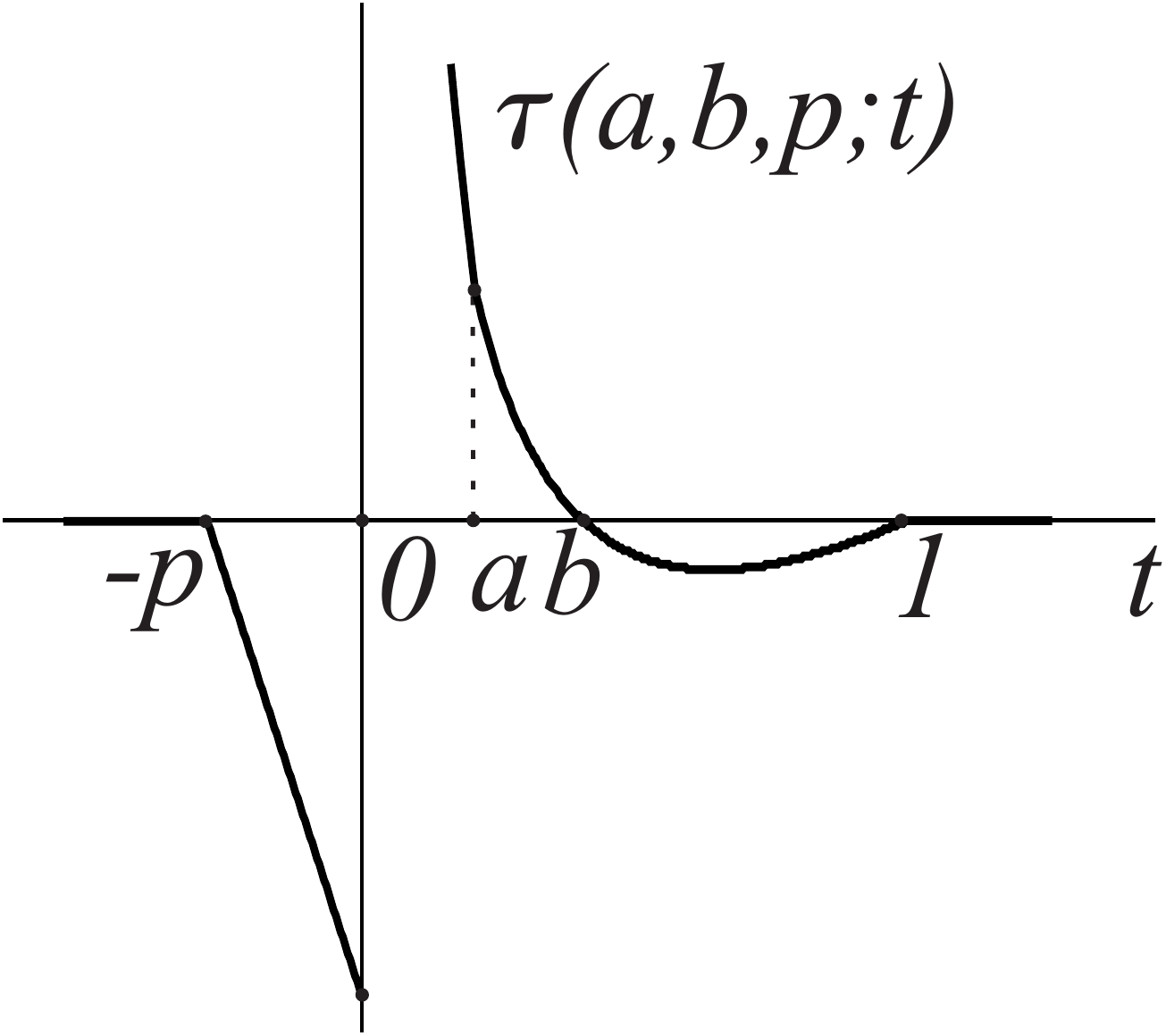}
	\includegraphics[width=1.5in, height=1.1in]{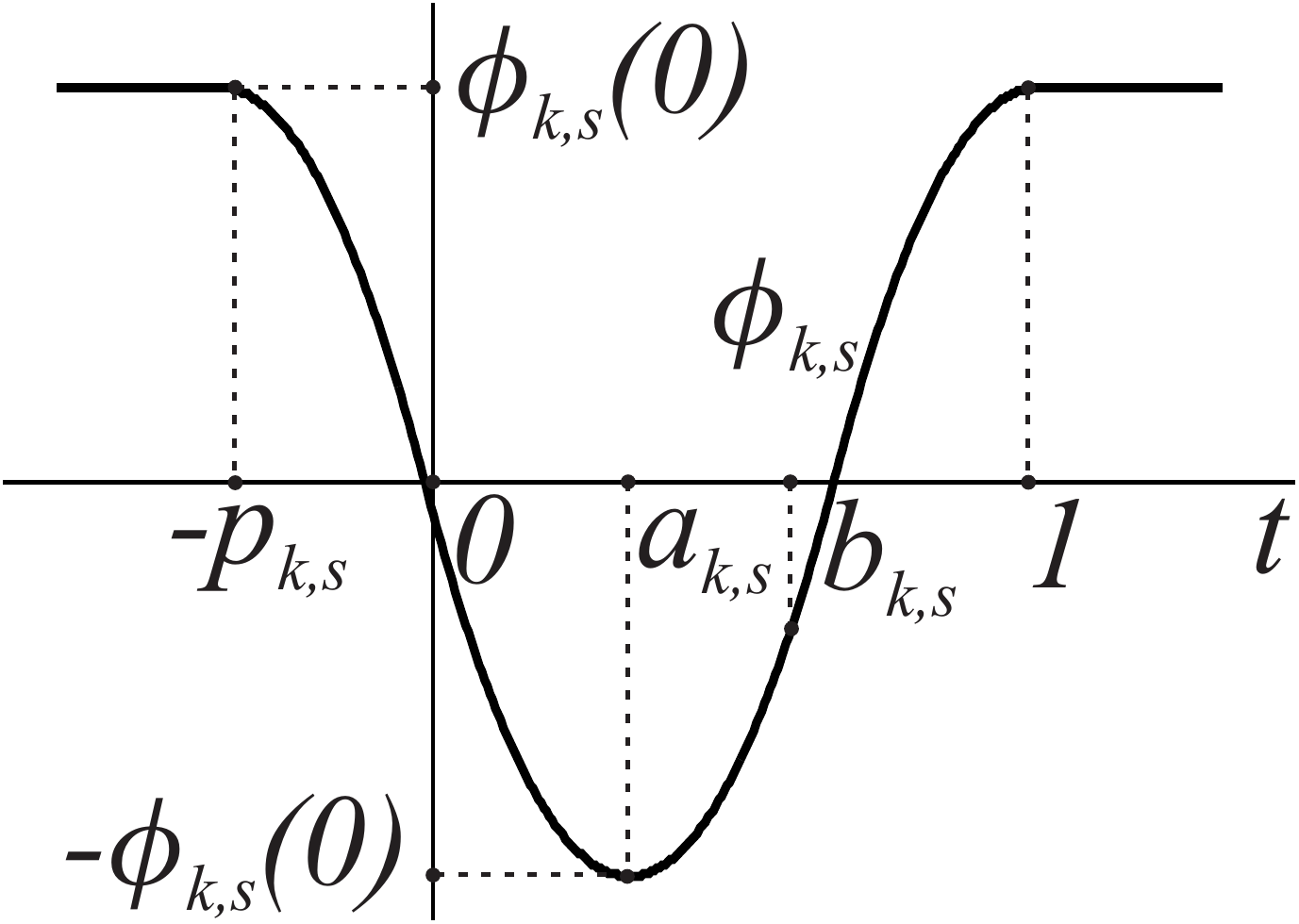}
	\caption{Functions from left to right: $\Gamma(2-k)\cdot\mathcal{R}_{2-k}$ and $\omega^{[1]}(a,b,p;\cdot)$; $\tau(a,b,p;\cdot)$; $\varphi_{k,s}$}
	\label{fig:1.7}
\end{figure}

\bibliographystyle{bmc-mathphys} 
\bibliography{BCPS_to_arxiv}      

\begin{thebibliography}{99}

\bibitem{Arestov_96}
{Arestov, V V},
{Approximation of unbounded operators by bounded operators and related extremal problems},
 {Russian Math. Surveys},
{1996},
 {51},
{6},
	pages  {1093-1126}

\bibitem{Bab_Pich_91}
{Babenko, V F and Pichugov, S A},
{About one method of {S}tein},
{Approximation of functions and summation of series},
 {Dnepropetrovsk State University},
 {420-422},
 {1991},
{Dnepropetrovsk},
 {43}

\bibitem{Domar_68}
 {Domar, Y},
	{An extremal problem related to {K}olmogoroff's inequality for bounded functions},
	{Arkiv f\"or Matematik},
	{1968},
	 {October},
	 {7},
	 {5},
	 {433-441}
	

\bibitem{ArGab}
 {Arestov, V V and Gabushin, V N},
	{Best approximation of unbounded operators by bounded operators},
	{Russian Math. (Iz. VUZ)},
	{1995},
	{39},
	 {11},
	 {38-63}
	

\bibitem{Bab}
 {Babenko, V F},
	{Investigations of {D}nepropetrovsk mathematicians related to inequalities for derivatives of periodic functions and their applications},
	{Ukrainian Mathematical Journal},
	{2000},
	 {52},
	 {1},
	 {8-28}
	

\bibitem{Bab_Parf_12}
	{Babenko, V F and Parfinovich, N V},
	{Kolmogorov type inequalities for norms of {R}iesz derivatives of multivariate functions and some applications},
	{Proc. Steklov Inst. Math. (Suppl.)},
	{2012},
	 {277},
	{suppl. 1},
	 {9-20}
	

\bibitem{Bab_Parf_Pich_14}
 {Babenko, V F and Parfinovich, N V and Pichugov, S A},
	{Kolmogorov-type inequalities for norms of {R}iesz derivatives of functions of several variables with {L}aplacian bounded in ${L}_\infty$ and related problems},
	{Math. Notes},
	{2014},
		{95},
	{1},
	{3-14}
	


\bibitem{bch}
	 {Babenko, V F and Churilova, M S},
	title   = {On inequalities of {K}olmogorov type for derivatives of fractional order},
	{Bulletin of Dnepropetrovsk University. Mathematics},
	{2001},
	{6},
	   {16-20}
	

\bibitem{bch_2}
 {Babenko, V F and Churilova, M S},
	{On the {K}olmogorov type inequalities for fractional derivatives},
	{East Journal on Approximations},
	{2002},
 {8},
 {4},
 {437-446}

\bibitem{bch_3}
{Babenko, V F and Churilova, M S},
 {On {K}olmogorov type inequalities for fractional derivatives of the functions defined on the real line},
	{Bulletin of Dnepropetrovsk University. Mathematics},
	{2008},
{13},
 {28-34}

\bibitem{Bab_Chur_07}
 {Babenko, V F and Churilova, M S},
	{Kolmogorov type inequalities for hypersingular integrals with homogeneous characteristic},
	{Banach J. Math. Anal.},
	{2007},
 {1},
 {1},
{66-77}

\bibitem{Bab_Pich_10}
 {Babenko, V F and Pichugov, S A},
	{Sharp estimates of the norms of fractional derivatives of functions of several variables satisfying {H}\"older conditions},
	{Mathematical Notes},
	{2010},
 {February},
 {87},
 {1-2},
 {22-30}

\bibitem{BKP Palermo}
{Babenko, V F and Kofanov, V A and Pichugov, S A},
 {Inequalities of {K}olmogorov type and some their applications in approximation theory},
	{Rendiconti del Circolo Matematico di Palermo},
	{1998},
	{52},
 {223-237}

\bibitem{bustih}
 {Buslaev, A P and Tikhomirov, V M},
	{Inequalities for derivatives in the multidimensional case},
	{Math. Notes},
	{1979},
 {25},
 {1},
 {32-40}


\bibitem{Mag_Tih_81}
 {Magaril-Il'jaev, G G and Tihomirov, V M },
	{On the {K}olmogorov inequality for fractional derivatives on the half-line},
	{Analysis Mathematica},
	{1981},
 {7},
 {1},
 {37-47}

\bibitem{Gab_67}
	 {Gabushin, V N},
	title   = {Inequalities for the norms of a function and its derivatives in metric ${L}_p$},
	{Math. Notes},
	{1967},
	
	{1},
	{3},
	   {194-198}

\bibitem{Gab_70}
	 {Gabushin, V N},
	title   = {Best approximations of functionals on certain sets},
	{Math. Notes},
	{1970},
		{8},
	{5},
	   {780-785}
	

\bibitem{geisb}
	 {Geisberg, S P},
	title   = {Generalization of {H}adamard's inequality},
	{Sb. nauchn. tr. Leningr. meh. inst.},
	{1965},
		{50},
	{},
	   {42-54}
	
\bibitem{Ge68}
	 {Geisberg, S P},
	title   = {Fractional derivatives of functions bounded on the axis},
	{Izv. vuzov. Matematika},
	{1968},
		{11},
	   {51-69}
	

\bibitem{tmi}
	 {Magaril-Il'jaev, G G and Tihomirov, V M},
	title   = {On the {K}olmogorov inequality for fractional derivatives on the half-line},
	{Analysis Mathematica},
	{1981},
		{7},
	{1},
	   {37-47}
	
\bibitem{march}
	 {Marchaud, A},
	title   = {Sur les deriv$\acute{e}$es et sur les diff$\acute{e}$rences des fonctions de variables 	r$\acute{e}$elles},
	{J. Math. Pures et Appl.},
	{1927},
		{6},
	   {337-425}
	

\bibitem{Stein}
	 {Stein, E M},
	title   = {Functions of exponential type},
	{Ann. Math.},
	{1957},
		{65},
	{3},
	   {582-592},
	

\bibitem{Ste4}
	 {Stechkin, S B},
	 {Best approximation of linear operators},
	{Math. Notes},
	{1967},
		{1},
	{2},
	   {91-99}

\bibitem{Are_67}
	 {Arestov, V V},
	title   = {On the best approximation of differentiation operators},
	{Mathematical Notes},
	{1967},
{February},
 {1},
 {2},
	 {100-103}

\bibitem{Bus_81}
 {Buslaev, A P},
	{Approximation of a differentiation operator},
	{Mathematical Notes},
	{1981},
 {May},
 {29},
 {5},
	 {372-378}

\bibitem{Bus_Mag_Tih_82}
 {Buslaev, A P and Magaril-Il'yaev, G G and Tikhomirov, V M},
	{Existence of extremal functions in inequalities for derivatives},
	{Mathematical Notes},
	{1982},
 {December},
 {32},
 {6},
	 {898-904}

\bibitem{tai}
	 {Taikov, L V},
 {Improvement of an inequality of {H}ardy containing an estimate of the size of an intermediate derivative of a function},
	{Math. Notes},
	{1991},
	{50},
	{4},
	   {1062-1067},

\bibitem{Bab_Parf_09}
 {Babenko, V F and Parfinovych, N V},
	{On {K}olmogorov type inequalities for the {H}adamard fractional derivatives of functions defined on a semi-axis},
	{Bulletin of Dnepropetrovsk University. Mathematics},
	{2009},
	{17},
	{6},
	   {31-35},
 {(in Russian)}


\bibitem{blank}
	 {Arestov, V V}
	title   = {Approximation of unbounded operators by bounded operators and related extremal problems},
	{Russian Math. Surveys},
 {1996},
 {13},
 {266-267}

	






%


%
{arestov}
 {V V Arestov},
	{Inequalities for fractional derivatives on the half-line},
{Approximation theory},
	{Banach Center
	Publication},
	pages     = {19-34},
	 {1979},
{Warsaw},
 {4}
	



\bibitem{Kolm39}
	 {A N Kolmogorov},
	title     = {On inequalities between the upper bounds of the
	successive derivatives of an arbitrary function on the infinite interval},
	{Uch. Zap. MGU, Matematika},
	 {1939},
 {30},
  {1}
 {3-16}


\bibitem{Kolm_85}
	 {A N Kolmogorov},
	title     = {On inequalities between the upper bounds
	of the successive derivatives of an arbitrary function on an
	infinite interval},
	book{Selected works. Mathematics, Mechanics.},
	{Nauka},
 {252-263},
	 {1985},
 {Moscow}








%
\bibitem{BKKP-kn}
	 {V F Babenko and N P Korneichuk and V A Kofanov and S A Pichugov},
	title     = {Inequalities for derivatives and their applications},
	{Naukova dumka},
	 {2003},
 {Kiev}


\bibitem{Chui}
 {C K Chui},
	{An Introduction to Wavelets},
	{Academic Press},
	 {1992},
 {London}


\bibitem{KorLigBab}
 {N P Korneichuck and V F Babenko and A A Ligun},
	{Extremal Properties of Polynomials and Splines},
	{Nova Science Publishers},
	{1996},
{New York}



\bibitem{Kil}
	 {A A Kilbas and H M Srivastava and J J Trujillo},
	title     = {Theory and Applications of Fractional Differential Equations},
	{Elsevier},
	 {2006},
	address   = {Amsterdam}


\bibitem{Krasn}
	 {M A Krasnosel'skii and Ya B Rutitskii},
	title     = {Convex functions and Orlich spaces},
	{P. Noordhoff, Ltd},
	 {1961},
 {Groningen}


\bibitem{Krein}
	 {S G Krein and Ju I Petunin and E M Semenov},
	title     = {Interpolation of linear operators},
	{American Mathematical Society},
	 {1982},
 {Groningen}


\bibitem{LusSob}
 {L A Lusternik and V J Sobolev},
	{Elements of Functional Analysis},
	{Science Publishers Inc.},
	{1985},
{Glasgow}


\bibitem{Mitr}
 {D S Mitrinovic and J Pecaric and A M Fink},
	{Inequalities Involving Functions and Their Integrals and Derivatives},
	{Kluwer Academic Publishers},
	{1991},
{Dordrecht}


\bibitem{Sam}
 {S G Samko and A A Kilbas and O I Marichev},
	{Fractional Integrals and Derivatives: Theory and Applications},
	{Taylor \& Francis Books Ltd},
	{2002},
{London}


\bibitem{MBDK}
 {V P Motornyi and V F Babenko and A A Dovgoshei and O I Kuznetsova},
	{Approximation Theory and Harmonic Analysis},
	{Naukova Dumka},
	{2012},
{Kyiv}


%
\bibitem{blank}
	 {S G Krein and Ju I Petunin and E M Semenov},
	title     = {Interpolation of linear operators},
{Translations of Mathematical Monographs},
	{American Mathematical Society},
	 {1982},
		editor    {T J R Harris},

\bibitem{Kwong}
 {M K Kwong and A Zettl},

	{Norm Inequalities for Derivatives and Differences},
	{Lecture Notes in Mathematics},
	{Springer},
	{1992},
	 {1536},
{Berlin},



\end{thebibliography}

\newpage

\end{document}